\documentclass[10pt]{article}
\usepackage{hyphenat}

\usepackage{cancel} 
\usepackage{xfrac} 
\usepackage{float}
\usepackage{amsfonts,amssymb,amsmath,latexsym,amsthm,mathrsfs,cases} 
\usepackage{tikz}
\usetikzlibrary{arrows,automata,positioning}
\usepackage{graphicx}  
\usepackage[margin=1.05in]{geometry}
\usepackage{arydshln}
\usepackage{etoolbox}
\usepackage{color}
\usepackage{listings} 
\usepackage[final, nopatch=footnote]{microtype} 
\usepackage{enumerate}
\usepackage[shortlabels]{enumitem}
\usepackage{empheq} 
\usepackage{titlesec}
\usepackage{subcaption}
\usepackage{todonotes}
\usepackage{mathrsfs}
\usepackage{upgreek}
\usepackage{comment}
\usepackage{caption}
\usepackage{listings} 
\usepackage{xcolor} 
\usepackage{pdfpages} 
\usepackage[most]{tcolorbox} 
\usepackage{color, soul} 
\usepackage{tikz-cd} 
\usepackage{svg}
\usepackage[numbers]{natbib}

\newtcolorbox{myboxg}{
  breakable,
  colback=blue!5!white,
  colframe=blue!75!black
}

\usepackage[hidelinks]{hyperref}  
\definecolor{darkblue}{rgb}{0.0,0.0,0.3}
\hypersetup{colorlinks,breaklinks,
            linkcolor=blue,urlcolor=blue,
            anchorcolor=darkblue,citecolor=darkblue}

\titleformat{\section}[block]
  {\normalfont\large\bfseries}{Section \thesection:}{5pt}{}
\titleformat{name=\section,numberless}[block]
  {\normalfont\large\bfseries}{}{0pt}{}

\newcounter{homework}

\newcounter{exampractice} 

\theoremstyle{plain}
\newtheorem{theorem}{Theorem}[section]

\newtheorem{proposition}[theorem]{Proposition}
\newtheorem{corollary}[theorem]{Corollary}

\newtheorem{algorithm}[theorem]{Algorithm}

\newtheorem{notation}[theorem]{Notation}

\theoremstyle{definition}

\newtheorem{convention}[theorem]{Convention}

\newtheorem{definition}[theorem]{Definition}
\newtheorem{result}[theorem]{Result}

\theoremstyle{remark}
\newtheorem{remark}[theorem]{Remark}

\newtheorem{example}[theorem]{Example}

\newtheorem{note}{Note}

\newtheoremstyle{exercise-style}
  {\topsep}
  {\topsep}
  {}
  {}
  {\itshape}
  {}
  {.5em}
  {\thmname{#1}\thmnumber{ #2}\thmnote{ (#3)}.}
\theoremstyle{exercise-style}

\numberwithin{equation}{section}
\numberwithin{theorem}{section}

\newcommand{\BN}{\mathbb{N}}

\newcommand{\BR}{\mathbb{R}}

\title{A Stable Distance Persistence Homology for Dynamic Bayesian Network Clustering}

\author{Will Bales,  Carmen Rovi}

\newcommand{\Addresses}{{
\bigskip
\footnotesize

Will Bales,  \textsc{Department of Mathematics \& Statistics, Loyola University Chicago
} \par \nopagebreak
\textit{E-mail address}, W.~Bales:  \texttt{wbales@luc.edu}

\medskip

Carmen Rovi,  \textsc{Department of Mathematics \& Statistics, Loyola University Chicago 
} \par \nopagebreak
\textit{E-mail address}, C.~Rovi:  \texttt{crovi@luc.edu}

}}

\begin{document}

\maketitle

\begin{abstract}
Dynamic Bayesian networks (DBNs) are a widely used framework for modeling
systems whose probabilistic structure evolves over time. Standard inference
methods focus on local conditional distributions and can miss larger-scale
patterns in how dependencies between variables organize and change over time.
We introduce a topological approach to this problem. To each DBN we associate
a time-varying graph, called a Dynamic Bayesian Graph (DBG), by
assigning to each edge a strength that measures variation in its conditional
dependence across parent configurations, and retaining edges whose strength
exceeds a chosen threshold. We show that this construction fits within the
dynamic graph framework of Kim and M\'{e}moli \cite{KimMemoli2021}, enabling the use
of tools from topological data analysis.
Applying persistent homology to a DBG produces a barcode, which records the
merging and disappearance of connected groups of strongly dependent variables
over time. We prove that this barcode is stable: small perturbations in the
conditional probability tables of the DBN lead to small changes in the
resulting barcode. This yields a principled and noise-resistant summary of
how dependency structure evolves in a dynamic Bayesian network.
\end{abstract}

\setcounter{tocdepth}{2}
\tableofcontents
\setcounter{section}{0}    %

\section*{Introduction}

One of the great utilities of probabilistic graphical models is the ability
to take a seemingly complex system and capture its behavior using a small set
of probabilistic dependencies among random variables. Dynamic Bayesian Networks
(DBNs) do this across time: they model systems whose state evolves through
successive copies of a Bayesian network \cite{Koller-Friedman}, with conditional
probability tables (CPTs) that may vary from one time step to the next. They
have found applications across domains as varied as computational biology,
neuroscience, epidemiology, and engineering. A key structural feature of DBNs
is that their underlying directed graph is fixed (only the CPT values change) yet it is precisely these variations that give rise to evolving dependence
structures among the nodes. Understanding and tracking those structures is
central to the analysis of DBNs.

\subsubsection*{Structure on a DBN}

It is widely known that DBNs maintain a directed acyclic graph (DAG) as their
underlying structure. This DAG encodes a compact factorization of the joint
distribution and a precise conditional independence structure. While the
topology of the DAG does not evolve across time, the CPT of each node varies
between time slices, and these variations carry meaningful information about
shifting probabilistic dependencies. To extract this information
geometrically, we introduce a time-indexed edge strength function
$\delta^{(t)}_{YX}$ defined on the directed edges of the DBN. By assigning
each edge a real-valued strength at each time, we can impose geometric
structure on the DBN and bring to bear tools from topological data analysis
(TDA). Specifically, we use a method called Stable Distance Persistent
Homology (SDPH) to extract time-evolving clustering information from a DBN in
a provably stable manner: small perturbations in the CPT values
produce correspondingly small changes in the resulting clustering signatures, making these clustering signatures resistant to noise. 

\subsubsection*{Clustering on a DBN}

A fundamental inference task on a DBN is \emph{filtering}: for each time $t$,
computing the belief state
$\sigma^{(t)} = P(\mathcal{X}^{(t)} \mid \mathbf{E}^{(0:t)})$,
the distribution over the current system state given all evidence observed up
to time $t$. The belief state grows exponentially in $|\mathcal{X}|$, making
exact computation intractable in general. Algorithms for Bayesian network
clustering \cite{wu2025optimal, Albrecht-Ramamoorthy} address this by grouping nodes into clusters
whose local distributions can be computed separately, reducing the effective
problem size. However, most of these methods are heuristics that do not
leverage the inherent topological structure of the underlying DAG, and they
provide no principled framework for tracking how clusters evolve across time
or for comparing the cluster structure of different networks.

Our approach remedies this. We construct a formal framework using SDPH in
order to identify and track key clustering structures in a DBN as they change
across time. The framework is grounded in the TDA machinery of Kim and
M\'{e}moli \cite{KimMemoli2021}, which introduces \emph{dynamic graphs} and
\emph{formigrams} for tracking how connected components (read here as
clusters) are born, merge, and disband as a graph evolves continuously in
time. Applying zigzag persistence to a formigram produces a \emph{barcode},
a multiset of intervals whose lengths record the persistence of individual
clustering events. The SDPH approach is particularly well-suited to DBNs
because it reads the evolving conditional independence structure encoded in the
DAG, rather than treating the network as raw data to be clustered by external
criteria.

\subsubsection*{Main Results}

The central obstacle in applying the framework of \cite{KimMemoli2021} to DBNs is
structural: DBNs carry directed edges and operate in discrete time, while the
dynamic graphs of \cite{KimMemoli2021} are undirected, continuous-time objects required
to satisfy four axioms: self-loops, tameness, interval lifespan, and, most
delicately, comparability. Our primary contribution is the construction of a
\emph{Dynamic Bayesian Graph} (DBG), a dynamic graph derived canonically from
a DBN that resolves this mismatch (Section \ref{Sec:stable-distance}).

The construction rests on the edge strength function $\delta_{ji}$ of Leonelli
and Smith \cite{diameter-LeonelliSmith2025}, defined as the maximum upper diameter of the CPT of a
child node over configurations of its remaining parents
(Definition~\ref{Def:edge-strength}). This quantity has a precise conditional
independence interpretation: $\delta_{ji} = 0$ if and only if
$X_i \perp X_j \mid X_{\Pi_i \setminus j}$
(Remark~\ref{Rmk:indepence-interpretation}), giving the threshold parameter $\eta$ a natural
meaning as a cut-off for non-negligible conditional influence. Symmetrizing
the directed edge set and retaining only edges whose strength exceeds $\eta$
yields a time-varying undirected edge set $E_\eta(\cdot)$. The key technical
step is the \emph{maximum convention} at critical times: at each transition
moment $t^* = k\Delta t$ between time slices, the edge set is assigned the
union of its left and right limits. We prove that this is the unique choice
consistent with the comparability axiom of \cite[Definition~2.3(iv)]{KimMemoli2021},
and that the resulting object
$\mathscr{G}_{\mathcal{X}} = (\mathcal{X}_{\rightarrow}^{(\cdot)}, E_\eta(\cdot))$
is a well-defined dynamic graph (Proposition~\ref{Prop:dynamic-Bayesian-dynamic-graph}). Every
DBG is moreover a \emph{saturated} dynamic graph: its vertex set
$\mathcal{X}$ is constant in time, a structural feature that simplifies the
subsequent analysis.

With the DBG established, the machinery of \cite{KimMemoli2021} applies directly.
We define the formigram $\pi_0(\mathscr{G}_{\mathcal{X}})$ via the path
component functor, the barcode $\mathrm{dgm}(\mathscr{G}_{\mathcal{X}})$
via the map extension $\Psi_K$, and the $\varepsilon$-smoothing
$S_\varepsilon\mathscr{G}_{\mathcal{X}}$ (Section \ref{Prop:dynamic-Bayesian-dynamic-graph}). Our stability result
(Proposition~\ref{Prop:distance}) states that
\[
  d_B\!\bigl(\mathrm{dgm}(S_\varepsilon\pi_0(\mathscr{G}_{\mathcal{X}})),\,
             \mathrm{dgm}(\pi_0(\mathscr{G}_{\mathcal{X}}))\bigr)
  \;\leq\; \varepsilon,
\]
so that smoothing a DBG by $\varepsilon$ changes its barcode by at most
$\varepsilon$ in the bottleneck distance. We also establish that the
interleaving distance $d_1^{\mathrm{dynG}}$ defines an extended
pseudo-metric on DBGs (Theorem~\ref{Thm:pseudometric}), enabling principled
comparison between networks.

Finally, the time-varying path component clustering map and the time-varying
clustering of a DBN (Section~\ref{Sec:stable-distance}) translate the topological output back into
the language of DBNs: each bar in the barcode records a group of random
variables that remain jointly linked throughout the bar's lifespan, with
merging events corresponding to clusters becoming probabilistically linked and
disbanding events corresponding to the breaking of such links.

\smallskip
\subsubsection*{Organization.}
Section~1 summarizes the relevant background from \cite{KimMemoli2021}: dynamic
graphs, formigrams, zigzag persistence, and stability. Section~2 recalls
the standard definitions of Bayesian and dynamic Bayesian networks following
\cite{Koller-Friedman}. Section~3 introduces the edge strength function following
\cite{diameter-LeonelliSmith2025}. Section~4 presents the DBG construction and the main results.
Sections~5--7 place the construction in the context of existing BN clustering
methods, develop the clustering interpretation, and discuss the design
flexibility of the edge strength function.

\section{Stable Distance Persistent Homology}\label{Sec:Kim-Memoli-summary}
The following section summarizes the framework developed by Kim and M\'{e}moli~\cite{KimMemoli2021};
All definitions and results in this section are drawn from that reference unless otherwise noted.

\subsection{Introducing SDPH}

Stable Distance Persistent Homology is a method in TDA that applies zigzag persistent homology to extract topological signatures from a dynamic graph $\mathscr{G}_X$. Concretely, $\mathscr{G}_X$ is encoded as a sequence of zigzag simplicial filtrations, each of whose constituent simplicial complexes is 1-dimensional. Since a 1-dimensional simplicial complex is simply a graph, its only non-trivial homology groups are $H_0$ (connected components) and $H_1$ (independent cycles); for the purpose of clustering, the relevant invariant is $H_0$. Accordingly, this paper focuses on the 0-th homology barcode as a stable invariant of $\mathscr{G}_X$.

The key result from \cite{KimMemoli2021} is that these barcodes are stable invariants. This is shown using what is called the bottleneck distance and the interleaving distance. The bottleneck distance is used specifically for barcodes, whereas the interleaving distance tells us the distance between dynamic graphs (DGs) $\mathscr{G}_X$. Their key result can be summed up in the following theorem.

\begin{theorem} \cite[Thm 1.1]{KimMemoli2021}
    If we consider $\textnormal{dgm}_0(\mathscr{G}_X)$ and $\textnormal{dgm}_0(\mathscr{G}_Y)$ to be the clustering barcodes for the dynamic graphs $\mathscr{G}_X$ and $\mathscr{G}_Y$ over sets $X$ and $Y$ that are finite and non-empty, then we know that
    \begin{align}
        d_B(\textnormal{dgm}_0(\mathscr{G}_X), \textnormal{dgm}_0(\mathscr{G}_Y)) \leq 2 d_1^{\textnormal{dynG}}(\mathscr{G}_X, \mathscr{G}_Y),
    \end{align}
    where $d_1^{\textnormal{dynG}}(\mathscr{G}_X, \mathscr{G}_Y)$ is the interleaving distance between $\mathscr{G}_X$ and $\mathscr{G}_Y$.
\end{theorem}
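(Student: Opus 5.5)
The inequality will follow by composing three stability statements from \cite{KimMemoli2021}: one moving from dynamic graphs to formigrams, one moving from formigrams to zigzag modules of sets and vector spaces, and the algebraic stability of zigzag barcodes. Throughout, we use the reference's notion of $\varepsilon$-interleaving for dynamic graphs. For each $t$, $\mathscr{G}_X$ and $\mathscr{G}_Y$ must be comparable, via a tripod or correspondence $X \leftarrow Z \rightarrow Y$, with the $\varepsilon$-smoothings $S_\varepsilon\mathscr{G}_Y$ and $S_\varepsilon\mathscr{G}_X$ respectively. The smoothing $S_\varepsilon$ unions the graphs over the window $[t-\varepsilon,t+\varepsilon]$. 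The distance $d_1^{\mathrm{dynG}}$ is the infimum of such $\varepsilon$.

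\textbf{Step 1 (graphs to formigrams).} Applying the path component functor $\pi_0$ takes $\mathscr{G}_X,\mathscr{G}_Y$ to formigrams $\pi_0(\mathscr{G}_X),\pi_0(\mathscr{G}_Y)$. We check that $\pi_0$ commutes with smoothing up to refinement: $\pi_0(S_\varepsilon \mathscr{G})$ coincides with $S_\varepsilon\pi_0(\mathscr{G})$, where smoothing a formigram takes the join of the partitions over the window. Any subgraph inclusion along the tripod induces a partition refinement. Hence an $\varepsilon$-interleaving of dynamic graphs gives an $\varepsilon$-interleaving of formigrams, so that
\[
d_I^{\mathrm{F}}(\pi_0\mathscr{G}_X,\pi_0\mathscr{G}_Y)\le d_1^{\mathrm{dynG}}(\mathscr{G}_X,\mathscr{G}_Y).
\]

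\textbf{Step 2 (formigrams to zigzag modules).} The barcode $\mathrm{dgm}_0$ is obtained by sampling the formigram at critical times, which gives a zigzag diagram of sets. We then apply the free vector space functor and extend to an $\mathbb{R}$-indexed (or $\mathbb{ZZ}$-indexed) module, using the map extension $\Psi_K$ to pass to a module over the poset $\mathbb{U}=\{(a,b):a\le b\}$ of intervals. We show that a formigram $\varepsilon$-interleaving yields a $2\varepsilon$-interleaving of the associated block-decomposable modules over $\mathbb{U}$. The factor $2$ enters here: smoothing by $\varepsilon$ enlarges an interval $[a,b]$ to $[a-\varepsilon,b+\varepsilon]$. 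A shift by $\varepsilon$ in each coordinate of $\mathbb{U}$ then translates into the interleaving parameter on the zigzag side via the standard comparison between the ``$d_I$ for formigrams'' and ``$d_I$ for $\mathbb{U}$-modules'' conventions.

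\textbf{Step 3 (algebraic stability).} We then invoke algebraic stability for block-decomposable (zigzag) modules, due to Botnan--Lesnick and Bjerkevik. It states that the bottleneck distance between barcodes is bounded by the interleaving distance of the modules, with the constant $1$ in Bjerkevik's form. Combining Steps 1--3 gives
\[
d_B(\mathrm{dgm}_0(\mathscr{G}_X),\mathrm{dgm}_0(\mathscr{G}_Y))\le 2\,d_1^{\mathrm{dynG}}(\mathscr{G}_X,\mathscr{G}_Y).
\]

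The main obstacle is Step 2. Correspondences between different vertex sets $X$ and $Y$ do not directly give morphisms of modules. We must show that the tripod maps induce well-defined set maps on the blocks of the partitions, which requires that the blocks be compatible with the refinement relation, and that these maps commute with all zigzag structure maps. I would first reduce to formigrams over a common set by pulling both back along the tripod $Z$. Pulling back preserves barcodes, since it only adds duplicate points to existing blocks. With both formigrams over $Z$, the interleaving maps are induced by partition coarsenings, and the required commutativity becomes automatic.
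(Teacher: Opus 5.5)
Your overall route is the right one, and it is the route of Kim--M\'{e}moli \cite{KimMemoli2021}; the paper gives no proof of its own and only cites it. First, $\pi_0$ is $1$-Lipschitz from $d_1^{\mathrm{dynG}}$ to $d_1^F$. Then formigram barcodes are shown stable via Botnan--Lesnick and Bjerkevik. Your Step 1 is fine: $\pi_0 S_\varepsilon = S_\varepsilon \pi_0$ holds, and pulling back to $Z$ works because pullback along a surjection commutes with $\bigvee$.

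The problem is Steps 2--3. The factor $2$ is placed where it does not arise, and the step where it actually arises is missing. Define $\mathbf{L}(\theta)_{(a,b)} := \mathfrak{V}_{\mathbb{F}}\bigl(\bigvee_{[a,b]}\theta\bigr)$ on $\mathbb{U}$; this is the relevant $\mathbb{U}$-module, whereas $\Psi_K$ only turns $K$-intervals into real intervals and produces no module. Then $\mathbf{L}(S_\varepsilon\theta)_{(a,b)} = \mathbf{L}(\theta)_{(a-\varepsilon,\,b+\varepsilon)}$, which is exactly the $\varepsilon$-shift on $\mathbb{R}^{\mathrm{op}}\times\mathbb{R}$. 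So your own justification (smoothing enlarges $[a,b]$ to $[a-\varepsilon,b+\varepsilon]$) gives an $\varepsilon$-interleaving of the modules, not a $2\varepsilon$ one. The appeal to a ``standard comparison between conventions'' does no work.

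Bjerkevik's theorem bounds the bottleneck distance between \emph{block} barcodes: matched blocks must be $\delta$-interleaved and unmatched blocks $2\delta$-trivial. It does not directly bound $d_B$ between the real-interval barcodes $\mathrm{dgm}(\theta)=\Psi_K\circ\mathrm{dgm}_K$. To close the argument you need two further facts.
\begin{enumerate}[(a)]
\item $\mathbf{L}(\theta)$ is block decomposable, with blocks in type-preserving bijection with the intervals of $\mathrm{dgm}(\theta)$, for instance by identifying $\mathbf{L}(\theta)$ with the Botnan--Lesnick extension of $V_\theta$. Closed $[c,d]$ corresponds to the upset $\{a\le d,\ b\ge c\}$. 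Half-open bars correspond to one-coordinate strips such as $\{c\le b<d\}$. Open $(c,d)$ corresponds to the bounded block $\{c<a\le b<d\}$.
\item A comparison of the two bottleneck distances, and this is where the $2$ lives. The bounded block of an open bar $(c,d)$ is $2\delta$-trivial once $\delta\ge (d-c)/4$, while $d_B$ charges $(d-c)/2$ for leaving $(c,d)$ unmatched. Strips and upsets cost the same in both settings, and non-trivial matched blocks have endpoints within $\delta$. Hence only $d_B\le 2\,d_B^{\mathrm{block}}$ holds.
\end{enumerate}

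Your Step 3 as stated, with constant $1$ from module interleaving to $d_B$ of the real barcodes, is false. Take $X=\{x,y\}$, let $\theta$ be merged off $(0,1)$ and split on $(0,1)$, and let $\theta'$ be always merged. Then $\mathbf{L}(\theta)\cong I^{\mathbb{U}}\oplus I^{B}$ with $B=\{0<a\le b<1\}$, and $\mathbf{L}(\theta')\cong I^{\mathbb{U}}$. These modules are $\tfrac14$-interleaved, but $d_B(\mathrm{dgm}(\theta),\mathrm{dgm}(\theta'))=\tfrac12$.

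Your final inequality therefore comes out right only because two misstatements cancel. The valid chain is $d_B \le 2\,d_B^{\mathrm{block}} \le 2\,d_I(\mathbf{L}\pi_0\mathscr{G}_X,\mathbf{L}\pi_0\mathscr{G}_Y) \le 2\,d_1^F(\pi_0\mathscr{G}_X,\pi_0\mathscr{G}_Y) \le 2\,d_1^{\mathrm{dynG}}(\mathscr{G}_X,\mathscr{G}_Y)$. Its first step, item (b) above, is exactly what your proposal omits.
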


Furthermore, \cite{KimMemoli2021} creates the notion of a \textit{formigram} which describes the clustering of a DG. This formigram is used to obtain a DG's persistence barcode with a robust algebraic interpretation. 

While there are many different ways to utilize this method for zigzag persistent homology, there are several different ways that this algorithm becomes key to the analysis of DBNs.

\begin{enumerate}[(i)]
    \item The time-varying nature of the zigzag persistence diagram for a DG reflects the progression of its clustering features across time.
    \item Formigrams effectively represent birth and death events in a DG. For this reason, the clustering features represented by a formigram can express points with infinite lifespan and finite lifespan.
    \item A notion of stability is created between the clustering barcodes of DGs and the actual distance between DGs. This makes their results especially desirable for the purposes of data analysis.
    \item Clustering is dependent on a self-prescribed edge strength function which can add richness and variety to the overall interpretation of the clustering barcodes.
\end{enumerate}

Given a dynamic graph, it takes three steps to fully obtain a persistence barcode along with stability results. Step 1 consists of lifting a dynamic graph into a formigram. Step 2 consists of carefully representing a formigram as a zigzag persistence barcode. And step 3 consists of demonstrating the stability of our zigzag persistence barcode.

\subsection{Step 1: Dynamic Graphs to Formigrams}

\subsubsection{Dynamic Graphs}

\begin{definition}\label{Def:Dynamic-graph} \cite[Def 4.1]{KimMemoli2021}(Dynamic Graph)
    Consider the following functions
    \begin{align}
        V_X(\cdot): \textbf{R} \to \mathcal{P}(X) \quad \textnormal{and} \quad E_X(\cdot): \textbf{R} \to \mathcal{P}(\mathcal{P}_2(V_X(\cdot))).
    \end{align}
    A \textbf{dynamic graph} $\mathscr{G}_X$ over $X$ is defined as the pair of maps $(V_X(\cdot),E_X(\cdot))$, satisfying the conditions below:
    \begin{enumerate}[(i)]
        \item (Self-loops) We say if $x \in V_X(t), $ for $t \in \mathbb{R}$, then $(x,x) \in E_X(t)$.
        \item (Tameness) We say $\textnormal{crit}(\mathscr{G}_\mathcal{X})$ is locally finite.
        \item (nodes' Lifespan) We say that for a node $x \in X$, the following set is non-empty, closed interval $I_x \coloneq \{ t \in \mathbb{R} | x \in V_X(t) \}$. This set is called the $\textbf{lifespan}$  of $x$.
        \item (Comparability) We say that the following inclusions hold
        \begin{align}
            V_X(t - \varepsilon) \subset V_X(t) \supset V_X(t + \varepsilon) \quad \textnormal{and} \quad E_X(t- \varepsilon) \subset E_X(t) \supset E_X(t+ \varepsilon)
        \end{align}
        for $t \in \mathbb{R}$ and $\varepsilon>0$ that is small enough to allow the condition to hold. This is rewritten as $\mathscr{G}_X(t- \varepsilon) \subset \mathscr{G}(t) \supset \mathscr{G}_X(t + \varepsilon)$ for brevity.
    \end{enumerate}
\end{definition}

We can observe that the comparability conditions force $\mathscr{G}(c)$ to be locally maximum. In other words, at each critical point $c\in\mathrm{crit}(\mathscr{G}_X)$, at least one of the inclusions $\mathscr{G}_X(c-\varepsilon)\subset\mathscr{G}_X(c)\supset\mathscr{G}_X(c+\varepsilon)$ is strict.

Furthermore, we can also define the notion of an isomorphism between Dynamic graphs.

\begin{definition}[Isomorphisms between DGs]
    If we consider two DGs $\mathscr{G}_X = (V_X(\cdot),E_X(\cdot))$ and $\mathscr{G}_Y = (V_Y(\cdot), E_Y(\cdot))$, we define an isomorphism between these two graphs as the bijection $\varphi : X \to Y$ where for $t \in \mathbb{R}$
    \begin{enumerate}[(i)]
        \item we have $V_X(t) = \varphi(V_Y(t))$,
        \item and  $(x,x') \in E_X(t) \iff (\varphi(x), \varphi(x)') \in E_Y(t).$
    \end{enumerate}
\end{definition}

\subsubsection{Formigrams}

Formigrams live in the category $\textbf{Sets}$. They are in fact  collections of sub-partitions of our set $X$ that have the potential to be realized through time. First we will define what a collection sub-partition is.

\begin{definition}[Collection of Sub-partitions]
    If we consider some non-empty and finite set $X$, we can define the \textbf{collection of sub-partitions of} $X$:
    \begin{enumerate}[(i)]
        \item The set
        \begin{align}
            \mathcal{P}^{\textnormal{sub}}(X) \coloneq \{ P ~|~ \exists X' \subset X, P \textnormal{ is a partition of } X'\}
        \end{align}
        is called the collection of all sub-partitions of $X$.
        \item We call the set $\mathcal{P}(X)$ the sub-collection $\mathcal{P}^{\textnormal{sub}}(X)$ that contains only partitions of the entire set $X$.
    \end{enumerate}
\end{definition}

\begin{definition}\cite[Def 5.4]{KimMemoli2021}\label{Def:formigram}(Formigram)
    We can define the \textbf{formigram} over a set $X$ as the function $\theta_X : \mathbb{R} \to \mathcal{P}^{\textnormal{sub}}(X)$ where we have a tameness, interval lifespan, and comparability condition:
    \begin{enumerate}[(i)]
        \item (Tameness) If we let $\textnormal{crit}(\theta_X)$ represent the set of critical points for the function $\theta_X$ where this set is locally finite, we satisfy the tameness condition.
        \item (Interval Lifespan) Similar to a DG, we say that for an element $x \in X$, the following set is non-empty and closed $I_X \coloneq \{ t \in \mathbb{R} | x \in B \in \theta_X(t) \}$. This set is called the $\textbf{lifespan}$  of $x$.
        \item (Comparability) The condition $\theta_X(c - \varepsilon) \leq \theta_X(c) \geq \theta_X(c+ \varepsilon)$ for $c \in \mathbb{R}$ and small $\varepsilon>0$. This means $\theta_X(c)$ is locally maximally coarse.
    \end{enumerate}
\end{definition}

Formigrams allow for the analysis of merging, disbanding, birth, and death structures in a given time-evolving subpartition.

\begin{definition}[The Merging, Disbanding, Birth, and Death Events of A Formigram]
    If we have a formigram $\theta_X$ over a set $X$, and we let $t_0 \in \mathbb{R}$ be in $\textnormal{crit}(\theta_X)$ with $\varepsilon > 0$ and $\{ t_0 \} = \{ t_0 - \varepsilon, t_0 + \varepsilon \} \cap \textnormal{crit}(\theta_X)$ for sufficiently small $\varepsilon$, then we can say the following:
    \begin{enumerate}[(i)]
        \item A \textit{merging event} happens at $t_0 \in \mathbb{R}$ if for two non-intersecting blocks $A,B \in \theta_X(t_0 - \varepsilon)$, we have $C \in \theta_X(t_0)$ with $A \cup B \subset C$.
        \item A \textit{disbanding event} happens at $t_0 \in \mathbb{R}$ if for two non-intersecting blocks $A,B \in \theta_X(t_0 + \varepsilon)$ we have $C \in \theta_X(t_0)$ with $A \cup B \subset C$.
        \item A \textit{birth event} occurs at  time $t_0 \in \mathbb{R}$ if we have $x \in X$ that exists in one of the sub-partitions of $\theta_X(t_0)$, but is not in one of those sub-partitions in $\theta_X(t_0 - \varepsilon)$.
        \item A \textit{death event} occurs at time $t_0 \in \mathbb{R}$ if we have an $x \in X$ that exists in one of the sub-partitions of $\theta_X(t_0)$, but is no longer one of those sub-partitions in $\theta_X(t_0 + \varepsilon)$.
    \end{enumerate}
\end{definition}

\subsubsection{Dynamic Graphs to Formigrams}

Before DGs can be lifted into the world of formigrams, we must understand what important information will be represented of our graph in the category $\textbf{Sets}$. For this reason \cite{KimMemoli2021} defines the \textit{path component functor} $\pi_0$.

\begin{definition}
   [Path Component Functor] The path component functor is the functor $\pi_0 : \mathbf{Graph} \to \mathbf{Sets}$ defined as follows:
\begin{enumerate}[(i)]
    \item 
\textbf{On objects:} For a graph $G_X = (X, E_X)$, declare $x \sim x'$ if there exists a finite sequence $x = x_1, x_2, \ldots, x_n = x'$ in $X$ such that $(x_i, x_{i+1}) \in E_X$ for each $i = 1, \ldots, n-1$. The relation $\sim$ is an equivalence relation on $X$, and we define
$$\pi_0(G_X) \coloneqq X/{\sim},$$
to be the partition of $X$ into path-connected components.

\item \textbf{On morphisms:} A graph morphism $f : G_X \to G_Y$ induces a map $\pi_0(f) : \pi_0(G_X) \to \pi_0(G_Y)$ that sends each block $B \in \pi_0(G_X)$ to the unique block $C \in \pi_0(G_Y)$ satisfying $f(B) \subseteq C$. 
\end{enumerate}
\end{definition}

With this path component functor, we are able to lift a DG into a formigram.

\begin{definition} \label{Def:formigram-of-DG}
[The Formigram of a DG] Given a DG $\mathscr{G}_X = (V_X(\cdot), E_X(\cdot))$, define the function $\pi_0(\mathscr{G}_X) : \mathbb{R} \to \mathcal{P}^{\mathrm{sub}}(X)$
by
\[\pi_0(\mathscr{G}_X)(t) := \pi_0(\mathscr{G}_X(t)), \qquad \text{for } t \in \mathbb{R}.\]

\end{definition}

\begin{proposition}\label{Prop:DG-to-formigram}
The function $\pi_0(\mathscr{G}_X)$ from Definition \ref{Def:formigram-of-DG} is a formigram. In particular, every DG gives rise to a formigram via the path component functor.
\end{proposition}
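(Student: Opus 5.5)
The plan is to verify the three axioms of Definition \ref{Def:formigram} for $\theta_X := \pi_0(\mathscr{G}_X)$ one at a time, each time reducing to the corresponding axiom of Definition \ref{Def:Dynamic-graph}. First, $\theta_X$ is well defined. For each $t$, $\mathscr{G}_X(t) = (V_X(t),E_X(t))$ is a graph on the finite vertex set $V_X(t)\subset X$. Hence $\pi_0(\mathscr{G}_X(t))$ is a partition of $V_X(t)$, which is an element of $\mathcal{P}^{\mathrm{sub}}(X)$; when $V_X(t)=\emptyset$ it is the empty partition.

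\emph{Interval lifespan.} Since the blocks of $\theta_X(t)$ partition $V_X(t)$, an element $x$ lies in some block of $\theta_X(t)$ if and only if $x\in V_X(t)$. So the lifespan of $x$ in $\theta_X$ equals its lifespan $I_x$ in $\mathscr{G}_X$, and $I_x$ is a non-empty closed interval by Definition \ref{Def:Dynamic-graph}(iii).

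\emph{Comparability.} Fix $c\in\mathbb{R}$ and choose $\varepsilon>0$ small enough that Definition \ref{Def:Dynamic-graph}(iv) gives $\mathscr{G}_X(c\pm\varepsilon)\subset\mathscr{G}_X(c)$. These inclusions are graph morphisms, and functoriality of $\pi_0$ yields maps $\pi_0(\mathscr{G}_X(c\pm\varepsilon))\to\pi_0(\mathscr{G}_X(c))$ sending each block $B$ to the unique block $C$ with $B\subseteq C$. The point to check is that any path in $\mathscr{G}_X(c\pm\varepsilon)$ is still a path in $\mathscr{G}_X(c)$, because both vertices and edges are included. Thus every block at $c\pm\varepsilon$ is contained in a block at $c$, which is exactly the order $\theta_X(c\pm\varepsilon)\le\theta_X(c)$ on sub-partitions: $P\le Q$ when the underlying set of $P$ is contained in that of $Q$ and every block of $P$ lies in a block of $Q$.

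\emph{Tameness.} This is the step that needs the most care, since one must relate $\mathrm{crit}(\theta_X)$ to $\mathrm{crit}(\mathscr{G}_X)$. I would argue that on any open interval $J$ containing no point of $\mathrm{crit}(\mathscr{G}_X)$, the map $t\mapsto\mathscr{G}_X(t)$ is constant. Consequently $t\mapsto\pi_0(\mathscr{G}_X(t))$ is also constant on $J$, being a function of the graph alone. Every critical point of $\theta_X$ (a point where it fails to be locally constant) is therefore a critical point of $\mathscr{G}_X$, so $\mathrm{crit}(\theta_X)\subseteq\mathrm{crit}(\mathscr{G}_X)$. A subset of a locally finite set is locally finite, and Definition \ref{Def:Dynamic-graph}(ii) then gives tameness of $\theta_X$. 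The inclusion may be strict, for instance when an edge appears inside an already connected component, but that causes no problem. The self-loop axiom is not needed except to guarantee that each vertex of $V_X(t)$ lies in its own component.

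Combining the three verifications, $\pi_0(\mathscr{G}_X)$ is a formigram.
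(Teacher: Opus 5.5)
Your proposal is correct and takes essentially the same route as the paper. Both verify the three formigram axioms by reducing each to the corresponding dynamic-graph axiom: $\mathrm{crit}(\pi_0(\mathscr{G}_X))\subseteq\mathrm{crit}(\mathscr{G}_X)$ for tameness, $x$ lying in a block of $\pi_0(\mathscr{G}_X(t))$ iff $x\in V_X(t)$ for the lifespan, and paths in $\mathscr{G}_X(c\pm\varepsilon)$ remaining paths in $\mathscr{G}_X(c)$ for comparability. Your additions (the well-definedness check and the local-constancy argument for the inclusion of critical sets) make explicit what the paper states in one line.
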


\begin{proof}
We verify the three conditions of Definition \ref{Def:formigram}.

\begin{enumerate}[(i)]
    \item \textbf{Tameness:} The partition $\pi_0(\mathscr{G}_X(t))$ can only change at $t$ if the graph $\mathscr{G}_X(t)$ itself changes, so $\mathrm{crit}(\pi_0(\mathscr{G}_X)) \subseteq \mathrm{crit}(\mathscr{G}_X)$. Since $\mathrm{crit}(\mathscr{G}_X)$ is locally finite by DG condition (ii) in Definition \ref{Def:Dynamic-graph}, so is $\mathrm{crit}(\pi_0(\mathscr{G}_X))$.

\item \textbf{ Interval Lifespan:} For $x \in X$, we have $x \in B$ for some $B \in \pi_0(\mathscr{G}_X(t))$ if and only if $x \in V_X(t)$, since the path component functor partitions exactly the vertices present at time $t$. Therefore the lifespan of $x$ in $\pi_0(\mathscr{G}_X)$ is ${t \in \mathbb{R} \mid x \in V_X(t)} = I_x$, which equals the lifespan of $x$ in the DG. This is non-empty by the DG condition in Definition \ref{Def:Dynamic-graph} (iii).

\item \textbf{Comparability.} Fix $c \in \mathbb{R}$. By DG condition (iv), for small enough $\varepsilon > 0$,
$$V_X(c \pm \varepsilon) \subset V_X(c) \quad \text{and} \quad E_X(c \pm \varepsilon) \subset E_X(c).$$
Thus $\mathscr{G}_X(c \pm \varepsilon)$ is a subgraph of $\mathscr{G}_X(c)$. Any path connecting $x$ to $y$ in $\mathscr{G}_X(c \pm \varepsilon)$ is also a path in $\mathscr{G}_X(c)$, so every block of $\pi_0(\mathscr{G}_X(c \pm \varepsilon))$ is contained in a single block of $\pi_0(\mathscr{G}_X(c))$. Hence $\pi_0(\mathscr{G}_X(c \pm \varepsilon)) \leq \pi_0(\mathscr{G}_X(c))$, as required.    \raggedbottom
\end{enumerate}
\end{proof}

\subsection{Step 2: Zigzag Modules to Barcodes}\label{Sec:Zigzag}

\subsubsection{Important Posets}

\begin{definition}[The Poset $\mathbb{R}^{\mathrm{op}} \times \mathbb{R}$]
Let $\mathbb{R}^{\mathrm{op}}$ denote the real numbers equipped with
the opposite order: for $a, b \in \mathbb{R}$,
\[
  a \leq_{\mathrm{op}} b \quad\Longleftrightarrow\quad a \geq b.
\]
The product poset $\mathbb{R}^{\mathrm{op}} \times \mathbb{R}$ is
then ordered componentwise: for
$(a_1, a_2), (b_1, b_2) \in \mathbb{R}^{\mathrm{op}} \times \mathbb{R}$,
\[
  (a_1, a_2) \leq (b_1, b_2)
  \quad\Longleftrightarrow\quad
  a_1 \geq b_1 \;\text{ and }\; a_2 \leq b_2.
\]
\end{definition}

\begin{definition}
    Furthermore, we will define the poset $\mathbb{ZZ}$ as a sub-poset of $\mathbb{R}^{\textnormal{op}} \times \mathbb{R}$ defined by $\mathbb{ZZ} \coloneq \{ (k,l): k \in \mathbb{Z}, l \in \{ k, k-1 \} \}$.
\end{definition}

\subsubsection{Creating and Decomposing Zigzag Modules}

\begin{definition}[Internal Maps]
    Let us consider two different categories. We will have the first category be a poset, $\textbf{P}$, and the second category be an arbitrary category $\textbf{C}$. We will take notice of the functor between these categories, $F: \textbf{P} \to \textbf{C}$. If we have the elements $s, t \in \textbf{P}$, where $F_s \coloneq F(s)$, we can define the morphism in $\textbf{C}$, $F(s \leq t)$, as $\varphi_F(s,t): F_s \mapsto F_t$. For any morphism $s \leq t$, the morphism $\varphi_F(s,t): F_s \to F_t$ is called the \textbf{internal map} of $F$.
\end{definition}

\begin{definition}[$\textbf{P}-$Indexed Module]
    If we take a functor $F: \textbf{P} \to \textbf{Vec}$, where the category $\textbf{Vec}$ contains finite-dimensional vector spaces over $\mathbb{F}$ with linear maps as morphisms. 
\end{definition}

\begin{definition}[Intervals on a Poset]
    The interval $\mathscr{J}$ on a poset $\textbf{P}$ is a subset $\mathscr{J} \subset \textbf{P}$ with the following properties:
    \begin{enumerate}[(i)]
        \item $\mathscr{J} \neq \emptyset$,
        \item if $r,t \in \mathscr{J}$, $s \in \textbf{P}$, and $r \leq s \leq t$, then this implies that $s \in \mathscr{J}$,
        \item if we take $s,t \in \mathscr{J}$, we know there is a sequence $s = u_0, u_1, \cdots , u_{l-1}, u_l = t$ for which 
        $u_i$ and $u_{i+1}$ are comparable in $P$ for each $0 \leq i \leq l-1$. 
        
    \end{enumerate}
\end{definition}

\begin{definition}[Interleval Module]
    We define the \textbf{interleval module}, $I^{\mathscr{J}}: \textbf{P} \to \textbf{Vec}$, for any interval $\mathscr{J}\subset \textbf{P}$ using the following conditions
    \begin{align}
        I_t^{\mathscr{J}} = \left\{ \begin{array}{cc}
           \mathbb{F}  & \textnormal{ if } t \in \mathscr{J} \\
            0 & \textnormal{otherwise}
        \end{array} \right.
        \quad
        \varphi_{I^{\mathscr{J}}}(s,t) = \left \{ \begin{array}{cc}
           \textnormal{id}_\mathbb{F}  & \textnormal{if }s,t \in \mathscr{J}, s\leq t \\
           0  & \textnormal{otherwise}
        \end{array} \right .
    \end{align}
\end{definition}

\begin{definition}[The Direct Sum of $\textbf{P}-$Indexed Modules]
    The direct sum of $\textbf{P}-$indexed modules $F$, and $G$, denoted $F \oplus G$, can be defined in the following manner:
    \begin{enumerate}[(i)]
        \item if we know $t \in \textbf{P}$, then $(F \oplus G)_t \coloneq F_t \oplus G_t$,
        \item if we have $s \leq t$ as a morphism in $\textbf{P}$, we denote the linear map in $\textbf{Vec}$ as $\varphi_{F \oplus G}(s,t): (F \oplus G)_s \to (F \oplus G)_t$, and define it by $\varphi_{F \oplus G}(s,t)(v,w)\coloneq (\varphi_F(s,t)(v),\varphi_G(s,t)(w))$ for any $(v,w) \in (F \oplus G)_s$.
    \end{enumerate}
\end{definition}

\begin{definition}[Decomposable $\textbf{P}-$Indexed Modules]
    We say the a $\textbf{P}-$indexed module $F$ is decomposable if it is naturally isomorphic to $Q \oplus G$ where $Q$ and $G$ are non-trivial $\textbf{P}-$indexed modules. This decomposition is denoted as $F \cong Q \oplus G$. The module $F$ is indecomposable if the only decompositions $F \cong Q \oplus G$ have $Q$ or $G$ equal to the zero module. 
\end{definition}

\begin{definition}[Zigzag Modules]
    \textbf{Zigzag modules} are exactly $\mathbb{ZZ}-$indexed modules $F: \mathbb{ZZ} \to \textbf{Vec}$, where if we have $s \in \mathbb{ZZ}$, $F_s$ is a finite dimensional vector space.
\end{definition}

\begin{remark}[Describing The Barcodes of Zigzag Modules]
    If we have the symbol $<$ represent the partial order on $\mathbb{Z^2}$, we may observe that the intervals of $\mathbb{ZZ}$ are only ever of the four following types. These four types correspond to whether the endpoints are critical points $c_i$ or intermediate points $s_i$:
    \begin{align}
        (b,d)_{\mathbb{ZZ}} &\coloneq \{ (i,j) \in \mathbb{ZZ} : (b,b) < (i,j) < (d,d) \} && \textnormal{for } b<d \in \mathbb{Z} \cup \{ -\infty , \infty \}, \\
        [b,d)_{\mathbb{ZZ}} &\coloneq \{ (i,j) \in \mathbb{ZZ} : (b,b) \leq (i,j) < (d,d) \} && \textnormal{for } b<d \in \mathbb{Z} \cup \{\infty \}, \\
        (b,d]_{\mathbb{ZZ}} &\coloneq \{ (i,j) \in \mathbb{ZZ} : (b,b) < (i,j) \leq (d,d) \} && \textnormal{for } b<d \in \mathbb{Z} \cup \{ -\infty \}
        \\ 
        [b,d]_{\mathbb{ZZ}} &\coloneq \{ (i,j) \in \mathbb{ZZ} : (b,b) \leq (i,j) \leq (d,d) \} && \textnormal{for } b<d \in \mathbb{Z}
    \end{align}
    Moreover, we say $\langle b,d \rangle_{\mathbb{ZZ}}$ denote any of the above without specified type. 
\end{remark}

\begin{theorem}[The Decomposability of Zigzag Modules]
    If we let $F: \mathbb{ZZ} \to \textbf{Vec}$ be a zigzag module, it can be decomposed in the following manner:
    \begin{align}
        F \cong \bigoplus_{j \in J} I^{\langle b_j, d_j \rangle_{\mathbb{ZZ}}}
    \end{align}
    where $J$ is some unique index set (up to permutation of terms in the direct sum).
\end{theorem}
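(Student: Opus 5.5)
The plan is the classical one for zigzag modules (Gabriel's theorem for type $A$ quivers, in the form of Carlsson--de Silva and Botnan). It has two parts: first show existence of the decomposition, then uniqueness via the Krull--Remak--Schmidt--Azumaya theorem.

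\textbf{Existence, finite case.} I would first restrict to finite segments of $\mathbb{ZZ}$, where $F$ is a representation of an $A_n$ quiver with alternating orientation.

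Since every $F_s$ is finite dimensional, the total dimension of $F$ on a finite segment is finite. Induction on that dimension then shows $F$ is a finite direct sum of indecomposables.

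Next I show that every finite-dimensional indecomposable $A_n$-representation is an interval module $I^{\langle b,d\rangle}$. The argument is the standard one. Pick the leftmost index where $F$ is nonzero. Using the alternating arrows, split off a summand supported on a maximal run where the successive maps can be made simultaneously injective or surjective onto a chosen one-dimensional line. Concretely, choose a vector and propagate it along forward arrows by images and along backward arrows by preimages. Then show that the resulting thin submodule has a complement; a dimension count plus choosing complements compatible with the kernels and images does this.

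An alternative route is the reflection-functor proof. Indecomposables correspond to positive roots of $A_n$, and these are exactly the interval dimension vectors. This forces each indecomposable to be $I^{\mathscr{J}}$ for an interval $\mathscr{J}$ of the four listed types.

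\textbf{Existence, infinite case.} To pass to all of $\mathbb{ZZ}$, I would use the fact that $F$ is pointwise finite dimensional. By Botnan's theorem (or Crawley-Boevey's result for pointwise finite-dimensional modules over totally ordered-like posets), such a module decomposes into interval modules.

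Alternatively, exhaust $\mathbb{ZZ}$ by the segments $[-n,n]$ and decompose each. A compactness argument then shows the decompositions can be chosen compatibly: at each finite stage there are only finitely many choices of barcode restricted to a fixed segment, so König's lemma applies. The limit gives intervals $\langle b_j,d_j\rangle_{\mathbb{ZZ}}$, with endpoints possibly $\pm\infty$.

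\textbf{Uniqueness.} I would note that each interval module has endomorphism ring $\mathbb{F}$, which is local. The Krull--Remak--Schmidt--Azumaya theorem then gives that the multiset of summands is unique up to isomorphism and permutation, so the index set $J$ is unique in that sense. Equivalently, the multiplicity of each interval $\langle b,d\rangle$ can be recovered from ranks of the compositions of internal maps by inclusion--exclusion.

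\textbf{Main obstacle.} The step I expect to be hardest is the infinite case. Specifically, the compatible limit of finite decompositions needs care, because complements chosen at stage $n$ need not extend to stage $n+1$.

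My first attempt would be to avoid choosing complements altogether. Instead, I would use the functorial filtration given by images and kernels of the compositions $\varphi_F$ from the left and right at each point. These canonical subspaces stabilize by finite dimensionality. From them I would define the interval multiplicities, and then build an explicit isomorphism pointwise, as in Crawley-Boevey's argument.
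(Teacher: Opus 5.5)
The paper does not prove this statement. It is imported as background from Kim--M\'emoli, and ultimately rests on Gabriel's theorem for finite zigzags and on Botnan's theorem for pointwise finite-dimensional $\mathbb{ZZ}$-indexed modules. Your outline follows the same route, and the parts that rest on standard theorems are sound: Gabriel/BGP on finite segments, and Azumaya's theorem for uniqueness, since $\mathrm{End}(I^{\mathscr{J}})\cong\mathbb{F}$ is local.

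Your ``equivalent'' uniqueness argument, however, is false for zigzags. In $\mathbb{ZZ}$ only adjacent points are comparable, so the internal maps are just the single arrows. Consider $\mathbb{F}\to\mathbb{F}^2\leftarrow\mathbb{F}$ with both maps onto the same line, versus onto different lines. The two modules have the same dimensions and the same ranks of all internal maps. Yet their barcodes differ: a three-point bar plus a one-point bar, versus two two-point bars. Multiplicities must come from finer data, such as ranks of limit-to-colimit maps over sub-intervals.

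Also, invoking Botnan for the infinite case is invoking the statement itself; Crawley-Boevey's theorem concerns totally ordered index sets. So existence rests on your self-contained arguments, and those have genuine defects.

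\textbf{The ``maximal run'' recipe fails in the finite case.} Take $F_{(0,0)}=F_{(1,1)}=\mathbb{F}$ and $F_{(1,0)}=\mathbb{F}^2$, with the coordinate projections $p_1\colon F_{(1,0)}\to F_{(0,0)}$ and $p_2\colon F_{(1,0)}\to F_{(1,1)}$, and $F=0$ elsewhere. Then $F\cong I^{[0,1)_{\mathbb{ZZ}}}\oplus I^{(0,1]_{\mathbb{ZZ}}}$. Propagate $1\in F_{(0,0)}$ through a preimage $(1,y)$ with $y\neq 0$. This gives the longest run and a submodule isomorphic to $I^{[0,1]_{\mathbb{ZZ}}}$. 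That submodule has no complement: a complement would be a line in $F_{(1,0)}$ killed by both $p_1$ and $p_2$, and no such line exists. The choice of preimages is where all the content lies; here you must lift into $\ker p_2$. The choice has to be dictated by kernels and images of the onward maps, not by maximality.

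\textbf{K\"onig's lemma is applied to the wrong objects in the infinite case.} By finite uniqueness, the barcodes of the restrictions $F|_{[-n,n]}$ are canonical and compatible under truncation, so there is no branching to resolve. Moreover, a compatible family of barcodes yields no isomorphism $F\cong\bigoplus_j I^{\langle b_j,d_j\rangle_{\mathbb{ZZ}}}$. What must be chosen compatibly are the decompositions themselves. Over an infinite field there are infinitely many at each stage, so the tree is not finitely branching. (Over a finite field, K\"onig applied to the tree of internal interval decompositions does work.)

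\textbf{The functorial-filtration fallback is incomplete.} It is the right idea, but its hard step is not defining multiplicities. The hard step is to choose, for each interval, a complement of the lower cut inside the upper cut at one point, transport it along the zigzag so the choices agree, and prove the resulting subspaces are independent and span every $F_s$. Your sketch leaves exactly this to the citation.
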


\begin{definition}[The Barcode of A Zigzag Module]
    We define the \textbf{barcode of a zigzag module} as $\textnormal{dgm}(F) = \{ \langle b_j, d_j \rangle_{\mathbb{ZZ}}: j \in J \}$ of $F$.
\end{definition}

\subsection{Step 3: Formigrams to Barcodes}

\subsubsection{Important Definitions}

\begin{definition}[Canonical Map]
    If we have some $P, Q \in \mathcal{P}^{\textnormal{sub}}(X)$ where we have that $P \leq Q,$ we say that the canonical map $P \to Q$ is described as for each block $A \in P$ is sent to a unique block $B \in Q$, where we have that $A \subset B$.
\end{definition}

\begin{definition}[Free Functor]
    We will denote the \textbf{free functor} as the functor $\mathfrak{V}_{\mathbb{F}}: \textbf{Sets} \to \textbf{Vec}$. More specifically, if we are given a set $S$, we define the following 
    \begin{align}
        \mathfrak{V}_{\mathbb{F}}(S) \coloneq \left \{ \sum_{s_i \in S} a_i s_i : a_i \in \mathbb{F}, s_i \in S \right  \}
    \end{align}
    Essentially, this set consists of formal linear combinations of finite terms of elements of the set $S$ over the field $\mathbb{F}$. Furthermore, if we are given a map of sets $f: S \to T$, we say that $\mathfrak{V}_{\mathbb{F}}(f)$ is really the linear map between $\mathfrak{V}_{\mathbb{F}}(S)$ and $\mathfrak{V}_{\mathbb{F}}(T)$, that is created by linearly extending $f$.
\end{definition}

\subsubsection{Formigrams to Zigzag Modules}

\begin{algorithm}\cite[Section~5.1]{KimMemoli2021}(Creating a Zigzag Module from a Formigram)
    The algorithm to create a zigzag module from a formigram can be outlined in a sequence of steps.
    \begin{enumerate}[(i)]
        \item Consider the formigram $\theta_X$ over the set $X$. Since $\theta_X$ changes only at the critical points in $crit(\theta_X)$, it suffices to record its behavior on a discrete subset $K$ of $\mathbb{R}$ that is dense enough to separate consecutive critical values. \\ We will define the collection $\mathfrak{C}(\theta_X)$. For all $C \in \mathfrak{C}(X)$, we have $C= \{ c_i \in \mathbb{R}: i \in \mathbb{Z} \}$ such that $\cdots < c_{i-1}<c_i< c_{i+1}< \cdots, \lim_{i \to +\infty} c_i = +\infty, \lim_{-\infty} c_i=- \infty$, and $\textnormal{crit}(\theta_X) \subset C$. Furthermore, we will define subcollection $\textnormal{Subdiv}(C)$ consisting of sets $S = \{ s_i \in \mathbb{R} :i \in \mathbb{Z} \}$ for each $C \in \mathfrak{C}(\theta_X)$, such that $s_i \in (c_i, c_{i+1}), \forall i \in \mathbb{Z}$. For example, if we choose a $C \in \mathfrak{C}(\theta_X)$ and $X \in \textnormal{Subdiv}(C)$, we can get the set $K$ organized in the following fashion:
        \begin{align}
            K \coloneq C \cup S = \{ \cdots < c_{i - 1} < s_{i-1} < c_i < s_i < c_{i+1} < \cdots \}
        \end{align}
        \item The partial order on $\mathcal{P}^{\mathrm{sub}}(X)$ induced
by coarsening gives a canonical map $\theta_X(s) \to \theta_X(c)$
whenever $s$ lies between two consecutive critical points. The
sub-partition at an intermediate value maps into the coarser
partition at the adjacent critical value. Any $K$ constructed in
this way is called an \emph{indexing set} for $\theta_X$, and the
following chain of canonical maps arises from this set:
        \[
        \begin{tikzcd}
        	& {\theta_X(c_{i-1})} && {\theta_X(c_{i})} && {\theta_X(c_{i+1})} & \\
        	\cdots && {\theta_X(s_{i-1})} && {\theta_X(s_{i})} && \cdots
        	\arrow[from=2-1, to=1-2]
        	\arrow[from=2-3, to=1-2]
        	\arrow[from=2-3, to=1-4]
        	\arrow[from=2-5, to=1-4]
        	\arrow[from=2-5, to=1-6]
        	\arrow[from=2-7, to=1-6]
        \end{tikzcd}
        \]
        \item To extract algebraic invariants, we pass from the category
$\mathbf{Sets}$ to the category $\mathbf{Vec}$ by applying the free
functor $\mathfrak{V}_{\mathbb{F}} : \mathbf{Sets} \to \mathbf{Vec}$,
which sends each sub-partition to the vector space it spans and
extends each canonical map linearly. This yields the following
diagram in $\mathbf{Vec}$:

        \[
            \begin{tikzcd}
            	&& {V_{c_{i-1}}} && {V_{c_{i}}} && {V_{c_{i+1}}} & \\
            	{V_{\theta_X}:} & \cdots && {V_{s_{i-1}}} && {V_{s_{i}}} && \cdots
            	\arrow[from=2-2, to=1-3]
            	\arrow[from=2-4, to=1-3]
            	\arrow[from=2-4, to=1-5]
            	\arrow[from=2-6, to=1-5]
            	\arrow[from=2-6, to=1-7]
            	\arrow[from=2-8, to=1-7]
            \end{tikzcd}
         \]
         These maps are created by taking the indexing set $K$ and mapping them with a $\mathbb{ZZ}$ poset by the following bijection
         \begin{align}
             f_K : K \to \mathbb{ZZ} \textnormal{ where } c_i \mapsto (i,i) \textnormal{ and } s_i \mapsto (i + 1, i) \textnormal{ with } i \in \mathbb{Z}
         \end{align}
         With this bijection, We say that $V_{\theta_X}$ is a $\mathbb{ZZ}-$indexed module.
         \item Since $V_{\theta_X}$ is considered to be a zigzag module, it can be decomposed into a direct sum of interleval modules. This decomposition is defined as such:
         \begin{align}
                V_{\theta_X} \cong \bigoplus_{j \in J} I^{\langle b_j, d_j \rangle_\textbf{ZZ}}.
        \end{align}
        We say that $J$ is some indexing set. We also say that for $j \in J$, $\langle b_j, d_j \rangle_{\mathbb{ZZ}} \in \mathbb{ZZ}$ has an interval in $K$ defined as $f_K^{-1}(\langle b_j , d_j \rangle_{\mathbb{ZZ}})$ where this interval contains consecutive elements of $K$.
    \end{enumerate}
\end{algorithm}

\begin{definition}[Map Extension to Intervals in $\mathbb{R}$]
    Take the sets $K$ and $\mathbb{R}$. We define $\textbf{Int}(K)$ and $\textbf{Int}(\mathbb{R})$ as the set of all intervals in each particular set. Using these sets we can create the map extension
    \begin{align}
        \Psi_K : \textbf{Int}(K) \to \textbf{Int}(\mathbb{R})
    \end{align}
    defined in the following way:
    \[
    \begin{array}{c|c}
    \mathrm{Int}(K) & \mathrm{Int}(R) \\ \hline
    [c_i, c_j]_K \;\mapsto\; [c_i, c_j] & \text{for } i \leq j \\[6pt]
    [c_i, s_j]_K \;\mapsto\; [c_i, c_{j+1}) & \text{for } i \leq j \\[6pt]
    [s_i, c_j]_K \;\mapsto\; (c_i, c_j] & \text{for } i < j \\[6pt]
    [s_i, s_j]_K \;\mapsto\; (c_i, c_{j+1}) & \text{for } i \leq j \\[10pt]
    (-\infty,\infty)_K \;\mapsto\; (-\infty,\infty) & \\[6pt]
    (-\infty,c_i]_K \;\mapsto\; (-\infty,c_i] & \\[6pt]
    (-\infty,s_i]_K \;\mapsto\; (-\infty,c_{i+1}] & \\[6pt]
    [c_i,\infty)_K \;\mapsto\; [c_i,\infty) & \\[6pt]
    [s_i,\infty)_K \;\mapsto\; (c_i,\infty) &
    \end{array}
    \]
    In the last entry, note that because $s_i > c_i$ strictly, the corresponding real interval is open at $c_i$. 
\end{definition}

\subsubsection{Barcodes of Formigrams}

\begin{remark}
    Using the map extension $\Psi_K$ we arrive at a multiset containing intervals of $\mathbb{R}$ defined by $\Psi_K \circ \textnormal{dgm}_K(V_{\theta_X}) \coloneq \{ \{ \Psi_K (f^{-1}(\langle b_j, d_j \rangle_{\mathbb{ZZ}})): j \in J \} \}$,  where $\{\{ ~ \}\}$ denotes a multiset, allowing repeated elements. 
\end{remark}

\begin{definition}\label{Def:Barcode-formigram}[Barcode of A Formigram]
    The \textbf{barcode of a Formigram} $\theta_X$ over the set $X$ is the multiset $\Psi_K \circ \textnormal{dgm}_K(\mathbb{V}_{\theta_X})$. This barcode is denoted as $\textnormal{dgm}(\theta_X)$.
\end{definition}

\begin{note}
    We will note that there is invariance associated with the indexing set $K = C \cup S$. As long as $C \in \mathfrak{C}(\theta_X)$ and $S \in \textnormal{Subdiv}(C)$, the multiset $ \Psi_k \circ \textnormal{dgm}_K(\mathbb{V}_{\theta_X})$ is not affected by the set $K$. This irrelevance of a unique specification of $K$ is further explained in \cite{KimMemoli2021}.
\end{note}

\subsection{Stability Results}\label{Sec:stability}

\begin{notation}
    We must first define the notion of a distance between DGs. We will first create the following notation.
    \begin{enumerate}[(i)]
        \item We say that $A + t \coloneq \{ a +t : a \in A \}$ for any subset $A \subset \mathbb{R}$ and any particular $t \in \mathbb{R}$.
        \item We say that $A^\varepsilon \coloneq [ a - \varepsilon, b + \varepsilon ]$ for a closed interval $A = [a,b] \subset \mathbb{R}$, and $\varepsilon \geq 0$.
        \item We say $[t]^\varepsilon \coloneq [t - \varepsilon, t + \varepsilon] \subset \mathbb{R}$ for any $t \in \mathbb{R}$ and $\varepsilon \geq 0$.
     \end{enumerate}
\end{notation}

\subsubsection{Creating $\varepsilon-$smoothing}

\begin{remark}
    Now the notion of $\varepsilon-$smoothing for $\varepsilon \geq 0$ will be introduced as an essential component in defining the distance between DGs. This $\varepsilon-$smoothing allows us to smooth our DGs so that they are resistant to noise and other short lived phenomena.
\end{remark}

\begin{definition}[The Time-Interlevel Smoothing of A DG]
    Consider the DG $\mathscr{G}=(V_X(\cdot), E_X(\cdot))$. We will define the following
    \begin{enumerate}[(i)]
        \item Let $I \subset \mathbb{R}$, then we can create the following:
        \begin{align}
            \bigcup_I \mathscr{G}_X \coloneq \left ( \bigcup_{t \in I} V_X(t), \bigcup_{t \in I} E_X(t)  \right ).
        \end{align}
        \item We say the $\varepsilon-$smoothing $S_\varepsilon \mathscr{G}_X$ of $\mathscr{G}_X$ with $\varepsilon \geq 0$ is defined as
        \begin{align}
            S_\varepsilon \mathscr{G}_X(t) = \bigcup_{[t]^\varepsilon} \mathscr{G}_X
        \end{align}
        for $t \in \mathbb{R}$.
    \end{enumerate}
\end{definition}

\begin{proposition}\label{Prop:smoothing-DG}
    We say that the graph $S_\varepsilon \mathscr{G}_X$ is in fact a DG. (See \cite[Proposition 6.7]{KimMemoli2021}).
\end{proposition}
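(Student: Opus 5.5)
We verify directly that $S_\varepsilon \mathscr{G}_X = (V'(\cdot),E'(\cdot))$, where $V'(t) = \bigcup_{s\in[t]^\varepsilon} V_X(s)$ and $E'(t) = \bigcup_{s\in[t]^\varepsilon} E_X(s)$, satisfies the four axioms of Definition \ref{Def:Dynamic-graph}. The first task is to check that $E'(t) \subset \mathcal{P}_2(V'(t))$, so that $S_\varepsilon\mathscr{G}_X(t)$ is a graph. This holds because every edge in $E_X(s)$ has endpoints in $V_X(s) \subset V'(t)$.

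\textbf{Self-loops.} If $x\in V'(t)$, then $x\in V_X(s)$ for some $s\in[t]^\varepsilon$. Hence $(x,x)\in E_X(s)\subset E'(t)$.

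\textbf{Lifespan.} A vertex $x$ lies in $V'(t)$ if and only if $[t]^\varepsilon\cap I_x\neq\emptyset$. Writing $I_x=[a,b]$, this condition is equivalent to $t\in[a-\varepsilon,b+\varepsilon]=I_x^\varepsilon$. So the new lifespan is a non-empty closed interval. The unbounded cases, $a=-\infty$ or $b=\infty$, are handled the same way.

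\textbf{Tameness.} For each vertex $x$ and each edge $e$, the set $\{t : e\in E_X(t)\}$ is a locally finite union of intervals, with endpoints in $\mathrm{crit}(\mathscr{G}_X)$. Smoothing replaces this set by its $\varepsilon$-thickening $\{t : [t]^\varepsilon$ meets it$\}$. That thickening is again a locally finite union of intervals, with endpoints in $\mathrm{crit}(\mathscr{G}_X)\pm\varepsilon$. Since $X$ is finite, there are finitely many possible edges, so
$$\mathrm{crit}(S_\varepsilon\mathscr{G}_X)\subset (\mathrm{crit}(\mathscr{G}_X)-\varepsilon)\cup(\mathrm{crit}(\mathscr{G}_X)+\varepsilon).$$
The right-hand side is locally finite.

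\textbf{Comparability.} This is the step I expect to be the main obstacle, because it must be checked at every $t$, including points where the thickened membership set of an edge has an endpoint. The plan is to work one vertex or edge at a time. For a fixed edge $e$, let $T_e=\{s : e\in E_X(s)\}$. Then
$$e\in E'(t) \iff [t-\varepsilon,t+\varepsilon]\cap T_e\neq\emptyset.$$
The closedness needed here comes from the original comparability axiom. If $s_n\to s$ with $s_n \in T_e$ and $s_n \neq s$ for all $n$, then comparability at $s$ gives $E_X(s_n)\subset E_X(s)$ once $|s_n - s|$ is small. Hence $e \in E_X(s)$, so $T_e$ is closed.

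Now suppose $e\in E'(t\pm\delta)$ for arbitrarily small $\delta>0$, say along a sequence $\delta_n \to 0$. Then there are points $s_n\in T_e$ with $|s_n-t|\le \varepsilon+\delta_n$. By tameness these $s_n$ can be taken from finitely many interval components of $T_e$ near $t$, so some limit point $s$ satisfies $|s-t|\le\varepsilon$. Since $T_e$ is closed, $s\in T_e$, and therefore $e\in E'(t)$.

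Applied to every edge and every vertex, this yields $S_\varepsilon\mathscr{G}_X(t\pm\delta)\subset S_\varepsilon\mathscr{G}_X(t)$ for all sufficiently small $\delta$. Here "sufficiently small" means less than the distance from $t$ to the nearest other point of $\mathrm{crit}(S_\varepsilon\mathscr{G}_X)$, which is positive by tameness.

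The only delicate point is closedness of $T_e$, which rests entirely on comparability of $\mathscr{G}_X$. The vertex version of the same argument follows from the lifespan computation above.
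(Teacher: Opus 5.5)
Your argument is correct. Note that the paper gives no proof of its own here and simply defers to \cite[Proposition 6.7]{KimMemoli2021}, so what you have written is a self-contained verification rather than a reconstruction of something in the text.

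The self-loop step, the lifespan step ($I_x\mapsto I_x^\varepsilon$) and the tameness bound $\mathrm{crit}(S_\varepsilon\mathscr{G}_X)\subset(\mathrm{crit}(\mathscr{G}_X)-\varepsilon)\cup(\mathrm{crit}(\mathscr{G}_X)+\varepsilon)$ are all fine.

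For comparability you take an indirect route. You first show that each membership set $T_e$ is closed, using comparability of $\mathscr{G}_X$. Then $\{t: e\in E'(t)\}$ is the closed $\varepsilon$-neighbourhood of $T_e$, and you conclude with a limit argument, edge by edge.

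A shorter route applies comparability of $\mathscr{G}_X$ directly at the endpoints $t\pm\varepsilon$ of the window. For small $\delta>0$,
\[
[t+\delta-\varepsilon,\,t+\delta+\varepsilon]\subset[t-\varepsilon,\,t+\varepsilon]\cup(t+\varepsilon,\,t+\varepsilon+\delta],
\]
and comparability at $t+\varepsilon$ gives $\mathscr{G}_X(s)\subset\mathscr{G}_X(t+\varepsilon)\subset S_\varepsilon\mathscr{G}_X(t)$ for every $s$ in the second piece. The case $t-\delta$ is symmetric, using comparability at $t-\varepsilon$. This needs no sequences, no closedness lemma, and no finiteness of $X$, since one comparability radius at each endpoint handles all edges at once.

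Your version buys a more structural picture: closed membership sets whose thickenings are again closed. That picture simultaneously explains the tameness bound.

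Two minor remarks. First, extracting the limit point $s$ does not need tameness; compactness of $[t-\varepsilon-\delta_1,\,t+\varepsilon+\delta_1]$ suffices. Second, finiteness of $X$ is not needed in the tameness step either, because every boundary point of every thickened membership set already lies in $\mathrm{crit}(\mathscr{G}_X)\pm\varepsilon$.
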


\begin{definition}[Tripod between Two Sets]
    Consider the sets $X$ and $Y$. The \textbf{tripod} $R$ between $X$ and $Y$ can be described in the following diagram:
    \begin{align}
        R: X \xleftarrow[]{\varphi_X} Z \xrightarrow[]{\varphi_Y} Y.
    \end{align}
    We also require that both $\varphi_X$ and $\varphi_Y$ are surjections.
\end{definition}

\begin{definition}[$\varepsilon-$Tripod between DGs]
    We say two DGS $\mathscr{G}_X = (V_X(\cdot), E_X(\cdot))$, and $\mathscr{G}_Y = (V_Y(\cdot), E_Y(\cdot))$  are $\varepsilon-$interleaved if a tripod $R$ can be found for the sets $X$ and $Y$ where 
    \begin{align}
        \mathscr{G}_X \xrightarrow[]{R} S_\varepsilon \mathscr{G}_Y \quad \textnormal{and} \quad \mathscr{G}_Y \xrightarrow[]{R} S_\varepsilon \mathscr{G}_X.
    \end{align}
    This pair of mappings is denoted as a particular $\varepsilon-$tripod of the DGs $\mathscr{G}_X$ and $\mathscr{G}_Y$.
\end{definition}

\begin{definition}[Interleaving Distance Between Two DGs]
    If we allow $\mathscr{G}_X$ and $\mathscr{G}_Y$ to DGs, the infimum for all possible $\varepsilon \geq 0$ such that a $\varepsilon-$tripod among the two DGs $\mathscr{G}_X$ and $\mathscr{G}_Y$ exists is denoted as the interleaving distance $d_1^{\textnormal{dynG}}(\mathscr{G}_X, \mathscr{G}_Y)$. In the case where an $\varepsilon-$tripod does not exist between $\mathscr{G}_X$ and $\mathscr{G}_Y$ for all possible $\varepsilon \geq 0$, we say that $d_1^{\textnormal{dynG}}(\mathscr{G}_X, \mathscr{G}_Y) = + \infty$
\end{definition}

\begin{definition}[Pseudo-Metric]
    A pseudo-metric $d$ on a set $S$ fulfills the same properties as a metric except we allow $d(x,y) = 0$ for some $x,y \in S$ and $x\neq y$. 
\end{definition}

\begin{definition}[Extended Pseudo-Metric]
    An extended pseudo-metric $d$ on a set $S$ is a pseudo metric where we allow $d(x,y) = \infty$ for some $x,y \in S$.
\end{definition}

\begin{theorem}\label{Thm:pseudo-metric}
    We say that $d_1^{\textnormal{dynG}}$ is an extended pseudo-metric on the set of DGs $\mathcal{N}$. (See \cite[Theorem~6.10]{KimMemoli2021})
\end{theorem}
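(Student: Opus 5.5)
To prove that $d_1^{\textnormal{dynG}}$ is an extended pseudo-metric on $\mathcal{N}$, we check four things in turn: values in $[0,\infty]$, vanishing on the diagonal, symmetry, and the triangle inequality. Non-negativity, and the value $+\infty$, hold by definition: the distance is an infimum over $\varepsilon \geq 0$, with the convention $+\infty$ when no $\varepsilon$-tripod exists. For $d_1^{\textnormal{dynG}}(\mathscr{G}_X,\mathscr{G}_X)=0$ we use the identity tripod $X \xleftarrow{\mathrm{id}} X \xrightarrow{\mathrm{id}} X$ with $\varepsilon=0$. Since $S_0\mathscr{G}_X(t)=\bigcup_{[t]^0}\mathscr{G}_X=\mathscr{G}_X(t)$, the required inclusions $\mathscr{G}_X \xrightarrow{R} S_0\mathscr{G}_X$ hold trivially. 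Symmetry follows because the definition of an $\varepsilon$-tripod is symmetric in $X$ and $Y$. Reversing $R$ to $Y \xleftarrow{\varphi_Y} Z \xrightarrow{\varphi_X} X$ swaps the two required conditions, so the two sets of admissible $\varepsilon$ coincide.

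The main work is the triangle inequality $d(\mathscr{G}_X,\mathscr{G}_W)\le d(\mathscr{G}_X,\mathscr{G}_Y)+d(\mathscr{G}_Y,\mathscr{G}_W)$. If either term on the right is infinite there is nothing to prove. Otherwise, let $R_1: X \xleftarrow{\varphi_X} Z_1 \xrightarrow{\varphi_Y} Y$ be an $\varepsilon_1$-tripod and $R_2: Y \xleftarrow{\psi_Y} Z_2 \xrightarrow{\psi_W} W$ an $\varepsilon_2$-tripod. We compose them via the fiber product
\[ Z = Z_1\times_Y Z_2 = \{(z_1,z_2) : \varphi_Y(z_1)=\psi_Y(z_2)\}, \]
with the maps $\omega_X(z_1,z_2)=\varphi_X(z_1)$ and $\omega_W(z_1,z_2)=\psi_W(z_2)$. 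Both maps are surjective because $\varphi_Y$ and $\psi_Y$ are surjective. The claim is that $R_2\circ R_1$ is an $(\varepsilon_1+\varepsilon_2)$-tripod. Taking the infimum over $\varepsilon_1$ and $\varepsilon_2$ then gives the inequality.

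To verify the claim, we unpack the relation $\mathscr{G}_X \xrightarrow{R} S_\varepsilon\mathscr{G}_Y$ in the standard Kim--Mémoli sense. For $z,z'\in Z$ and every $t$:
\begin{enumerate}[(i)]
\item if $\varphi_X(z)\in V_X(t)$, then $\varphi_Y(z)\in \bigcup_{[t]^\varepsilon}V_Y$;
\item if $\{\varphi_X(z),\varphi_X(z')\}\in E_X(t)$, then $\{\varphi_Y(z),\varphi_Y(z')\}\in\bigcup_{[t]^\varepsilon}E_Y$.
\end{enumerate}
Suppose now that $\{x,x'\}=\{\omega_X(z),\omega_X(z')\}\in E_X(t)$. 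By (ii) for $R_1$, the pair $\{\varphi_Y(z_1),\varphi_Y(z_1')\}$ lies in $E_Y(s)$ for some $s\in[t]^{\varepsilon_1}$. These points equal $\psi_Y(z_2)$ and $\psi_Y(z_2')$, so (ii) for $R_2$ at time $s$ puts $\{\psi_W(z_2),\psi_W(z_2')\}$ in $E_W(u)$ for some $u\in[s]^{\varepsilon_2}\subset[t]^{\varepsilon_1+\varepsilon_2}$. The vertex condition (i) is handled the same way, and the reverse direction from $W$ to $X$ is symmetric. The key identity underlying both is $S_{\varepsilon_2}S_{\varepsilon_1}\mathscr{G}\subseteq S_{\varepsilon_1+\varepsilon_2}\mathscr{G}$, which holds because $[t]^{\varepsilon_1}+[0]^{\varepsilon_2}=[t]^{\varepsilon_1+\varepsilon_2}$.

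We expect this composition step to be the main obstacle. The paper's notation $\mathscr{G}_X\xrightarrow{R}S_\varepsilon\mathscr{G}_Y$ has not been spelled out, so the proof must fix the precise meaning of a ``tripod morphism'' as conditions (i) and (ii) above. It must also check that this meaning is preserved under fiber products: each pair in $Z$ has to be realised through a common $Y$-point. This is exactly what the fiber product guarantees, and it is where surjectivity of the legs is used.
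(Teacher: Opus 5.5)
Your proposal is correct and follows the standard argument behind the cited Kim--Mémoli Theorem~6.10; the paper itself only cites that result. Your steps are: reflexivity via the identity tripod with $S_0\mathscr{G}_X=\mathscr{G}_X$, symmetry by reversing the tripod, and the triangle inequality by composing tripods through the fiber product $Z_1\times_Y Z_2$ together with $[s]^{\varepsilon_2}\subseteq[t]^{\varepsilon_1+\varepsilon_2}$ for $s\in[t]^{\varepsilon_1}$. One small correction: surjectivity of $\omega_X$ needs $\varphi_X$ and $\psi_Y$ surjective, and that of $\omega_W$ needs $\psi_W$ and $\varphi_Y$; it is the fiber-product construction itself, not surjectivity, that makes each pair in $Z$ pass through a common $Y$-point.
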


\begin{definition}[Sub-equivalence Relation]
    Consider the relation $\sim$ on the set $X' \subset X$ where $X$ is a non-empty set. We say that the relation $\sim$ is a sub equivalence relation on the set $X$. We say that the set $X'$ is the underlying set for $\sim$. Moreover,
    \begin{align}
        X' \coloneq \{x \in X : (x,x) \in \sim \}.
    \end{align}
\end{definition}

\begin{definition}[The Sub-equivalence Closure]
    Suppose we have a collection of sub-equivalence relations for an index set $I$. We can define this set as $\{\sim_i \subset X \times X : i \in I\}$ where $\sim_i$ is an equivalence relation of the set $X$. We say that the sub-equivalence closure $\sim_c$ for the set $\{ \sim_i \subset X \times X : i \in I \}$ is the smallest sub-equivalence relation containing all $\sim_i$ given any $i \in I$.
\end{definition}

\begin{definition}[Finest Common Coarsening]
    Consider the set $\mathcal{P}^{\textnormal{sub}}(X)$. We will denote $\{ P_i \}_{i \in I}$ as any subcollection of $\mathcal{P}^{\textnormal{sub}}(X)$ where for each $i \in I$, we let $\sim_i$ be a sub-equivalence relation on $X$ corresponding to the set $P_i$. We define $\bigvee_{i \in I} P_i$ as the sub-partition of $X$ that is the sub-equivalence closure for the collection $\{ \sim_i \subset X \times X : i \in I \}$. We say $\bigvee_{i \in I}P_i$ is the finest common coarsening for $\{P_i\}_{i \in I}$.
\end{definition}

\begin{definition}[Pullback of A Sub-partition and A Formigram]
    Once again, we will consider the collection of subcollections $\mathcal{P}^{\textnormal{sub}}(X)$. Let $P_X \in \mathcal{P}^{\textnormal{sub}}(X)$ with the underlying set $X$. If we have any $\varphi : Z \to X$, we say that $P_Z \coloneq \varphi^*P_X$ is the \textbf{pullback} of $P_X$ via $\varphi$. Moreover, we say $P_Z$ is the sub-partition of $Z$ defined as $P_Z = \{ \varphi^{-1}(B) : B \in P_X \}$. Furthermore, if we have a set $X$ with the formigram $\theta_X$ over that set, the pullback of $\theta_X$ via $\varphi$ is defined as follows
    \begin{align}
        \theta_Z \coloneq \varphi^*\theta_X \textnormal{ over } Z \textnormal{ defined as } \theta_Z(t) = \varphi^* \theta_X(t), \forall t \in \mathbb{R}
    \end{align}
\end{definition}

\begin{notation}[The Underlying Set of $\theta_X(t)$]
    We will denote the \textbf{underlying set of} $\theta_X(t)$ for a formigram $\theta_X$ over the set $X$, and for all $t \in \mathbb{R}$ by $U_{\theta_X}(t) \subset X$
\end{notation}

\begin{definition}[Tripod of A Formigram]
    Let $X$ and $Y$ share a tripod $R: X \xleftarrow[]{\varphi_X} Z \xrightarrow[]{\varphi_Y} Y$. Then there exists a tripod $R$ between $\theta_X$ and $\theta_Y$, $\theta_X \xrightarrow[]{R} \theta_Y$, if and only if particular conditions are met for any fixed $t \in \mathbb{R}$. For the sake of brevity, these conditions can be found in \cite{KimMemoli2021}.
\end{definition}

\begin{definition}[Time-Interlevel Smoothing of A Formigram]
    We will define the notion of \textbf{time-interlevel smoothing for formigrams} by first considering a formigram $\theta_X$ over the set $X$. Now we must consider the following procedures
    \begin{enumerate}[(i)]
        \item For an interval $I \subset \mathbb{R}$, the finest common coarsening of $\{ \theta_X(t) : t \in I \}$ is defined as $\bigvee_I \theta_X$
        \item Given an $\varepsilon \geq 0$, the $\varepsilon-$\textbf{smoothing} $S_\varepsilon\theta_X$ for the formigram $\theta_X$ is defined by the following
        \begin{align}
            (S_\varepsilon \theta_X)(t) = \bigvee_{[t]^\varepsilon}\theta_X \textnormal{ for } t\in \mathbb{R}
        \end{align}
    \end{enumerate}
\end{definition}

\begin{proposition} The $\varepsilon-$smoothing
    $S_\varepsilon \theta_X$ is a formigram. (See \cite[Proposition 6.24]{KimMemoli2021}).
\end{proposition}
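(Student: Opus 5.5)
We need to show that $S_\varepsilon\theta_X$ satisfies the three conditions of Definition \ref{Def:formigram}: tameness, interval lifespan and comparability. The plan is to derive each from the corresponding property of $\theta_X$, using the fact that $S_\varepsilon\theta_X(t)=\bigvee_{[t]^\varepsilon}\theta_X$ depends only on the finitely many distinct values $\theta_X$ takes on $[t-\varepsilon,t+\varepsilon]$. Since $\bigvee$ is a sub-equivalence closure, its underlying set is the union of the underlying sets. Hence
\[
U_{S_\varepsilon\theta_X}(t)=\bigcup_{s\in[t]^\varepsilon}U_{\theta_X}(s).
\]

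\textbf{Lifespan.} An element $x$ lies in $U_{S_\varepsilon\theta_X}(t)$ if and only if $[t-\varepsilon,t+\varepsilon]$ meets the lifespan $I_x=[a,b]$ of $x$ in $\theta_X$. This happens exactly when $t\in[a-\varepsilon,b+\varepsilon]=I_x^\varepsilon$, which is a non-empty closed interval (with the obvious modification if $a=-\infty$ or $b=\infty$).

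\textbf{Tameness.} Fix $t$. As $t$ varies, the window $[t]^\varepsilon$ changes which critical points of $\theta_X$ it contains only when $t\pm\varepsilon\in\mathrm{crit}(\theta_X)$. Between such times, the set of values $\{\theta_X(s):s\in[t]^\varepsilon\}$ is constant. To see this, note that on each open interval between consecutive critical points $\theta_X$ is constant. Moreover, whether the window endpoint sits exactly at a critical point or strictly inside a regular interval only matters at those finitely many times. Hence
\[
\mathrm{crit}(S_\varepsilon\theta_X)\subseteq(\mathrm{crit}(\theta_X)+\varepsilon)\cup(\mathrm{crit}(\theta_X)-\varepsilon),
\]
which is locally finite.

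\textbf{Comparability.} Fix $c$ and take $\delta>0$ small. I expect this step to need the most care, because the windows $[c\pm\delta]^\varepsilon$ are not contained in $[c]^\varepsilon$. I would handle it by local maximality of $\theta_X$ at the endpoints. The window $[c+\delta-\varepsilon,c+\delta+\varepsilon]$ differs from $[c]^\varepsilon$ only by dropping $[c-\varepsilon,c-\varepsilon+\delta)$, which only removes terms and so only coarsens less, and by adding $(c+\varepsilon,c+\varepsilon+\delta]$. For $\delta$ small, the added piece lies in a regular interval immediately to the right of $c+\varepsilon$. By comparability of $\theta_X$ at $c+\varepsilon$, we have $\theta_X(s)\le\theta_X(c+\varepsilon)$ for all $s$ in that piece. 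So every partition contributing to $S_\varepsilon\theta_X(c+\delta)$ is finer than some $\theta_X(s)$ with $s\in[c]^\varepsilon$. Since $\bigvee$ is monotone (it is the least common coarsening), $S_\varepsilon\theta_X(c+\delta)\le S_\varepsilon\theta_X(c)$. The case $c-\delta$ is symmetric, using comparability of $\theta_X$ at $c-\varepsilon$.

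The only subtlety is the monotonicity of $\bigvee$ under refinement of its arguments, i.e. that $P_i\le Q_{j(i)}$ for all $i$ implies $\bigvee P_i\le\bigvee Q_j$. This follows directly from the definition of the sub-equivalence closure: the closure of the $Q$-relations contains every $\sim_{P_i}$ and is a sub-equivalence relation, so it contains the smallest one containing them.
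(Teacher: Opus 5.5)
Your proof is correct and complete. The paper gives no argument of its own here; it only cites Kim and M\'{e}moli, so there is no in-text proof to match. Your direct check of the three conditions of Definition~\ref{Def:formigram} is a self-contained replacement for that citation:
\begin{itemize}
\item the lifespan $I_x^\varepsilon$, via $U_{S_\varepsilon\theta_X}(t)=\bigcup_{s\in[t]^\varepsilon}U_{\theta_X}(s)$;
\item tameness, via $\mathrm{crit}(S_\varepsilon\theta_X)\subseteq(\mathrm{crit}(\theta_X)+\varepsilon)\cup(\mathrm{crit}(\theta_X)-\varepsilon)$;
\item comparability, via local maximality of $\theta_X$ at $c\pm\varepsilon$ together with monotonicity of $\bigvee$, which you justify correctly from the definition of the sub-equivalence closure.
\end{itemize}

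If you preferred to lean on results the paper does quote, there is an alternative route. Realize $\theta_X$ as $\pi_0(\mathscr{G}_X)$ for the dynamic graph with vertex set $U_{\theta_X}(t)$ whose edges form a complete graph on each block of $\theta_X(t)$. Note that $\pi_0\bigl(\bigcup_I\mathscr{G}_X\bigr)=\bigvee_I\pi_0(\mathscr{G}_X)$, so $S_\varepsilon\theta_X=\pi_0(S_\varepsilon\mathscr{G}_X)$. Then conclude from Propositions~\ref{Prop:smoothing-DG} and~\ref{Prop:DG-to-formigram}.

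That route trades your monotonicity lemma for two small lemmas: realizability of formigrams by dynamic graphs, and commutation of $\pi_0$ with smoothing. It also pushes the analytic work into the graph case. Your route is more elementary and does not depend on the dynamic-graph smoothing result, which the paper likewise only cites.
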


\begin{definition}[$\varepsilon-$Interleaved Formigrams]
    We say that two formigrams $\theta_X$ and $\theta_Y$ over the sets $X$ and $Y$ are $\varepsilon-$interleaved if $\exists R$ where $R$ is a tripod between $X$ and $Y$ such that
    \begin{align}
        \theta_X \xrightarrow[]{R} S_\varepsilon \theta_Y \quad \textnormal{and} \quad \theta_Y \xrightarrow[]{R}S_\varepsilon\theta_X
    \end{align}
    This $R$ is defined to be the $\varepsilon-$tripod between $\theta_X$ and $\theta_Y$.
\end{definition}

\begin{definition}[The Interleaving Distance between Formigrams]
    If we have two formigrams $\theta_X$ and $\theta_Y$ over the sets $X$ and $Y$, the interleaving distance $d_1^F(\theta_X, \theta_Y)$ between $\theta_X$ and $\theta_Y$ is defined as the value of $\varepsilon \geq 0$ where $\varepsilon$ is the infimum of all possible $\varepsilon$ where a $\varepsilon-$tripod between $\theta_X$ and $\theta_Y$ exists. Moreover, if this $\varepsilon-$tripod between $\theta_X$ and $\theta_Y$ doesn't exist for any $\varepsilon\geq 0$, then we say $d_1^F (\theta_X, \theta_Y) = + \infty$ 
\end{definition}

\begin{theorem}
    We say $d_1^F$ is an extended pseudo metric on the set of formigrams.
\end{theorem}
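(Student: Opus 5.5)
To prove that $d_1^F$ is an extended pseudo-metric on formigrams, I will check the axioms directly: $d_1^F(\theta_X,\theta_X)=0$, symmetry, and the triangle inequality. Values in $[0,\infty]$ are allowed by definition, since the infimum over an empty set is declared to be $+\infty$.

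\textbf{Identity.} For $d_1^F(\theta_X,\theta_X)=0$, I take the trivial tripod $X \xleftarrow{\mathrm{id}} X \xrightarrow{\mathrm{id}} X$ and $\varepsilon=0$. Since $[t]^0=\{t\}$, we have $S_0\theta_X=\theta_X$. The tripod conditions of \cite{KimMemoli2021} for $\theta_X \xrightarrow{R}\theta_X$ then reduce to $\theta_X(t)\le\theta_X(t)$ together with the equality of underlying sets, so both hold.

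\textbf{Symmetry.} The definition of an $\varepsilon$-tripod between formigrams is symmetric in the two formigrams once the tripod is reversed: $Y\xleftarrow{\varphi_Y}Z\xrightarrow{\varphi_X}X$ is again a tripod. Hence the sets of admissible $\varepsilon$ coincide for $(\theta_X,\theta_Y)$ and $(\theta_Y,\theta_X)$, and so do their infima.

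\textbf{Triangle inequality.} This is the main step. Suppose $R_1: X\xleftarrow{\varphi_X} Z_1\xrightarrow{\varphi_Y} Y$ is an $\varepsilon_1$-tripod between $\theta_X$ and $\theta_Y$, and $R_2: Y\xleftarrow{\psi_Y} Z_2\xrightarrow{\psi_W} W$ is an $\varepsilon_2$-tripod between $\theta_Y$ and $\theta_W$. I will compose them through the fiber product
\[
Z = Z_1\times_Y Z_2=\{(z_1,z_2): \varphi_Y(z_1)=\psi_Y(z_2)\}.
\]
Both projections from $Z$ are surjective because $\varphi_Y$ and $\psi_Y$ are surjective, so $X\leftarrow Z\rightarrow W$ is a tripod.

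Next, working with the pullback characterization $\theta_X\xrightarrow{R}\theta_Y$ iff $\varphi_X^*\theta_X(t)\le \varphi_Y^*\theta_Y(t)$ for all $t$ (with the appropriate underlying-set condition), I will use three facts:
\begin{enumerate}[(i)]
\item pullback is monotone with respect to coarsening;
\item pullback commutes with finest common coarsenings over an interval, i.e. $\varphi^*\bigvee_I\theta=\bigvee_I\varphi^*\theta$;
\item $S_{\varepsilon_2}S_{\varepsilon_1}\theta=S_{\varepsilon_1+\varepsilon_2}\theta$, because $\bigcup_{s\in[t]^{\varepsilon_2}}[s]^{\varepsilon_1}=[t]^{\varepsilon_1+\varepsilon_2}$ and joins are associative.
\end{enumerate}
With these, I chain the inequalities
\[
\theta_X \le S_{\varepsilon_1}\theta_Y \le S_{\varepsilon_1}S_{\varepsilon_2}\theta_W = S_{\varepsilon_1+\varepsilon_2}\theta_W,
\]
all pulled back to $Z$, and argue symmetrically in the other direction. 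This produces an $(\varepsilon_1+\varepsilon_2)$-tripod between $\theta_X$ and $\theta_W$. Taking infima over $\varepsilon_1$ and $\varepsilon_2$ gives $d_1^F(\theta_X,\theta_W)\le d_1^F(\theta_X,\theta_Y)+d_1^F(\theta_Y,\theta_W)$. If either distance on the right is infinite, the inequality holds trivially.

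\textbf{Expected obstacle.} The hardest step is fact (ii): showing that pullback along a surjection commutes with joins of sub-partitions, including the bookkeeping of underlying sets required by the full tripod conditions of \cite{KimMemoli2021}. I would prove it by noting that $z\sim z'$ in $\varphi^*P$ exactly when $\varphi(z)\sim\varphi(z')$ in $P$. A chain witnessing $\varphi(z)\sim\varphi(z')$ in the join $\bigvee_I\theta$ then lifts, using surjectivity of $\varphi$, to a chain witnessing $z\sim z'$ in $\bigvee_I\varphi^*\theta$; the converse direction is immediate. If this becomes unwieldy, I would fall back on citing \cite[Theorem~6.10 and Section~6]{KimMemoli2021}, where the analogous argument is carried out.
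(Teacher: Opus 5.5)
The paper does not prove this statement. It appears in the background section summarizing Kim--M\'emoli, where all results are declared to be drawn from that reference, and it is asserted with no proof and no specific pointer. So your direct verification is not competing with an argument in the paper. It is correct, and it is essentially the standard proof in the cited source:
\begin{itemize}
\item compose the two tripods through the fiber product $Z_1\times_Y Z_2$;
\item use the semigroup law $S_{\varepsilon_1}S_{\varepsilon_2}=S_{\varepsilon_1+\varepsilon_2}$;
\item commute pullback along the tripod legs with joins.
\end{itemize}

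Compared with the paper's bare assertion, your route shows exactly which structural facts are needed and where surjectivity of the legs enters. Fact (ii) genuinely fails without surjectivity. For example, pull back $\{\{a,b\}\}$ and $\{\{b,c\}\}$ along the inclusion $\{a,c\}\hookrightarrow\{a,b,c\}$: the pullback of the join is $\{\{a,c\}\}$, but the join of the pullbacks is $\{\{a\},\{c\}\}$. The paper's summary hides this, since it omits the tripod conditions for formigrams ``for brevity''.

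You use two further ingredients without listing them, and you should state both:
\begin{itemize}
\item Monotonicity of $S_\varepsilon$ under pointwise refinement. This is needed for the step $S_{\varepsilon_1}\theta_Y\le S_{\varepsilon_1}S_{\varepsilon_2}\theta_W$: smooth $\psi_Y^*\theta_Y\le\psi_W^*S_{\varepsilon_2}\theta_W$ on $Z_2$, then apply (ii) along the surjections $\psi_Y,\psi_W$.
\item Functoriality $(\varphi\circ p)^*=p^*\varphi^*$, together with the identity $\varphi_Y\circ p_1=\psi_Y\circ p_2$ on the fiber product. This is what lets you chain the inequality pulled back from $Z_1$ with the one pulled back from $Z_2$ on the common set $Z$. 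For these pullbacks along the projections, only monotonicity (i) and surjectivity of the projections are needed, not (ii).
\end{itemize}
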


\begin{result}[The Stability Result]
    The path component functor $\pi_0$ allows DGs to be mapped into formigrams in a stable manner via the following theorems
\end{result}

\begin{theorem}[$\pi_0$ is $1-$Lipschitz]
    Consider the DGs $\mathscr{G}_X$ and $\mathscr{G}_Y$. If we let $\pi_0(\mathscr{G}_X)$ and $\pi_0(\mathscr{G}_Y)$ be formigrams defined by the path component functor, then we see that
    \begin{align}
        d_1^F(\theta_X, \theta_Y) \leq d_1^{\textnormal{dynG}}(\mathscr{G}_X, \mathscr{G}_Y)
    \end{align}
  (See \cite[Theorem 6.32]{KimMemoli2021}).
\end{theorem}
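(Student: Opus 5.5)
The plan is to prove the inequality $d_1^F(\pi_0(\mathscr{G}_X),\pi_0(\mathscr{G}_Y)) \leq d_1^{\textnormal{dynG}}(\mathscr{G}_X,\mathscr{G}_Y)$ by transporting interleavings. I will show that every $\varepsilon$-tripod between the dynamic graphs $\mathscr{G}_X$ and $\mathscr{G}_Y$ is also an $\varepsilon$-tripod between the formigrams $\pi_0(\mathscr{G}_X)$ and $\pi_0(\mathscr{G}_Y)$. Taking the infimum over $\varepsilon$ then gives the inequality. If no tripod exists for any $\varepsilon$, the right-hand side is $+\infty$ and there is nothing to prove.

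The argument reduces to two lemmas.

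\textbf{First lemma (smoothing commutes with $\pi_0$).} For every DG $\mathscr{G}_X$ and every $\varepsilon\ge 0$,
\[
\pi_0(S_\varepsilon\mathscr{G}_X)=S_\varepsilon\pi_0(\mathscr{G}_X).
\]
Fix $t$. The graph $S_\varepsilon\mathscr{G}_X(t)$ is the union of the graphs $\mathscr{G}_X(s)$ for $s\in[t]^\varepsilon$. Two vertices lie in the same component of this union exactly when they are joined by a chain of edges, each edge lying in some $E_X(s)$. That is precisely the sub-equivalence closure of the relations ``same component of $\mathscr{G}_X(s)$'' over $s\in[t]^\varepsilon$, which is the finest common coarsening $\bigvee_{[t]^\varepsilon}\pi_0(\mathscr{G}_X)$. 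The underlying sets agree because both equal $\bigcup_{s\in[t]^\varepsilon} V_X(s)$.

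\textbf{Second lemma (tripod morphisms are preserved by $\pi_0$).} If $\mathscr{G}_X\xrightarrow{R}\mathscr{G}'_Y$ is a tripod morphism of DGs, then $\pi_0(\mathscr{G}_X)\xrightarrow{R}\pi_0(\mathscr{G}'_Y)$ is a tripod morphism of formigrams. Recall the DG condition from \cite{KimMemoli2021}: for each $t$, if $(\varphi_X(z),\varphi_X(z'))\in E_X(t)$ then $(\varphi_Y(z),\varphi_Y(z'))\in E'_Y(t)$, with the corresponding condition on vertices. The formigram condition is that the pullback $\varphi_X^*\theta_X(t)$ refines $\varphi_Y^*\theta_Y(t)$, with underlying sets included. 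To verify this, take $z,z'$ in the same block of $\varphi_X^*\pi_0(\mathscr{G}_X(t))$. Then there is a path $x_1,\dots,x_n$ in $\mathscr{G}_X(t)$ from $\varphi_X(z)$ to $\varphi_X(z')$. Using surjectivity of $\varphi_X$, lift each $x_i$ to some $z_i\in Z$, taking $z_1=z$ and $z_n=z'$. The edge condition gives a path $\varphi_Y(z_1),\dots,\varphi_Y(z_n)$ in $\mathscr{G}'_Y(t)$. Hence $z$ and $z'$ lie in the same block of $\varphi_Y^*\pi_0(\mathscr{G}'_Y(t))$.

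Combining the lemmas: an $\varepsilon$-tripod $R$ gives $\mathscr{G}_X\xrightarrow{R}S_\varepsilon\mathscr{G}_Y$. By the second lemma, $\pi_0(\mathscr{G}_X)\xrightarrow{R}\pi_0(S_\varepsilon\mathscr{G}_Y)$. By the first lemma, $\pi_0(S_\varepsilon\mathscr{G}_Y)=S_\varepsilon\pi_0(\mathscr{G}_Y)$. The symmetric direction is identical. Proposition~\ref{Prop:DG-to-formigram} and Proposition~\ref{Prop:smoothing-DG} ensure that all the objects involved are genuine formigrams and DGs.

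The main obstacle is the second lemma. The paper only cites the tripod conditions for formigrams from \cite{KimMemoli2021}, so I would first restate them precisely, including the underlying-set inclusion. I would then check that the path-lifting argument works with non-injective $\varphi_X$: consecutive lifts $z_i,z_{i+1}$ need not be related in any way other than through their images, so the edge condition must be stated on images, which it is. The first lemma is routine once the finest common coarsening is unwound as a transitive closure.
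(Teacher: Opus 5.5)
Your proposal is correct and follows essentially the same route as the proof of \cite[Theorem 6.32]{KimMemoli2021}, which the paper cites without reproving: an $\varepsilon$-tripod of dynamic graphs is shown to be an $\varepsilon$-tripod of the associated formigrams, because $\pi_0$ commutes with $\varepsilon$-smoothing and carries tripod morphisms to tripod morphisms. One small point should be stated explicitly in your path-lifting step: the self-loop axiom handles the degenerate case $\varphi_X(z)=\varphi_X(z')$ with $z\neq z'$ (use the path $x_1,x_1$ and lift it to $z,z'$), and it also gives the underlying-set inclusion.
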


\begin{corollary}
    Consider the formigram $\theta_X$ over $X$. Then for an arbitrary $\varepsilon \geq 0$
    \begin{align}
        d_B(\textnormal{dgm}(S_\varepsilon \theta_X), \textnormal{dgm}(\theta_X)) \leq \varepsilon.
    \end{align}
    (See \cite[Corollary~7.2]{KimMemoli2021}).
\end{corollary}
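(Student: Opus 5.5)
The plan is to reduce this to the stability chain already recorded: the bottleneck distance between barcodes of formigrams is bounded by the interleaving distance $d_1^F$ (the algebraic stability used in \cite[Thm 1.1]{KimMemoli2021}), so it suffices to show that $\theta_X$ and $S_\varepsilon\theta_X$ are $\varepsilon$-interleaved. Since the statement is quoted from \cite[Corollary~7.2]{KimMemoli2021}, I would follow the route there, though I expect the constant to need attention. The cleanest form of the stability theorem available at the formigram level is
\[
d_B(\mathrm{dgm}(\theta_X),\mathrm{dgm}(\theta_Y)) \le 2\, d_1^F(\theta_X,\theta_Y).
\]
A naive application of this inequality would only give $2\varepsilon$. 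The main work is therefore to obtain the sharp bound $\varepsilon$.

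\textbf{Step 1 (interleaving).} Take the identity tripod $R: X \xleftarrow{\mathrm{id}} X \xrightarrow{\mathrm{id}} X$. I would check the two conditions:
\begin{itemize}
\item $\theta_X \xrightarrow{R} S_\varepsilon(S_\varepsilon\theta_X)$. This holds because $\theta_X(t) \le \bigvee_{[t]^\varepsilon}\theta_X$, and smoothing is monotone.
\item $S_\varepsilon\theta_X \xrightarrow{R} S_\varepsilon\theta_X$. This holds trivially.
\end{itemize}
This shows $d_1^F(\theta_X, S_\varepsilon\theta_X)\le\varepsilon$. In fact the interleaving is one-sided: $\theta_X \le S_\varepsilon\theta_X$ pointwise, with no shift needed in one direction.

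\textbf{Step 2 (sharpening to $\varepsilon$).} I would exploit this one-sidedness by computing the barcode of $S_\varepsilon\theta_X$ directly in terms of that of $\theta_X$. The key claim is that smoothing acts on barcodes by interval thickening: each bar $\langle b,d\rangle$ of $\mathrm{dgm}(\theta_X)$ becomes a bar with endpoints $b-\varepsilon$ and $d+\varepsilon$. In addition, some short "ephemeral" bars, those of length at most $2\varepsilon$ corresponding to brief disband/merge episodes, disappear.

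To justify this, I would use the description of smoothing via the Reeb-graph / cosheaf viewpoint. The formigram's zigzag module is the levelset-zigzag of a constructible cosheaf on $\mathbb{R}$. Smoothing by $\varepsilon$ corresponds to taking colimits over $[t]^\varepsilon$, i.e.\ pulling back along the thickening of intervals. Under the correspondence between zigzag barcodes and the block decomposition of the associated $\mathbb{R}^{\mathrm{op}}\times\mathbb{R}$-module, this operation translates each block by $\varepsilon$ in the appropriate direction.

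\textbf{Step 3 (matching).} Given Step 2, I would match each surviving bar to its thickening, at cost exactly $\varepsilon$ in the bottleneck $\ell^\infty$ endpoint distance. Each vanished bar has length $\le 2\varepsilon$, so it can be matched to the diagonal at cost $\le\varepsilon$. This yields $d_B\le\varepsilon$.

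\textbf{Main obstacle.} The hard step is Step 2, namely controlling exactly how the zigzag (rather than ordinary) barcode transforms under smoothing, including the open/closed endpoint types from $\Psi_K$ and the disappearance of short bars. I would first attempt it by passing to the $\mathbb{R}^{\mathrm{op}}\times\mathbb{R}$ interlevel module, where smoothing is a literal diagonal shift. I would then invoke the block-decomposition correspondence used in \cite{KimMemoli2021}. If that fails, I would fall back to citing \cite[Corollary~7.2]{KimMemoli2021} directly.
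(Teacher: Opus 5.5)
Your Step 1 is correct but, as you note, only gives $2\varepsilon$, so everything rests on Step 2. The key claim there is false: smoothing does not thicken every bar. Closed endpoints move outward by $\varepsilon$, but open endpoints move \emph{inward} by $\varepsilon$.

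For a concrete case, take $X=\{x,y\}$ with $\theta_X(t)=\{\{x\},\{y\}\}$ for $t\in(0,1)$ and $\theta_X(t)=\{\{x,y\}\}$ otherwise. Then $\mathrm{dgm}(\theta_X)=\{(-\infty,\infty),(0,1)\}$. The smoothing $S_\varepsilon\theta_X(t)$ separates $x$ from $y$exactly when $[t]^\varepsilon\subset(0,1)$, i.e.\ for $t\in(\varepsilon,1-\varepsilon)$. So the bar $(0,1)$ becomes $(\varepsilon,1-\varepsilon)$, not $(-\varepsilon,1+\varepsilon)$.

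Likewise, a singleton born at $b$ that merges into another block at $d$ gives a bar $[b,d)$. This becomes the translate $[b-\varepsilon,d-\varepsilon)$. The vanishing of short bars that you mention is exactly this shrinking of open bars, and no thickening rule can produce it. So Step 2 as stated cannot be established.

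Your diagonal-shift idea is the right repair: blocks really are translated, but you misread what a translation does to bars. Put $M(a,b)=\mathfrak{V}_{\mathbb{F}}\bigl(\bigvee_{[a,b]}\theta_X\bigr)$ for $a\le b$.
\begin{itemize}
\item Restricting $M$ to the points $(s_i,s_i)$ and $(s_{i-1},s_i)$ recovers $V_{\theta_X}$, since comparability gives $\bigvee_{[s_{i-1},s_i]}\theta_X=\theta_X(c_i)$.
\item Since $\bigvee_{[a,b]}S_\varepsilon\theta_X=\bigvee_{[a-\varepsilon,b+\varepsilon]}\theta_X$, the analogous module for $S_\varepsilon\theta_X$ is $(a,b)\mapsto M(a-\varepsilon,b+\varepsilon)$.
\item $M(a,b)$ is the colimit of $V_{\theta_X}$ over the part of the zigzag lying in $[a,b]$; this is where you use that lifespans are intervals.
\item This colimit extension is additive and sends each interval module to a block (all with $x\le y$): $[b,d]$ goes to $\{(x,y):x\le d,\ y\ge b\}$, $(b,d)$ to $\{(x,y):b<x\le y<d\}$, $[b,d)$ to $\{(x,y):b\le y<d\}$, and $(b,d]$ to $\{(x,y):b<x\le d\}$.
\end{itemize}
Pulling these blocks back through the shift gives $[b,d]\mapsto[b-\varepsilon,d+\varepsilon]$, $[b,d)\mapsto[b-\varepsilon,d-\varepsilon)$, $(b,d]\mapsto(b+\varepsilon,d+\varepsilon]$, and $(b,d)\mapsto(b+\varepsilon,d-\varepsilon)$. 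The last is empty iff $d-b\le 2\varepsilon$.

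With this corrected rule your Step 3 goes through unchanged. Each surviving bar moves each finite endpoint by exactly $\varepsilon$. The bars that vanish are open bars of length at most $2\varepsilon$, which can be matched to the diagonal at cost at most $\varepsilon$. The paper itself gives no argument and simply cites \cite[Corollary~7.2]{KimMemoli2021}, which is your stated fallback.
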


\section{Dynamic Bayesian Networks}
The following section summarizes standard definitions from~\cite{Koller-Friedman}.
In particular, we will establish the theoretical framework necessary to understand Dynamic Bayesian Networks. 

Assume we have a set of random variables ${X_1, \dots ,X_n}$. In addition to this, let $P(X_1, \dots , X_n)$ represent a joint conditional probability distribution (CPD) of our random variables.

\subsection{Bayesian Networks}

\begin{definition} \cite[Definition 3.1]{Koller-Friedman}
    A Bayesian Network Structure $G$ is a directed acyclic graph (DAG) where:
    \begin{enumerate}[(i)]
        \item \textbf{Nodes:} represent random variables $X_1, \dots , X_n$.
        \item \textbf{Directed edges:} represent dependencies between random variables.
        \item \textbf{Graph:} encodes a set of local independencies among our random variables.
    \end{enumerate}
\end{definition}

\begin{example}
    Suppose we have a set of random variables $\mathcal{X} = \{X_1, \dots, X_5\}$, and suppose $(X_4 \not\perp X_1)$, $(X_5 \not\perp X_1), (X_4 \not\perp X_2)$, $(X_5 \not\perp X_2)$, $(X_5 \not\perp X_3)$ is an exhaustive list of all dependencies in our distribution. We may create the associated Bayesian Network Structure in Figure \ref{Fig:Bayesian-CPD}.
\end{example}

\begin{figure}[H]
    \begin{center}
        \tikzset{every picture/.style={line width=0.75pt}} 
        
        \begin{tikzpicture}[x=0.75pt,y=0.75pt,yscale=-0.5,xscale=0.5]
        
        \draw    (320.5,68) -- (418.5,239) ;
        \draw    (320.5,68) -- (222.5,239) ;
        \draw    (222.5,93) -- (222.5,214) ;
        \draw    (418.5,93) -- (418.5,214) ;
        \draw    (222.5,68) -- (317.28,150.69) -- (418.5,239) ;
        \draw  [fill={rgb, 255:red, 212; green, 212; blue, 212 }  ,fill opacity=1 ] (197.5,68) .. controls (197.5,54.19) and (208.69,43) .. (222.5,43) .. controls (236.31,43) and (247.5,54.19) .. (247.5,68) .. controls (247.5,81.81) and (236.31,93) .. (222.5,93) .. controls (208.69,93) and (197.5,81.81) .. (197.5,68) -- cycle ;
        \draw  [fill={rgb, 255:red, 212; green, 212; blue, 212 }  ,fill opacity=1 ] (197.5,239) .. controls (197.5,225.19) and (208.69,214) .. (222.5,214) .. controls (236.31,214) and (247.5,225.19) .. (247.5,239) .. controls (247.5,252.81) and (236.31,264) .. (222.5,264) .. controls (208.69,264) and (197.5,252.81) .. (197.5,239) -- cycle ;
        \draw  [fill={rgb, 255:red, 212; green, 212; blue, 212 }  ,fill opacity=1 ] (393.5,239) .. controls (393.5,225.19) and (404.69,214) .. (418.5,214) .. controls (432.31,214) and (443.5,225.19) .. (443.5,239) .. controls (443.5,252.81) and (432.31,264) .. (418.5,264) .. controls (404.69,264) and (393.5,252.81) .. (393.5,239) -- cycle ;
        \draw  [fill={rgb, 255:red, 212; green, 212; blue, 212 }  ,fill opacity=1 ] (393.5,68) .. controls (393.5,54.19) and (404.69,43) .. (418.5,43) .. controls (432.31,43) and (443.5,54.19) .. (443.5,68) .. controls (443.5,81.81) and (432.31,93) .. (418.5,93) .. controls (404.69,93) and (393.5,81.81) .. (393.5,68) -- cycle ;
        \draw  [fill={rgb, 255:red, 0; green, 0; blue, 0 }  ,fill opacity=1 ] (222.5,214) -- (220.2,201) -- (224.91,201.02) -- cycle ;
        \draw  [fill={rgb, 255:red, 0; green, 0; blue, 0 }  ,fill opacity=1 ] (418.5,214) -- (416.2,201) -- (420.91,201.02) -- cycle ;
        \draw  [fill={rgb, 255:red, 0; green, 0; blue, 0 }  ,fill opacity=1 ] (400.13,223.59) -- (389.26,216.1) -- (392.58,212.76) -- cycle ;
        \draw  [fill={rgb, 255:red, 212; green, 212; blue, 212 }  ,fill opacity=1 ] (295.5,68) .. controls (295.5,54.19) and (306.69,43) .. (320.5,43) .. controls (334.31,43) and (345.5,54.19) .. (345.5,68) .. controls (345.5,81.81) and (334.31,93) .. (320.5,93) .. controls (306.69,93) and (295.5,81.81) .. (295.5,68) -- cycle ;
        \draw  [fill={rgb, 255:red, 0; green, 0; blue, 0 }  ,fill opacity=1 ] (234.39,217.38) -- (239.72,205.31) -- (243.62,207.95) -- cycle ;
        \draw  [fill={rgb, 255:red, 0; green, 0; blue, 0 }  ,fill opacity=1 ] (405.93,217.82) -- (398.02,207.25) -- (402.23,205.14) -- cycle ;
        
        \draw (302.25,54.5) node [anchor=north west][inner sep=0.75pt]   [align=left] {$\displaystyle X_{2}$};
        \draw (215.25,58) node [anchor=north west][inner sep=0.75pt]   [align=left] {$ $};
        \draw (399.25,54.5) node [anchor=north west][inner sep=0.75pt]   [align=left] {$\displaystyle X_{3}$};
        \draw (202.25,54.5) node [anchor=north west][inner sep=0.75pt]   [align=left] {$\displaystyle X_{1}$};
        \draw (202.25,225.5) node [anchor=north west][inner sep=0.75pt]   [align=left] {$\displaystyle X_{4}$};
        \draw (399.75,225.5) node [anchor=north west][inner sep=0.75pt]   [align=left] {$\displaystyle X_{5}$};
        \end{tikzpicture} 
        \caption{Bayesian Network Representing our CPD} \label{Fig:Bayesian-CPD}
    \end{center}
\end{figure}

\begin{definition}
    Let $Pa^{G}_{X_i}$ denote the parents of $X_i$ in $G$ and $NonDescendants_{X_i}$ denote the variables in the graph that are not descendants of $X_i$.
    \begin{enumerate}[(i)]
        \item $G$ encodes the following set of conditional independence assumptions, called the local independencies, denoted $\mathcal{I}_{\ell}(G):$
        \item For each variable $X_i$: $(X_i \perp NonDescendants_{X_i} | Pa^{G}_{X_i}).$
    \end{enumerate}
\end{definition}
The second condition is the local Markov condition: each node is conditionally independent of all its non-descendants given its parents. This is the defining property that lets the joint distribution factor as $P(\mathcal{X}) = \prod_i P(X_i \mid \mathrm{Pa}^G_{X_i})$. (See \cite[Theorem~3.5]{Koller-Friedman}).

\begin{definition}
    Let \( G \) be a Bayesian Network (BN) Structure over the variables \( X_1, \ldots, X_n \). We say that a distribution \( P \) over the same space factorizes according to \( G \) if \( P \) can be expressed as a product:
    \[
    P(X_1, \ldots, X_n) = P(\mathcal{X}) = \prod_{i=1}^{n} P(X_i \mid \text{Pa}^G_{X_i}).
    \]

    This equation is called the chain rule for Bayesian networks.
\end{definition}

\begin{definition}
    A Bayesian network is a pair \( \mathcal{B} = (G, P) \), where \( P \) factorizes over \( G \), and where \( P \) is specified as a set of conditional probability distributions (CPDs) associated with the nodes of \( G \). The distribution \( P \) is often annotated as \( P_{\mathcal{B}} \).
\end{definition}

\begin{remark}
    Consider the following:
    \begin{enumerate}[(i)]
     \item \textit{Bayesian Networks are an efficient way of representing a joint distribution over random variables $\mathcal{X} = \{X_1, \dots, X_n \}$}
     \item \textit{Bayesian Networks allow for a graphical representation of a CPD.}
     \item \textit{It is important to note that the independencies present in a BN may not capture all independencies in a CPD}
    \end{enumerate}
\end{remark}

\subsection{Dynamic Bayesian Networks}

\begin{convention}
    We can now consider a temporal model representing our network
    \begin{enumerate}[(i)]
         \item \textit{We can now create a time increment \(\Delta t \) and a set of random variable sets \( \mathcal{X}^{(0)}, \mathcal{X}^{(1)}, \ldots \) where \( \mathcal{X}^{(k)} \) are the random variables that represent the system state at time \( k \cdot \Delta t \).}
         \item \textit{We use \( X_i^{(k)} \) to represent the instantiation of a random variable \( X_i \) at time \( k \) where \( X_i^{(k)} \in \mathcal{X}^{(k)} \).}
         \item \textit{\( P(\mathcal{X}^{(0)}, \mathcal{X}^{(1)}, \ldots, \mathcal{X}^{(K)}) \) with \( k = 0, \ldots, K \), often abbreviated as \( P(\mathcal{X}^{(0:K)})\), will represent the future trajectories of our system.}
    \end{enumerate}
\end{convention}

\begin{definition} \cite[Definition 6.1]{Koller-Friedman}
    A process over $\mathcal{X}$ means there is a sequence of random variable sets $\mathcal{X}^{(0)}, \mathcal{X}^{(1)}, \ldots ,$ where $\mathcal{X}^{(k)}$ is a copy of the variables in $\mathcal{X}$ at time $k$.
\end{definition}

\begin{definition}\cite[Definition 6.2]{Koller-Friedman}
    A 2-time-slice Bayesian network (2-TBN) for a process over \( \mathcal{X} \) is a conditional Bayesian network over \( \mathcal{X}' \) given \( \mathcal{X}_I \), where \( \mathcal{X}_I \subseteq \mathcal{X} \) is a set of interface variables (in red in Figure \ref{Fig: time-slices}).
\end{definition}

\begin{figure}
    \begin{centering}

        \tikzset{every picture/.style={line width=0.75pt}} 
        
        \begin{tikzpicture}[x=0.75pt,y=0.75pt,yscale=-0.6,xscale=0.6]
        
        \draw  [dash pattern={on 4.5pt off 4.5pt}]  (196.31,88.61) -- (498.86,190.86) ;
        \draw [shift={(500.75,191.5)}, rotate = 198.67] [fill={rgb, 255:red, 0; green, 0; blue, 0 }  ][line width=0.08]  [draw opacity=0] (12,-3) -- (0,0) -- (12,3) -- cycle    ;
        \draw    (196.31,88.61) -- (248.26,180.26) ;
        \draw [shift={(249.25,182)}, rotate = 240.45] [fill={rgb, 255:red, 0; green, 0; blue, 0 }  ][line width=0.08]  [draw opacity=0] (12,-3) -- (0,0) -- (12,3) -- cycle    ;
        \draw    (196.31,88.61) -- (145.72,180.25) ;
        \draw [shift={(144.75,182)}, rotate = 298.9] [fill={rgb, 255:red, 0; green, 0; blue, 0 }  ][line width=0.08]  [draw opacity=0] (12,-3) -- (0,0) -- (12,3) -- cycle    ;
        \draw    (135.11,104.22) -- (135.11,177.78) ;
        \draw [shift={(135.11,179.78)}, rotate = 270] [fill={rgb, 255:red, 0; green, 0; blue, 0 }  ][line width=0.08]  [draw opacity=0] (12,-3) -- (0,0) -- (12,3) -- cycle    ;
        \draw    (257.5,104.22) -- (257.5,177.78) ;
        \draw [shift={(257.5,179.78)}, rotate = 270] [fill={rgb, 255:red, 0; green, 0; blue, 0 }  ][line width=0.08]  [draw opacity=0] (12,-3) -- (0,0) -- (12,3) -- cycle    ;
        \draw    (135.11,88.61) -- (194.29,140.24) -- (242.8,186.62) ;
        \draw [shift={(244.25,188)}, rotate = 223.71] [fill={rgb, 255:red, 0; green, 0; blue, 0 }  ][line width=0.08]  [draw opacity=0] (12,-3) -- (0,0) -- (12,3) -- cycle    ;
        \draw  [fill={rgb, 255:red, 212; green, 212; blue, 212 }  ,fill opacity=1 ] (119.5,88.61) .. controls (119.5,79.99) and (126.49,73) .. (135.11,73) .. controls (143.73,73) and (150.72,79.99) .. (150.72,88.61) .. controls (150.72,97.23) and (143.73,104.22) .. (135.11,104.22) .. controls (126.49,104.22) and (119.5,97.23) .. (119.5,88.61) -- cycle ;
        \draw  [fill={rgb, 255:red, 212; green, 212; blue, 212 }  ,fill opacity=1 ] (119.5,195.39) .. controls (119.5,186.77) and (126.49,179.78) .. (135.11,179.78) .. controls (143.73,179.78) and (150.72,186.77) .. (150.72,195.39) .. controls (150.72,204.01) and (143.73,211) .. (135.11,211) .. controls (126.49,211) and (119.5,204.01) .. (119.5,195.39) -- cycle ;
        \draw  [fill={rgb, 255:red, 255; green, 59; blue, 85 }  ,fill opacity=1 ] (241.89,195.39) .. controls (241.89,186.77) and (248.88,179.78) .. (257.5,179.78) .. controls (266.12,179.78) and (273.11,186.77) .. (273.11,195.39) .. controls (273.11,204.01) and (266.12,211) .. (257.5,211) .. controls (248.88,211) and (241.89,204.01) .. (241.89,195.39) -- cycle ;
        \draw  [fill={rgb, 255:red, 255; green, 59; blue, 85 }  ,fill opacity=1 ] (241.89,88.61) .. controls (241.89,79.99) and (248.88,73) .. (257.5,73) .. controls (266.12,73) and (273.11,79.99) .. (273.11,88.61) .. controls (273.11,97.23) and (266.12,104.22) .. (257.5,104.22) .. controls (248.88,104.22) and (241.89,97.23) .. (241.89,88.61) -- cycle ;
        \draw  [fill={rgb, 255:red, 255; green, 59; blue, 85 }  ,fill opacity=1 ] (180.69,88.61) .. controls (180.69,79.99) and (187.68,73) .. (196.31,73) .. controls (204.93,73) and (211.92,79.99) .. (211.92,88.61) .. controls (211.92,97.23) and (204.93,104.22) .. (196.31,104.22) .. controls (187.68,104.22) and (180.69,97.23) .. (180.69,88.61) -- cycle ;
        \draw    (455.31,88.61) -- (508.74,180.27) ;
        \draw [shift={(509.75,182)}, rotate = 239.76] [fill={rgb, 255:red, 0; green, 0; blue, 0 }  ][line width=0.08]  [draw opacity=0] (12,-3) -- (0,0) -- (12,3) -- cycle    ;
        \draw    (455.31,88.61) -- (404.22,180.25) ;
        \draw [shift={(403.25,182)}, rotate = 299.14] [fill={rgb, 255:red, 0; green, 0; blue, 0 }  ][line width=0.08]  [draw opacity=0] (12,-3) -- (0,0) -- (12,3) -- cycle    ;
        \draw    (394.11,104.22) -- (394.11,177.78) ;
        \draw [shift={(394.11,179.78)}, rotate = 270] [fill={rgb, 255:red, 0; green, 0; blue, 0 }  ][line width=0.08]  [draw opacity=0] (12,-3) -- (0,0) -- (12,3) -- cycle    ;
        \draw    (516.5,104.22) -- (516.5,177.78) ;
        \draw [shift={(516.5,179.78)}, rotate = 270] [fill={rgb, 255:red, 0; green, 0; blue, 0 }  ][line width=0.08]  [draw opacity=0] (12,-3) -- (0,0) -- (12,3) -- cycle    ;
        \draw    (394.11,88.61) -- (453.29,140.24) -- (502.77,185.16) ;
        \draw [shift={(504.25,186.5)}, rotate = 222.23] [fill={rgb, 255:red, 0; green, 0; blue, 0 }  ][line width=0.08]  [draw opacity=0] (12,-3) -- (0,0) -- (12,3) -- cycle    ;
        \draw  [fill={rgb, 255:red, 212; green, 212; blue, 212 }  ,fill opacity=1 ] (378.5,88.61) .. controls (378.5,79.99) and (385.49,73) .. (394.11,73) .. controls (402.73,73) and (409.72,79.99) .. (409.72,88.61) .. controls (409.72,97.23) and (402.73,104.22) .. (394.11,104.22) .. controls (385.49,104.22) and (378.5,97.23) .. (378.5,88.61) -- cycle ;
        \draw  [fill={rgb, 255:red, 212; green, 212; blue, 212 }  ,fill opacity=1 ] (378.5,195.39) .. controls (378.5,186.77) and (385.49,179.78) .. (394.11,179.78) .. controls (402.73,179.78) and (409.72,186.77) .. (409.72,195.39) .. controls (409.72,204.01) and (402.73,211) .. (394.11,211) .. controls (385.49,211) and (378.5,204.01) .. (378.5,195.39) -- cycle ;
        \draw  [fill={rgb, 255:red, 212; green, 212; blue, 212 }  ,fill opacity=1 ] (500.89,195.39) .. controls (500.89,186.77) and (507.88,179.78) .. (516.5,179.78) .. controls (525.12,179.78) and (532.11,186.77) .. (532.11,195.39) .. controls (532.11,204.01) and (525.12,211) .. (516.5,211) .. controls (507.88,211) and (500.89,204.01) .. (500.89,195.39) -- cycle ;
        \draw  [fill={rgb, 255:red, 212; green, 212; blue, 212 }  ,fill opacity=1 ] (500.89,88.61) .. controls (500.89,79.99) and (507.88,73) .. (516.5,73) .. controls (525.12,73) and (532.11,79.99) .. (532.11,88.61) .. controls (532.11,97.23) and (525.12,104.22) .. (516.5,104.22) .. controls (507.88,104.22) and (500.89,97.23) .. (500.89,88.61) -- cycle ;
        \draw  [fill={rgb, 255:red, 212; green, 212; blue, 212 }  ,fill opacity=1 ] (439.69,88.61) .. controls (439.69,79.99) and (446.68,73) .. (455.31,73) .. controls (463.93,73) and (470.92,79.99) .. (470.92,88.61) .. controls (470.92,97.23) and (463.93,104.22) .. (455.31,104.22) .. controls (446.68,104.22) and (439.69,97.23) .. (439.69,88.61) -- cycle ;
        \draw    (325,38) .. controls (326.67,39.67) and (326.67,41.33) .. (325,43) .. controls (323.33,44.67) and (323.33,46.33) .. (325,48) .. controls (326.67,49.67) and (326.67,51.33) .. (325,53) .. controls (323.33,54.67) and (323.33,56.33) .. (325,58) .. controls (326.67,59.67) and (326.67,61.33) .. (325,63) .. controls (323.33,64.67) and (323.33,66.33) .. (325,68) .. controls (326.67,69.67) and (326.67,71.33) .. (325,73) .. controls (323.33,74.67) and (323.33,76.33) .. (325,78) .. controls (326.67,79.67) and (326.67,81.33) .. (325,83) .. controls (323.33,84.67) and (323.33,86.33) .. (325,88) .. controls (326.67,89.67) and (326.67,91.33) .. (325,93) .. controls (323.33,94.67) and (323.33,96.33) .. (325,98) .. controls (326.67,99.67) and (326.67,101.33) .. (325,103) .. controls (323.33,104.67) and (323.33,106.33) .. (325,108) .. controls (326.67,109.67) and (326.67,111.33) .. (325,113) .. controls (323.33,114.67) and (323.33,116.33) .. (325,118) .. controls (326.67,119.67) and (326.67,121.33) .. (325,123) .. controls (323.33,124.67) and (323.33,126.33) .. (325,128) .. controls (326.67,129.67) and (326.67,131.33) .. (325,133) .. controls (323.33,134.67) and (323.33,136.33) .. (325,138) .. controls (326.67,139.67) and (326.67,141.33) .. (325,143) .. controls (323.33,144.67) and (323.33,146.33) .. (325,148) .. controls (326.67,149.67) and (326.67,151.33) .. (325,153) .. controls (323.33,154.67) and (323.33,156.33) .. (325,158) .. controls (326.67,159.67) and (326.67,161.33) .. (325,163) .. controls (323.33,164.67) and (323.33,166.33) .. (325,168) .. controls (326.67,169.67) and (326.67,171.33) .. (325,173) .. controls (323.33,174.67) and (323.33,176.33) .. (325,178) .. controls (326.67,179.67) and (326.67,181.33) .. (325,183) .. controls (323.33,184.67) and (323.33,186.33) .. (325,188) .. controls (326.67,189.67) and (326.67,191.33) .. (325,193) .. controls (323.33,194.67) and (323.33,196.33) .. (325,198) .. controls (326.67,199.67) and (326.67,201.33) .. (325,203) .. controls (323.33,204.67) and (323.33,206.33) .. (325,208) .. controls (326.67,209.67) and (326.67,211.33) .. (325,213) .. controls (323.33,214.67) and (323.33,216.33) .. (325,218) .. controls (326.67,219.67) and (326.67,221.33) .. (325,223) .. controls (323.33,224.67) and (323.33,226.33) .. (325,228) .. controls (326.67,229.67) and (326.67,231.33) .. (325,233) .. controls (323.33,234.67) and (323.33,236.33) .. (325,238) .. controls (326.67,239.67) and (326.67,241.33) .. (325,243) .. controls (323.33,244.67) and (323.33,246.33) .. (325,248) -- (325,250) -- (325,250) ;
        \draw  [dash pattern={on 4.5pt off 4.5pt}]  (273.11,88.61) -- (376.5,88.61) ;
        \draw [shift={(378.5,88.61)}, rotate = 180] [fill={rgb, 255:red, 0; green, 0; blue, 0 }  ][line width=0.08]  [draw opacity=0] (12,-3) -- (0,0) -- (12,3) -- cycle    ;
        \draw  [dash pattern={on 4.5pt off 4.5pt}]  (273.11,195.39) -- (376.5,195.39) ;
        \draw [shift={(378.5,195.39)}, rotate = 180] [fill={rgb, 255:red, 0; green, 0; blue, 0 }  ][line width=0.08]  [draw opacity=0] (12,-3) -- (0,0) -- (12,3) -- cycle    ;
        
        \draw (186.65,79.11) node [anchor=north west][inner sep=0.75pt]  [xscale=0.7,yscale=0.7] [align=left] {$\displaystyle X_{2}$};
        \draw (128.52,78.8) node [anchor=north west][inner sep=0.75pt]  [xscale=0.7,yscale=0.7] [align=left] {$ $};
        \draw (248.22,79.42) node [anchor=north west][inner sep=0.75pt]  [xscale=0.7,yscale=0.7] [align=left] {$\displaystyle X_{3}$};
        \draw (125.2,78.49) node [anchor=north west][inner sep=0.75pt]  [xscale=0.7,yscale=0.7] [align=left] {$\displaystyle X_{1}$};
        \draw (125.2,185.89) node [anchor=north west][inner sep=0.75pt]  [xscale=0.7,yscale=0.7] [align=left] {$\displaystyle X_{4}$};
        \draw (247.28,185.89) node [anchor=north west][inner sep=0.75pt]  [xscale=0.7,yscale=0.7] [align=left] {$\displaystyle X_{5}$};
        \draw (446.65,79.11) node [anchor=north west][inner sep=0.75pt]  [xscale=0.7,yscale=0.7] [align=left] {$\displaystyle X_{2}^{'}$};
        \draw (387.52,78.8) node [anchor=north west][inner sep=0.75pt]  [xscale=0.7,yscale=0.7] [align=left] {$ $};
        \draw (507.22,79.42) node [anchor=north west][inner sep=0.75pt]  [xscale=0.7,yscale=0.7] [align=left] {$\displaystyle X_{3}^{'}$};
        \draw (384.2,78.49) node [anchor=north west][inner sep=0.75pt]  [xscale=0.7,yscale=0.7] [align=left] {$\displaystyle X_{1}^{'}$};
        \draw (384.2,185.89) node [anchor=north west][inner sep=0.75pt]  [xscale=0.7,yscale=0.7] [align=left] {$\displaystyle X_{4}^{'}$};
        \draw (506.28,185.89) node [anchor=north west][inner sep=0.75pt]  [xscale=0.7,yscale=0.7] [align=left] {$\displaystyle X_{5}^{'}$};
        \draw (190.8,220.6) node [anchor=north west][inner sep=0.75pt]  [xscale=0.7,yscale=0.7] [align=left] {$\displaystyle \mathcal{X}$};
        \draw (440.4,222.2) node [anchor=north west][inner sep=0.75pt]  [xscale=0.7,yscale=0.7] [align=left] {$\displaystyle \mathcal{X} '$};
    
        \end{tikzpicture}
    \caption{Two time slices of a DBN with interface variables $X_2$, $X_3$ and $X_5$ (in red).} 
    \label{Fig: time-slices}

    \end{centering}
\end{figure}

\begin{definition}
    A dynamic Bayesian network (DBN) is a pair \( \langle \mathcal{B}_0, \mathcal{B}_{\rightarrow} \rangle \), where \( \mathcal{B}_0 \) is a Bayesian network over \( \mathcal{X}^{(0)} \), representing the initial distribution over states, and \( \mathcal{B}_{\rightarrow} \) is a 2-TBN for the process. For any desired time span \( K \geq 0 \), the distribution over \( \mathcal{X}^{(0:K)} \) is defined as an unrolled Bayesian network, where, for any \( i = 1, \ldots, n \):

    \begin{enumerate}[(i)]
     \item \textit{The structure and CPDs of \( X_i^{(0)} \) are the same as those for \( X_i \) in \( \mathcal{B}_0 \).}
     \item \textit{The structure and CPD of \( X_i^{(k)} \) for \( k > 0 \) are the same as those for \( X_i' \) in \( \mathcal{B}_{\rightarrow} \).}
    \end{enumerate}
\end{definition}

\section{Determining Edge Weights}

In this section, we define an edge weight function between nodes
in a Bayesian Network following \cite{diameter-LeonelliSmith2025}. This definition is associated with the diameter of a stochastic matrix and, more generally, can be described as the maximum diameter of a stochastic sub-Conditional Probability Table representing the probability distribution of a node conditioned on some specification of its parents.

\begin{definition}\cite[Definition 1]{diameter-LeonelliSmith2025}
    Let $p$ and $p'$ be two probability mass functions over the same sample space $\mathcal{X}$. The \textit{total variation distance} between $p$ and $p'$ is
    \[
    d_V(p,p') = \frac{1}{2} \sum\limits_{x \in \mathcal{X}} |p(x)-p'(x)|
    \]
\end{definition}

\begin{remark}
    The total variation distance is a way of gauging how different two probability distributions are.
\end{remark}

\begin{definition}\cite[Definition 2]{diameter-LeonelliSmith2025}
    The \textit{upper diameter} of a $n \times k$ stochastic matrix  $P$ with rows $p_1,\cdots,p_n$ denoted as $d^+(P)$, is
    \[
    d^+(P) =\max\limits_{\substack{i,j\in[n] \\\\ i \neq j}}d_V(p_i,p_j)
    \]

    The \textit{lower diameter} is
    \[
    d^-(P) = \min\limits_{\substack{i,j\in[n] \\\\ i \neq j}} d_V(p_i, p_j)
    \]

\end{definition}

\begin{remark}
    These equations determine either the maximum or the minimum total variation distance between the rows of a Conditional Probability Table (CPT).
\end{remark}

\begin{proposition}\cite[Proposition 1]{diameter-LeonelliSmith2025} \label{Prop:mutual-info}
    For two categorical random variables $X$ and $Y$, let $MI(X,Y)$ be their mutual information. It holds that   
    \[
    d^+(P_{Y|X})=0 \iff Y \perp X \iff MI(X,Y)=0.
    \]
\end{proposition}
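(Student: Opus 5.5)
We prove the chain of equivalences through the joint distribution of $(X,Y)$. Write $P_{Y|X}$ for the stochastic matrix whose rows $p_x(\cdot) = P(Y=\cdot \mid X=x)$ are indexed by the values $x$ of $X$ with $P(X=x)>0$. Throughout we use the convention that $P_{Y|X}$ is formed only from these rows. Rows with $P(X=x)=0$ are unconstrained by the joint distribution and would otherwise break the statement.

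\textbf{First equivalence.} By definition of $d^+$ as a maximum of total variation distances, and since $d_V$ is a metric on probability mass functions,
\[
d^+(P_{Y|X})=0 \iff d_V(p_x,p_{x'})=0 \text{ for all } x,x' \iff p_x = p_{x'} \text{ for all } x,x'.
\]
So the condition says that every row equals one common mass function $q$.

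Suppose all rows equal $q$. Marginalizing gives
\[
P(Y=y)=\sum_x P(X=x)\,q(y)=q(y),
\]
hence $P(X=x,Y=y)=P(X=x)P(Y=y)$, which is $Y\perp X$. Conversely, if $Y\perp X$, then $p_x(y)=P(Y=y)$ for every $x$ with positive mass. All rows then coincide and $d^+=0$. In the degenerate case where $X$ has only one value of positive mass, $d^+$ is a maximum over an empty set, which we set to $0$, and $X$ is then trivially independent of $Y$.

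\textbf{Second equivalence.} This is the standard fact that mutual information is a Kullback--Leibler divergence:
\[
MI(X,Y)=D_{\mathrm{KL}}\big(P_{XY}\,\|\,P_X\otimes P_Y\big).
\]
By Gibbs' inequality, proved via Jensen applied to the strictly convex function $-\log$, this divergence is nonnegative. It vanishes exactly when $P_{XY}=P_X\otimes P_Y$, which is the definition of independence.

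\textbf{Main subtlety.} The only real obstacle is the treatment of zero-probability parent values in the definition of the CPT. If arbitrary rows for such $x$ were allowed, $d^+$ could be positive while $X$ and $Y$ are still independent. The convention fixed at the start, keeping only rows with $P(X=x)>0$, resolves this; with it in place, the remaining steps are routine.
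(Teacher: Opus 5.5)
The paper does not prove this proposition; it imports it from Leonelli and Smith, so there is no internal argument to compare against, and your proof has to stand on its own. It does. Because $d_V$ is a metric, $d^+=0$ forces all rows to coincide. Marginalizing a common row $q$ gives $P(Y=\cdot)=q$ and hence the product form, and the reverse direction is immediate. The mutual-information equivalence is the standard KL/Gibbs argument.

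One small point on the equality case of Gibbs' inequality. Strict convexity of $-\log$ alone only makes $P_{XY}/(P_X\otimes P_Y)$ constant on the support of $P_{XY}$. You also need tightness of $\sum_{P_{XY}(x,y)>0}P_X(x)P_Y(y)\le 1$. With $X=Y$ uniform on two values, the ratio is constant ($=2$) on the support but $MI=\log 2$. This is textbook material, so it is not a gap.

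Your version does add something the citation lacks: the positivity caveat, which genuinely corrects the statement as printed. In a (dynamic) Bayesian network the CPT is specified for every parent value. Rows indexed by values with $P(X=x)=0$ are unconstrained by the joint law. So without your restriction, or a strict-positivity assumption, the implication $Y\perp X\Rightarrow d^+(P_{Y|X})=0$ can fail.

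The same caveat carries over to Remark~\ref{Rmk:indepence-interpretation} ($\delta_{ji}=0\iff X_i\perp X_j\mid X_{\Pi_i\setminus j}$). Parent configurations $\mathbf{x}$ of probability zero still enter the maximum defining $\delta_{ji}$, but they play no role in the conditional independence statement.
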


\begin{remark}
    We can see intuitively from this proposition that the closer the diameter is to zero the more independent our two variables are. 
\end{remark}

In combination with the total variation distance and the upper diameter of a stochastic matrix, we can define a new measure of edge strength between nodes in a Bayesian Network (BN).

Let $P_i$ be the CPT of some node $X_i$ and $P_{i|\textbf{x}}$ the sub-CPT of $P_i$ including only the rows specified by $\textbf{x}$

\begin{definition}\label{Def:edge-strength}\cite[Definition 3]{diameter-LeonelliSmith2025}
    The edge strength of edge $(j,i)$ in a BN is defined as
    \[\delta_{ji} = \max_{\mathbf{x} \in \mathcal{X}{\Pi_i \backslash j}} d^+(P_{i|\mathbf{x}}).\]
\end{definition}

\begin{remark}\label{Rmk:indepence-interpretation}
The edge strength $\delta_{ji}$ has a precise conditional independence
interpretation~\cite[Proposition~4]{diameter-LeonelliSmith2025}: for $X_j \in \mathrm{Pa}^G_{X_i}$,
\[
  \delta_{ji} = 0
  \;\Longleftrightarrow\;
  X_i \perp X_j \mid X_{\Pi_i \setminus j},
\]

where $\Pi_i \setminus j$ denotes the remaining parents of $X_i$ in $G$.
That is, $\delta_{ji} = 0$ if and only if $X_j$ contributes no additional
information about $X_i$ once all other parents are known.

This gives the threshold $\eta$ in Definition~\ref{Def:dynamic-bayesian} a natural meaning:
an edge $\{X_j, X_i\}$ is included in $E_\eta(t)$ precisely when $X_j$
has a \emph{non-negligible} conditional influence on $X_i$ beyond what
the remaining parents already explain.  Edges with $\delta_{ji} \leq \eta$
are excluded because they represent parent--child pairs that are
approximately conditionally independent given the rest.
\end{remark}

Note that this is strictly stronger than Proposition \ref{Prop:mutual-info}, which characterizes $d^+(P_{Y|X}) = 0$, that is, conditional independence of $X_i$ from $X_j$ given a fixed configuration of the remaining parents $X_{\Pi_i \setminus j}$. Remark \ref{Rmk:indepence-interpretation} asserts that $\delta_{ji} = 0$ if and only if this conditional independence holds uniformly across all parent configurations.

\section{Stable Distance Persistent Homology for Dynamic Bayesian Networks}\label{Sec:stable-distance}

\subsection{Dynamic Bayesian Graphs}

\subsubsection{Modifications to Edge Sets}

Before we can use the methods of Stable Distance Persistent homology outlined in \cite{KimMemoli2021}, we must first fit the notion of a DBN to a Dynamic Graph. Since a DBN is considered to be a process that copies a BN over a discrete number of intervals in a given timespan, this generally corresponds to the edge structure of the network remaining constant over time. This corresponds to a scenario where edges are never born and never die. If we were to consider performing SDPH on this kind of graph, this would correspond to what is called a saturated and constant DG representation of a DBN. In other words the representation would have $\mathscr{G}_\mathcal{X} = (V_\mathcal{X}(\cdot), E_\mathcal{X}(\cdot))$ where $V_\mathcal{X}\mathcal{}(\cdot) \equiv \mathcal{X}$ and  $E_\mathcal{X}(\cdot)$ is constant. In this model, clustering barcodes would end up being trivial and no evolving dynamic signatures would be obtained. 

In order to combat this issue, we can reorient the notion of an edge with respect to a DBN. As established in previous sections, we can define an edge strength $w_{(\cdot,\cdot)}$ between nodes in a DBN. When creating a DBN, the edge set is defined as a pre-established and fixed set of directed edges $\overrightarrow{E}$. This set includes instantiations of the edge set in a BN at each time interval $\Delta t$ along with the inter-time-slice edges representing conditional dependence across times. This edge set can be made more sensible if we allow for a time index that \textit{excludes} the inter-time-slice edges graphically.

So let $\overrightarrow{E}(\cdot): \mathbb{R}_+ \to  \overrightarrow{E}$ be a function of time to our edge set such that if $k_1\Delta t < t' \leq k_2\Delta t$ $t' \mapsto \overrightarrow{E}(k_1)$ where 
\begin{align}
\overrightarrow{E}(t') \coloneq \left \{ (Y^{(k_2)}, X^{(k_2)}  )| Y^{(k_2)} \in \textnormal{Pa}_{X^{(k_2)}}^G \textnormal{ for } (Y^{(k_2)}, X^{(k_2)}  )\in \mathcal{X}^{(k_2)} \times \mathcal{X}^{(k_2)} \right \}.
\end{align}

Furthermore, we can alter this definition to only include edges between children and parents that have an edge weight above some tolerance $\eta$. In essence, this allows us to only have edges in our model whose parents have a significant enough impact on the state of their children,
\begin{align}
\overrightarrow{E}_\eta(t') \coloneq \left \{ (Y^{(k_2)}, X^{(k_2)}  ) \in \overrightarrow{E}(t') | w(Y^{(k_2)}, X^{(k_2)}) > \eta \right \},
\end{align}
where for each time $k_2$, we write $w(Y^{(k_2)}, X^{(k_2)}) \coloneqq \delta^{(k_2)}_{YX}$ for the edge strength between $Y$ and $X$ computed from the CPT of $\mathcal{B}_\rightarrow$ at time slice $k_2$.

Furthermore, since a DBN is inherently directed and acyclic clustering would be trivial if we were to only consider our model edges to be directed. So, in order to obtain a more rich clustering structure, we will allow our new edge set to be undirected. 
\begin{align}
    E_\eta(t') \coloneq \left \{ \overrightarrow{E}_\eta(t') \textnormal{ where edges are undirected} \right \}
\end{align}
In essence, each cluster now represents a group of random variables whose probabilistic behavior is tightly coupled: any two variables in the same cluster are linked by a path of edges each exceeding threshold $\eta$, making the cluster a natural unit for joint inference.

\subsubsection{Dynamic Bayesian Graphs}

Now that we have established an edge set that would induce a rich clustering structure, we will define a \textit{Dynamic Bayesian Graph} (DBG).

\begin{convention}\label{Conv:node-id}
[Node Identification] In a DBN $\langle \mathcal{B}_0, \mathcal{B}_\rightarrow \rangle$, the same set of random variables $\mathcal{X} = {X_1, \ldots, X_n}$ recurs at every time step. Although $X_i^{(t)}$ and $X_i^{(t')}$ may carry different CPTs when $t \neq t'$, they represent the same variable in the graph structure. We therefore identify $X_i^{(t)}$ with $X_i$ as a node, writing $\mathcal{X}_\rightarrow^{(t)} = \mathcal{X}$ for all $t \in \mathbb{R}+$. Consequently, the vertex set of the DBG is always the fixed set $\mathcal{X}$, and all time-dependence is encoded in the edge weight function $E_\eta(\cdot)$ rather than in the node set.
\end{convention}

\begin{definition}\label{Def:dynamic-bayesian}[Dynamic Bayesian Graph]
Consider a DBN $\langle \mathcal{B}_0, \mathcal{B}_{\to} \rangle$.
By Convention~\ref{Conv:node-id}, the vertex set is constant: $\mathcal{X}_{\to}^{(t)} \equiv \mathcal{X}$ for all $t \in \mathbb{R}_+$.
A \textbf{dynamic Bayesian graph} $\mathscr{G}_{\mathcal{X}} = (\mathcal{X}_{\to}^{(\cdot)}, E_\eta(\cdot))$ over $\mathcal{X}$ is the DG with constant vertex set $\mathcal{X}$, where the edge function $E_\eta(\cdot)$ satisfies the conditions below.

    \begin{enumerate}[(i)]
        \item (Self-loops) For all $t \in \mathbb{R}_+$ and for all $X\in \mathcal{X}_{\to}^{(t)}, (X, X) \in E_\eta(t)$.
Self-loops are included by definition independently of the threshold $\eta$; the edge-strength formula $\delta_{ji}$ is defined only for distinct parent-child pairs $j \neq i$ and does not apply to $(X,X)$.

        \item (Tameness) the set $\textnormal{crit}(\mathscr{G}_\mathcal{X})$ is locally finite.
        \item (Lifespan of vertices) for every $X \in \mathcal{X}$, the set $I_\mathcal{X} \coloneq \{ t \in \mathbb{R}_+ | X \in \mathcal{X_{\to}}^{(t)} \}$, said to be the \textbf{lifespan} of $X$, is a non-empty closed interval. 
        \item (Comparability) for every $t \in \mathbb{R}_+$, it holds that
        \begin{align}
            \mathcal{X_{\to}}^{(t - \varepsilon)} \subset \mathcal{X_{\to}}^{(t)} \supset \mathcal{X_{\to}}^{(t + \varepsilon)} \quad \textnormal{and} \quad E_\eta(t- \varepsilon) \subset E_\eta(t) \supset E_\eta(t+ \varepsilon)
        \end{align}
        for small $\varepsilon > 0$. We can rewrite this phrase as $\mathscr{G}_\mathcal{X}(t- \varepsilon) \subset \mathscr{G}_\mathcal{X}(t) \supset \mathscr{G}_\mathcal{X}(t + \varepsilon)$)
    \end{enumerate}
\end{definition}

\begin{definition}\label{Def:time-edge-strength}[Time-Indexed Edge Strength]  For a DBN $\langle \mathcal{B}_0, \mathcal{B}_{\to} \rangle$, let $\delta^{(k)}_{YX}$ denote the edge strength $\delta_{YX}$ computed from the CPT of $\mathcal{B}_\to$ at time slice $k$.
    
\end{definition}

\begin{proposition}\label{Prop:dynamic-Bayesian-dynamic-graph}
    A dynamic Bayesian graph is a dynamic graph.
\end{proposition}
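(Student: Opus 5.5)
We verify the four axioms of Definition~\ref{Def:Dynamic-graph} directly for $\mathscr{G}_{\mathcal{X}} = (\mathcal{X}_{\to}^{(\cdot)}, E_\eta(\cdot))$. The proof first makes the edge function explicit: for $t$ in an open slice $((k-1)\Delta t, k\Delta t)$, $E_\eta(t)$ is the undirected, thresholded edge set built from the CPTs of slice $k$, together with all self-loops. At each transition time $t^* = k\Delta t$ we adopt the maximum convention,
\[
E_\eta(k\Delta t) \coloneqq E_\eta^{(k)} \cup E_\eta^{(k+1)},
\]
that is, the union of the left and right limits. Every edge set is a subset of $\mathcal{P}_2(\mathcal{X})$ plus diagonal pairs, so $E_\eta(\cdot)$ maps into $\mathcal{P}(\mathcal{P}_2(V_{\mathcal{X}}(\cdot)))$ as required. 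To place the object on all of $\mathbf{R}$ rather than $\mathbb{R}_+$, we extend by the constant value $E^{(0)}_\eta$ (built from $\mathcal{B}_0$) for $t\le 0$. This extension introduces no new critical points except possibly at $0$.

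\textbf{Self-loops and lifespan.} Self-loops hold by construction, since $(X,X)$ is adjoined at every $t$ regardless of $\eta$. By Convention~\ref{Conv:node-id} the vertex function is constant $\equiv\mathcal{X}$, so each lifespan $I_X$ is the whole line (or the closed half-line $[0,\infty)$ if one insists on $\mathbb{R}_+$). This is a nonempty closed interval.

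\textbf{Tameness.} On each open slice $((k-1)\Delta t, k\Delta t)$ the graph is constant, because the CPTs of slice $k$, and hence the strengths $\delta^{(k)}_{YX}$ and the set $\{\delta^{(k)}_{YX}>\eta\}$, do not vary. Therefore $\mathrm{crit}(\mathscr{G}_{\mathcal{X}})\subseteq \{k\Delta t : k\in\mathbb{Z}_{\ge 0}\}$. This is a discrete set with spacing $\Delta t>0$, so it is locally finite.

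\textbf{Comparability.} This is the only axiom with content, and it is where the maximum convention is essential. For $t$ not of the form $k\Delta t$, choose $\varepsilon<\min(t-(k-1)\Delta t,\ k\Delta t - t)$. Then $\mathscr{G}_{\mathcal{X}}(t\pm\varepsilon)=\mathscr{G}_{\mathcal{X}}(t)$, and the inclusions are equalities. For $t=k\Delta t$ and $0<\varepsilon<\Delta t$, we have $E_\eta(t-\varepsilon)=E^{(k)}_\eta$ and $E_\eta(t+\varepsilon)=E^{(k+1)}_\eta$. Both are contained in the union $E_\eta(t)$, and the vertex inclusions are trivial equalities.

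The main point to address is that this is forced. Any assignment $E_\eta(k\Delta t)=F$ satisfying comparability must contain $E^{(k)}_\eta\cup E^{(k+1)}_\eta$. Moreover, $F$ cannot contain any extra edge $e$, because $e$ would then appear only at the isolated instant $k\Delta t$. Such an edge has no justification from any CPT, and it would also create spurious critical behavior. This justifies the convention, although only the containment is needed for the proposition. It then follows that $\mathscr{G}_{\mathcal{X}}(t-\varepsilon)\subset\mathscr{G}_{\mathcal{X}}(t)\supset\mathscr{G}_{\mathcal{X}}(t+\varepsilon)$ for all $t$. Together with the three previous axioms, this shows that $\mathscr{G}_{\mathcal{X}}$ is a dynamic graph; it is saturated because its vertex set is constant.
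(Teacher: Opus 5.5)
Your proof is correct and follows essentially the same route as the paper's: you check the four axioms, get tameness from the piecewise-constant edge sets with $\mathrm{crit}(\mathscr{G}_{\mathcal{X}})\subseteq\{k\Delta t\}$, and get comparability at each $k\Delta t$ from the maximum (union) convention. Your extension to all of $\mathbb{R}$ is a harmless refinement, provided you impose it only for $t<0$ rather than $t\le 0$, since otherwise it clashes with the union at $t=0$. Note also that comparability only forces $E_\eta(k\Delta t)\supseteq E^{(k)}_\eta\cup E^{(k+1)}_\eta$, so excluding extra edges is a minimality choice rather than something forced; the paper's ``unique value'' remark makes the same overstatement, and the proposition does not need it.
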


\begin{proof}
We verify the four conditions of Definition \ref{Def:Dynamic-graph} for the DBG
$\mathscr{G}_{\mathcal{X}} = (\mathcal{X}_\rightarrow^{(\cdot)},
E_\eta(\cdot))$.

\medskip
\noindent\textbf{(i) Self-loops.}
By Definition \ref{Def:dynamic-bayesian}(i), $(X, X) \in E_\eta(t)$ for every
$X \in \mathcal{X}$ and every $t \in \BR_+$.  This is
the self-loop condition of Definition \ref{Def:Dynamic-graph}(i).

\medskip
\noindent\textbf{(ii) Tameness.}
Since $E_\eta(\cdot)$ is piecewise constant, a time $t$ can be critical
for $\mathscr{G}_{\mathcal{X}}$ only if $t \in \{k\Delta t : k \in \BN\}$.
Hence
\[
  \mathrm{crit}(\mathscr{G}_{\mathcal{X}})
  \;\subseteq\;
  \{k\Delta t : k \in \BN\},
\]
which is locally finite (at most one critical time per interval of length
$\Delta t$).

\medskip
\noindent\textbf{(iii) Lifespan of vertices.}
Since $\mathcal{X}_\rightarrow^{(t)} = \mathcal{X}$ for all $t \in \BR_+$,
the lifespan of every node $X \in \mathcal{X}$ is
$I_X = \BR_+$, which is a non-empty closed interval.

\medskip
\noindent\textbf{(iv) Comparability.}
We must show that for every $t \in \BR_+$ and all sufficiently small
$\varepsilon > 0$,
\[
  E_\eta(t - \varepsilon) \;\subset\; E_\eta(t) \;\supset\; E_\eta(t + \varepsilon).
\]
(Vertex comparability is trivial: all three vertex sets equal
$\mathcal{X}$.)

\smallskip
\noindent\textit{Between critical times.}
If $t$ is not a critical point of $\mathscr{G}_{\mathcal{X}}$, then
$t \in ((k-1)\Delta t, k\Delta t)$ for some $k$.  Taking
$\varepsilon < \min(t - (k-1)\Delta t,\; k\Delta t - t)$ gives
$E_\eta(t - \varepsilon) = E_\eta(t) = E_\eta(t + \varepsilon)$,
so the inclusions hold with equality.

\smallskip
\noindent\textit{At a critical time.}
Let $t^* = k\Delta t$ for some $k \in \BN$.  The left and right
limits of the edge set at $t^*$ are
\begin{align*}
  E_\eta^- &\;:=\; \lim_{\varepsilon \to 0} E_\eta(t^* - \varepsilon)
            \;=\; \bigl\{ \{Y,X\} : \delta^{(k-1)}_{YX} > \eta \bigr\}, \\[4pt]
  E_\eta^+ &\;:=\; \lim_{\varepsilon \to 0} E_\eta(t^* + \varepsilon)
            \;=\; \bigl\{ \{Y,X\} : \delta^{(k)}_{YX} > \eta \bigr\}.
\end{align*}
We adopt the \emph{maximum convention}: at each critical time $t^* = k\Delta t$,
set
\[
  \delta^{(k\Delta t)}_{YX} \;:=\; \max\!\bigl(\delta^{(k-1)}_{YX},\, \delta^{(k)}_{YX}\bigr),
  \qquad \text{so} \qquad
  E_\eta(k\Delta t) \;=\; E_\eta^- \cup E_\eta^+.
\]
This is the unique value at $t^*$ consistent with \cite[Definition~2.3(iv)]{KimMemoli2021}; it is not an additional hypothesis on the DBN,
since $\{k\Delta t\}$ has measure zero in $\BR_+$.

Now let $0 < \varepsilon < \Delta t$.  We verify both inclusions.

\begin{enumerate}[(i)]
  \item $E_\eta(t^* - \varepsilon) \subset E_\eta(t^*)$: suppose
    $\{Y,X\} \in E_\eta(t^* - \varepsilon)$.  Since
    $t^* - \varepsilon \in ((k-1)\Delta t, k\Delta t)$, we have
    $\delta^{(k-1)}_{YX} > \eta$, and therefore
    \[
      \max\!\bigl(\delta^{(k-1)}_{YX}, \delta^{(k)}_{YX}\bigr) \;\geq\; \delta^{(k-1)}_{YX} > \eta,
    \]
    so $\{Y,X\} \in E_\eta(t^*)$.

  \item $E_\eta(t^* + \varepsilon) \subset E_\eta(t^*)$: suppose
    $\{Y,X\} \in E_\eta(t^* + \varepsilon)$.  Since
    $t^* + \varepsilon \in (k\Delta t, (k+1)\Delta t)$, we have
    $\delta^{(k)}_{YX} > \eta$, and therefore
    \[
      \max\!\bigl(\delta^{(k-1)}_{YX}, \delta^{(k)}_{YX}\bigr) \;\geq\; \delta^{(k)}_{YX} > \eta,
    \]
    so $\{Y,X\} \in E_\eta(t^*)$.
\end{enumerate}

Hence $E_\eta(t^* - \varepsilon) \subset E_\eta(t^*) \supset E_\eta(t^* + \varepsilon)$
for all $0 < \varepsilon < \Delta t$.

\medskip
All four conditions of Definition \ref{Def:Dynamic-graph} are satisfied, so
$\mathscr{G}_{\mathcal{X}}$ is a dynamic graph.
\end{proof}

\medskip
\begin{remark}
The maximum convention at critical times has a natural interpretation:
at the moment $t^* = k\Delta t$ when the DBN transitions from time slice
$k-1$ to slice $k$, the graph $\mathscr{G}_{\mathcal{X}}(t^*)$ flashes
all edges that are strong enough on \emph{either} side of the transition.
Edges that are too weak on both sides never appear.  
\end{remark}


If we consider the fact that $\forall t \in \mathbb{R}_+$ $\mathcal{X_{\to}}^{(t)} = \mathcal{X}$, we actually obtain something called a \textit{saturated} dynamic graph. Moreover, this means that for a DG $\mathscr{G}_{X}= ( V_X(\cdot), E_X(\cdot))$, $V_X(\cdot) \equiv X$. If edges were not allowed to disappear, and if we did not introduce the function $E_\eta(\cdot)$, we would end up with what is called a \textit{constant} DG. A constant DG is a saturated DG, but where $E_X(\cdot)$ is a constant function. A constant DG is generally uninteresting, or trivial as $\textnormal{crit}(\mathscr{G}_X)= \emptyset$. This exact point gives rise to the function $E_\eta(\cdot)$.

\subsection{The Formigram of A Dynamic Bayesian Graph}\label{Sec:formigram}

With the construction of a dynamic Bayesian graph DBG, we can now apply methods from \cite{KimMemoli2021}. First, we will define the formigram of a DBG

\subsubsection{Definition and Remarks}

\begin{definition}[Formigram of A Dynamic Bayesian Graph]
    Take a DBG $\mathscr{G}_\mathcal{X}=(\mathcal{X}_\to^{(\cdot)},E_\eta(\cdot))$. We can apply the path component functor, $\pi_0$, to our DBG and get the function $\pi_0 (\mathscr{G}_\mathcal{X}): \mathbb{R}_+ \to \mathcal{P}(\mathcal{X})$ defined by $\pi_0 (\mathscr{G}_\mathcal{X})(t) = \pi_0(\mathscr{G}_\mathcal{X}(t))$ for $t \in \mathbb{R}_+$. Since a DBG is a DG, we say that $\pi_0(\mathscr{G}_\mathcal{X}(t))$ is a formigram by Proposition \ref{Prop:DG-to-formigram}.
\end{definition}

If we consider the fact that a dynamic Bayesian graph is really just a saturated dynamic graph, we can see what this implies for the formigram of a dynamic Bayesian graph. Since $\mathcal{X}_\to^{(t)} \equiv \mathcal{X}$ for all $t \in \mathbb{R}_+$, it becomes apparent that $\pi_0(\mathscr{G}_\mathcal{X}(t))$ will always be in $\mathcal{P}(\mathcal{X})$. This means that $\pi_0(\mathscr{G}_\mathcal{X})$ is what is called a \textit{saturated} formigram. A saturated formigram is a formigram $\theta_X : \mathbb{R} \to \mathcal{P}^{\textnormal{sub}}(X)$, where its image is  $\mathcal{P}(X) \subset \mathcal{P}^{\textnormal{sub}}(X)$.

\subsubsection{Examinations}

In addition to creating the notion of a DBG's formigram, it is important to examine the significance of merging and disbanding events in a DBG.

\begin{definition}[Merging, And Disbanding Events of A DBG]
    Consider the formigram of a DBG over $\mathcal{X}$, $\pi_0(\mathscr{G}_{\mathcal{X}})$. If $t^* \in \textnormal{crit}(\pi_0(\mathscr{G}_\mathcal{X}))$ and we have a sufficiently small $\varepsilon > 0$ where $\{ t^* \} = [t^* - \varepsilon, t^* + \varepsilon] \cap \textnormal{crit}(\pi_0(\mathscr{G}_\mathcal{X}))$ then
    \begin{enumerate}[(i)]
        \item a \textit{merging event} occurs at $t^* \in \mathbb{R}_+$ if we have two blocks $A,B \in \pi_0(\mathscr{G}_\mathcal{X})(t^* - \varepsilon)$ and a block $C \in \pi(\mathscr{G_{\mathcal{X}}})(t^*)$  where $A \cup B \subset C$,
        \item a \textit{disbanding event} occurs at $t^* \in \mathbb{R}_+$ if we have two blocks $A,B \in \pi_0(\mathscr{G}_\mathcal{X})(t^* + \varepsilon)$ and a block $C \in \pi(\mathscr{G_{\mathcal{X}}})(t^*)$  where $A \cup B \subset C$.
    \end{enumerate}
    Additionally, since a DBG is saturated, birth and death events do not occur.
\end{definition}

Considering our edge set $E_\eta(\cdot)$ and total variation distance edge strength function $\delta_{X'Y}^{(\cdot)}$ between two random variables $X', Y$, we can intuitively understand how a merging event will occur. Let us have two distinct sequences of connected nodes $X = X_0 , \cdots , X_n = X'$ and $Y = Y_0, \cdots, Y_n = Y'$ at time $t-\varepsilon$. Since these two sequences are distinct, this must mean that $\delta_{X'Y}^{(t-\varepsilon)}\leq \eta$. However, if at time $t$ we have that $\delta_{X'Y}^{(t-\varepsilon)} > \eta$, we will have a merging event as the two sequences will merge to become $X = X_0, \cdots, X_n = X' , Y = Y_0, \cdots, Y_n = Y'$. A similar result follows for disbanding events.

\subsection{Barcode of A Dynamic Bayesian Graph}

Much of the content of this section is tightly linked with information previously established in section \ref{Sec:Kim-Memoli-summary}. Hence, this section will be brief, mainly establishing the main results with some commentary.

As established in Section \ref{Sec:Zigzag}, the path component functor $\pi_0$ allows us to define a formigram associated with a DBG, $\pi_0 (\mathscr{G}_\mathcal{X})$. Since $\pi_0 (\mathscr{G}_\mathcal{X})$ is a formigram over the set $\mathcal{X}$, we are able to use Definition \ref{Def:Barcode-formigram} to obtain a barcode from a DBG

\begin{definition}[Barcode of A Dynamic Bayesian Graph]
    Let $\pi_0(\mathscr{G}_\mathcal{X})$ be a formigram of a DBG over the set $\mathcal{X}$. We say the multiset $\textnormal{dgm}(\mathscr{G}_\mathcal{X}) \coloneq \Psi_{K} \circ \textnormal{dgm}_K(\mathbb{V}_{\mathscr{G}_\mathcal{X}})$ defines the \textbf{barcode of a dynamic Bayesian graph}. 
\end{definition}

\subsection{Stability Results for Dynamic Bayesian Graphs}

This paper establishes stability results for DBGs. 

The relevant background for this is Section \ref{Sec:stability}. However, we must first introduce some additional concepts and definitions.

\begin{definition}[Time-Interlevel Smoothing of DBGs]
    If we consider the DBG $\mathscr{G}_{\mathcal{X}} = (\mathcal{X}_\to^{(\cdot)}, E_\eta(\cdot) )$ over $\mathcal{X}$, we can define the two following things
    \begin{enumerate}[(i)]
        \item Let $I \subset \mathbb{R}_+$ be and interval. We can define the following
        \begin{align}
            \bigcup_{I}\mathscr{G}_\mathcal{X} \coloneq \left ( \bigcup_{t \in I} \mathcal{X}_\to^{(t)}, \bigcup_{t \in I} E_\eta(t) \right ) = \left ( \mathcal{X}, \bigcup_{t \in I} E_\eta(t) \right )
        \end{align}
        \item The $\varepsilon-$smoothing of $\mathscr{G}_\mathcal{X}$, $S_\varepsilon \mathscr{G}_\mathcal{X}$, is defined in the following manner. Let $\varepsilon \geq 0$:
        \begin{align}
            S_\varepsilon \mathscr{G}_\mathcal{X}(t) = \bigcup_{[t]^\varepsilon} \mathscr{G}_\mathcal{X}
        \end{align}
        for $t \in \mathbb{R}$.
    \end{enumerate}
\end{definition}

\begin{proposition}
    Since the DBG $\mathscr{G}_\mathcal{X}$ is also a DG, by Propositions \ref{Prop:dynamic-Bayesian-dynamic-graph} and \ref{Prop:smoothing-DG}, $S_\varepsilon \mathscr{G}_\mathcal{X}$ fulfills the conditions for a dynamic graph. 
\end{proposition}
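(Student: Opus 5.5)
The plan is to reduce the statement to Proposition~\ref{Prop:smoothing-DG}, i.e.\ \cite[Proposition 6.7]{KimMemoli2021}, applied to $\mathscr{G}_\mathcal{X}$. Proposition~\ref{Prop:dynamic-Bayesian-dynamic-graph} shows that $\mathscr{G}_\mathcal{X}$ satisfies the four axioms of Definition~\ref{Def:Dynamic-graph}. Hence $\mathscr{G}_\mathcal{X}$ is a legitimate input to the smoothing construction. It remains to check two things: that the DBG smoothing $S_\varepsilon\mathscr{G}_\mathcal{X}$ coincides with the general DG smoothing, and that the domain mismatch ($\mathbb{R}_+$ versus $\mathbb{R}$) does no harm.

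First I will fix the domain issue. The DBG is defined on $\mathbb{R}_+$, while Definition~\ref{Def:Dynamic-graph} and the smoothing $S_\varepsilon\mathscr{G}_\mathcal{X}(t)=\bigcup_{[t]^\varepsilon}\mathscr{G}_\mathcal{X}$ are taken over all $t\in\mathbb{R}$. I would extend $E_\eta(\cdot)$ to negative times by the constant value $E_\eta(0)$, or equivalently read $[t]^\varepsilon$ as $[t]^\varepsilon\cap\mathbb{R}_+$. Either choice preserves self-loops, tameness (since $0$ becomes at most one further critical point), and comparability. With this convention the smoothing in the DBG definition is literally the Kim--M\'emoli smoothing of a DG, so Proposition~\ref{Prop:smoothing-DG} applies verbatim.

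For completeness, I would then sketch directly why the axioms hold, using saturation.
\begin{enumerate}[(i)]
\item The vertex set $\bigcup_{t'\in[t]^\varepsilon}\mathcal{X}_\to^{(t')}=\mathcal{X}$ is constant. This gives the lifespan axiom and vertex comparability immediately, and self-loops persist under unions.
\item For tameness, the edge set $\bigcup_{[t]^\varepsilon}E_\eta$ can only change when an endpoint $t\pm\varepsilon$ crosses a critical time $k\Delta t$. Hence $\mathrm{crit}(S_\varepsilon\mathscr{G}_\mathcal{X})\subseteq\{k\Delta t\pm\varepsilon\}$, which is locally finite.
\end{enumerate}

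The main obstacle is comparability at the shifted critical points. The window $[t]^\varepsilon$ is closed, so at $t=k\Delta t-\varepsilon$ it contains the flash value $E_\eta(k\Delta t)=E_\eta^-\cup E_\eta^+$ from the maximum convention. The nearby windows $[t\pm\varepsilon']$ include it too, or lose only points whose edges are already contained in it. By the comparability of $\mathscr{G}_\mathcal{X}$ itself, every $E_\eta(s)$ with $s$ near an endpoint lies in $E_\eta$ at the nearest critical time. I would therefore argue that the union over $[t\pm\varepsilon']^\varepsilon$ is contained in the union over $[t]^\varepsilon$. This is exactly the argument of \cite[Proposition 6.7]{KimMemoli2021}, which I would cite rather than reprove.
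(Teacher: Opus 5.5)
Your proposal is correct and matches the paper's argument: the paper justifies the statement by combining Proposition~\ref{Prop:dynamic-Bayesian-dynamic-graph} (a DBG is a DG) with \cite[Proposition 6.7]{KimMemoli2021} (Proposition~\ref{Prop:smoothing-DG}), which is exactly your reduction. Your extension from $\mathbb{R}_+$ to $\mathbb{R}$ (for instance constantly by $E_\eta(0)$) addresses a domain mismatch that the paper passes over in silence, and the saturation-based sketch of the axioms is a harmless supplement.
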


Since it has been established that $\mathscr{G}_\mathcal{X}$ and $S_\varepsilon \mathscr{G}_\mathcal{X}$ are both DGs, the following Theorem follows from Theorem \ref{Thm:pseudo-metric}.

\begin{theorem}\label{Thm:pseudometric}
    The function $d_1^{\textnormal{dynG}}$ is an extended pseudo metric on DBGs.
\end{theorem}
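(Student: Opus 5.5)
The plan is to deduce the statement by restriction from Theorem~\ref{Thm:pseudo-metric}. That theorem says $d_1^{\textnormal{dynG}}$ is an extended pseudo-metric on the class $\mathcal{N}$ of all dynamic graphs. By Proposition~\ref{Prop:dynamic-Bayesian-dynamic-graph}, every DBG $\mathscr{G}_{\mathcal{X}} = (\mathcal{X}_\to^{(\cdot)}, E_\eta(\cdot))$ is a DG. So the class $\mathcal{D}$ of DBGs, for any choice of DBN and threshold $\eta$, sits inside $\mathcal{N}$.

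The key observation is that the restriction of an extended pseudo-metric to a subset is again an extended pseudo-metric. Each axiom is a universally quantified statement about points of the ambient set: $d(x,x)=0$, $d(x,y)=d(y,x)$, $d(x,z)\le d(x,y)+d(y,z)$, with values in $[0,\infty]$. Each therefore holds verbatim for points of $\mathcal{D}\subset\mathcal{N}$.

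The one thing to check is that the restricted distance is literally the same number. The interleaving distance between two DBGs $\mathscr{G}_{\mathcal{X}}$ and $\mathscr{G}_{\mathcal{Y}}$ must be computed by the same recipe as for them viewed as DGs. This means the infimum over $\varepsilon$ for which an $\varepsilon$-tripod $R$ with $\mathscr{G}_{\mathcal{X}}\xrightarrow{R}S_\varepsilon\mathscr{G}_{\mathcal{Y}}$ and $\mathscr{G}_{\mathcal{Y}}\xrightarrow{R}S_\varepsilon\mathscr{G}_{\mathcal{X}}$ exist. The smoothing of a DBG defined in this section agrees with the DG smoothing of Section~\ref{Sec:stability}: the vertex union is just $\mathcal{X}$, and the edge union is the same union. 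By the preceding proposition, $S_\varepsilon\mathscr{G}_{\mathcal{X}}$ is again a DG. Hence the tripod conditions, and so the infimum, coincide.

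I would briefly note two expected cases. When $\mathcal{X}\neq\mathcal{Y}$ the distance may well be finite, since tripods only require surjections. If no $\varepsilon$-tripod exists for any $\varepsilon$, the value is $+\infty$, which is allowed. Nondegeneracy fails, for example for isomorphic DBGs arising from different DBNs, which is why only a pseudo-metric is claimed.

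The main obstacle is a domain mismatch. DBGs are defined only for $t\in\mathbb{R}_+$, while DGs in Definition~\ref{Def:Dynamic-graph} live on all of $\mathbb{R}$, and the lifespan $I_X=\mathbb{R}_+$ should be a closed interval in $\mathbb{R}$. To make the identification with elements of $\mathcal{N}$ honest, I would first extend each DBG to $\mathbb{R}$. The first choice is to make it empty for $t<0$, giving lifespan $[0,\infty)$, which is closed. Alternatively, one can keep the time-slice-$0$ edge set constant for $t\le 0$. Either choice should be fixed uniformly for all DBGs and checked against the axioms. Once this convention is fixed, the restriction argument above goes through without further work.
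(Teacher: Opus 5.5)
Your proposal is correct and follows the same route as the paper. The paper observes that DBGs and their $\varepsilon$-smoothings are DGs (Propositions~\ref{Prop:dynamic-Bayesian-dynamic-graph} and~\ref{Prop:smoothing-DG}) and then restricts the extended pseudo-metric of Theorem~\ref{Thm:pseudo-metric} to this subclass; your extra step of uniformly extending each DBG from $\mathbb{R}_+$ to $\mathbb{R}$ fixes a domain mismatch the paper passes over silently, and it adds rigor without changing the approach.
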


\begin{result}[The Stability Result for DBGs]
    Since DBGs are DGs, DBGs are mapped as formigrams in a way that is stable using the path component functor $\pi_0$. 
\end{result}

\begin{proposition}\label{Prop:distance}
    Let $\pi_0(\mathscr{G}_\mathcal{X})$ be a formigram of a DBG over the set of random variables $\mathcal{X}$. Since $\pi_0(\mathscr{G}_\mathcal{X})$ is a formigram, then for $\varepsilon \geq 0$,
    \begin{align}
        d_B(\textnormal{dgm}(S_\varepsilon \pi_0(\mathscr{G}_\mathcal{X}),\textnormal{dgm}(\pi_0(\mathscr{G}_\mathcal{X}))) \leq \varepsilon
    \end{align}
\end{proposition}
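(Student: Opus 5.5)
The plan is to reduce the statement to the corollary recorded at the end of Section~\ref{Sec:stability} (Kim--M\'emoli, Corollary~7.2). That corollary asserts that for an arbitrary formigram $\theta_X$ over a finite non-empty set $X$ and any $\varepsilon\geq 0$,
\[
d_B\bigl(\mathrm{dgm}(S_\varepsilon\theta_X),\mathrm{dgm}(\theta_X)\bigr)\leq\varepsilon .
\]
So the only real work is to check that $\theta_{\mathcal{X}}:=\pi_0(\mathscr{G}_{\mathcal{X}})$ is a legitimate input to that corollary, and that the barcodes in the statement are the ones the corollary refers to.

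\textbf{Step 1: $\theta_{\mathcal{X}}$ is a formigram.} By Proposition~\ref{Prop:dynamic-Bayesian-dynamic-graph}, $\mathscr{G}_{\mathcal{X}}$ is a dynamic graph. Proposition~\ref{Prop:DG-to-formigram} then gives that $\theta_{\mathcal{X}}$ is a formigram. The set $\mathcal{X}=\{X_1,\dots,X_n\}$ is finite and non-empty, as the corollary requires.

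\textbf{Step 2: the barcodes are well defined.} The smoothing $S_\varepsilon\theta_{\mathcal{X}}$ is again a formigram by the cited Proposition~6.24 of Kim--M\'emoli. Hence both barcodes are defined via Definition~\ref{Def:Barcode-formigram}. By the Note on invariance of the indexing set $K$, neither barcode depends on the choice of $K$.

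\textbf{Step 3: apply the corollary.} Taking $\theta_X=\theta_{\mathcal{X}}$ in the corollary yields exactly the claimed inequality.

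\textbf{Main obstacle: the time domain.} The DBG, and hence $\theta_{\mathcal{X}}$, is defined only on $\mathbb{R}_+$, whereas formigrams in Definition~\ref{Def:formigram} are functions on $\mathbb{R}$. The lifespan condition requires a closed interval, so the natural extension is to set $\theta_{\mathcal{X}}(t)=\emptyset$ (the empty sub-partition) for $t<0$. This makes each lifespan equal to $[0,\infty)$, which is closed. I then need to recheck two things for the extended function:
\begin{enumerate}[(i)]
\item \emph{Comparability at $t=0$.} The left neighbour $\theta_{\mathcal{X}}(0-\varepsilon)=\emptyset$ is trivially finer than $\theta_{\mathcal{X}}(0)$, and the right side is handled as in the proof of Proposition~\ref{Prop:dynamic-Bayesian-dynamic-graph}.
\item \emph{Tameness.} The only new critical point is $0$, so the critical set stays locally finite.
\end{enumerate}
With this extension, births occur at $0$ and no other birth or death events are introduced.

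One more point to confirm is that the smoothing $S_\varepsilon$, which uses windows $[t]^\varepsilon$ reaching into negative times, is computed on the extended formigram, so the statement should be read that way. After that, the corollary applies verbatim.
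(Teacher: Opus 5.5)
Your proposal is correct and follows the same route as the paper. The paper's entire justification is the chain: the DBG is a DG (Proposition~\ref{Prop:dynamic-Bayesian-dynamic-graph}), so $\pi_0(\mathscr{G}_{\mathcal{X}})$ is a formigram (Proposition~\ref{Prop:DG-to-formigram}), and the Kim--M\'emoli smoothing corollary then applies verbatim. Your extra time-domain step, extending by the empty sub-partition for $t<0$ and rechecking comparability and tameness at $0$, is a detail the paper leaves implicit; it is consistent with the paper defining $S_\varepsilon\mathscr{G}_{\mathcal{X}}(t)$ for all $t\in\mathbb{R}$.
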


\section{Existing Work in Clustering BNs}

\subsection{Applications of BNs}

It is widely known that DBNs are a considerably useful tool in the arsenal of many different domains of science. One can consider medicine, biology, sociology, and engineering as some fields that utilize DBNs. However, it is often the case that DBNs can lend themselves to an intuitive way of understanding dependence structures between nodes, but still fall behind in terms of computation. The inference and learning of DBNs often lends itself to computational difficulties that tend towards NP-hard. For this reason, algorithms for \textit{Bayesian network clustering} have been used to support inference on marginal distributions, see \cite{wu2025optimal}. Existing clustering methods use various algorithmic contexts to support inference on DBNs. Some existing clustering algorithms include work by Albrecht and Ramamoorthy on clustering for selective filtering in a Dynamic Bayesian Network, \cite{Albrecht-Ramamoorthy}, and the DCMAP algorithm developed by \cite{wu2025optimal}. 
However, many of these algorithms lack a rigorous method for global approximation of clusters and how these clusters impact computation.

\subsection{Inference on A BN}

Inference on a Bayesian Network involves computing the marginal distribution on each variable $X \in \mathcal{X}$ given some evidence $\mathbf{E} = \{ X_1 = \overline{x_1}, X_2 = x_2, \cdots\}$. This marginal distribution is given by $P(X \mid \mathbf{E})$. Moreover, this distribution is calculated by marginalizing all other variables:
\begin{align}
    P(X \mid \mathbf{E}) &= \frac{P(X, \mathbf{E})}{P(\mathbf{E})} \\
    P(X,\mathbf{E}) &= \sum_{X_i \in \mathcal{X}-\{ X \} }  \prod_{i=1}^{n} P(X_i \mid \text{Pa}^G_{X_i}) \delta(X_i).
\end{align}
We can remember that $P(X_i \mid \text{Pa}^G_{X_i})$ represents the probability of $X_i$ given its parents. Furthermore, we say that $\delta(X_i)$ represents the evidence for variable $X_i$, and $\mathcal{X} - \{ X \}$ is the set difference where $X$ is removed from $\mathcal{X}$. An important observation is that the joint distribution $P(\mathcal{X}) = \prod_{i=1}^{n} P(X_i \mid \text{Pa}^G_{X_i})$ has order 
$2^{|\mathcal{X}|}$. 
So the number of entries in the joint table grows exponentially as $2^n$. We use marginalization to essentially remove these dimensions in the joint distribution. This is done by summing over all states $x_{i1}, x_{i2}, \cdots$ for every $X_i \in \mathcal X$. This is exactly what $\sum_{X_i \in \mathcal X - \{ X \}}$ represents. Then the term $P(\mathbf{E})$ is used to normalize the distribution. These results are further described in  \cite{wu2025optimal}.

\subsection{Example: Clustering Using DCMAP}

The following sections will summarize examples seen in \cite{wu2025optimal} related to clustering according to Dependent Cluster MAPping (DCMAP). Consider a Bayesian network $\mathcal{B}= (G,P)$. with the following structure. We will define our node set to be $\mathcal{X} = \{ X_1, X_2, X_3, X_4 , X_5, X_6, X_7\}$. We will also say that each of our random variables is a binary random variable, $X_i = \overline{x_i}$ or $X_i = x_i$, with dependence defined as in Figure \ref{Fig:Bayesian-network}.

\begin{figure}[H]
    \centering
        \begin{tikzpicture}[x=0.65pt,y=0.65pt,yscale=-1,xscale=1]
        
        \draw    (330.75,220.75) -- (417.92,153.22) ;
        \draw [shift={(419.5,152)}, rotate = 142.24] [fill={rgb, 255:red, 0; green, 0; blue, 0 }  ][line width=0.08]  [draw opacity=0] (12,-3) -- (0,0) -- (12,3) -- cycle    ;
        \draw    (223.75,220.75) -- (310,220.75) ;
        \draw [shift={(312,220.75)}, rotate = 180] [fill={rgb, 255:red, 0; green, 0; blue, 0 }  ][line width=0.08]  [draw opacity=0] (12,-3) -- (0,0) -- (12,3) -- cycle    ;
        \draw    (330.75,140.75) -- (417,140.75) ;
        \draw [shift={(419,140.75)}, rotate = 180] [fill={rgb, 255:red, 0; green, 0; blue, 0 }  ][line width=0.08]  [draw opacity=0] (12,-3) -- (0,0) -- (12,3) -- cycle    ;
        \draw    (223.75,140.75) -- (310,140.75) ;
        \draw [shift={(312,140.75)}, rotate = 180] [fill={rgb, 255:red, 0; green, 0; blue, 0 }  ][line width=0.08]  [draw opacity=0] (12,-3) -- (0,0) -- (12,3) -- cycle    ;
        \draw    (223.75,63.75) -- (311.89,128.81) ;
        \draw [shift={(313.5,130)}, rotate = 216.43] [fill={rgb, 255:red, 0; green, 0; blue, 0 }  ][line width=0.08]  [draw opacity=0] (12,-3) -- (0,0) -- (12,3) -- cycle    ;
        \draw    (223.75,63.75) -- (415,63.75) ;
        \draw [shift={(417,63.75)}, rotate = 180] [fill={rgb, 255:red, 0; green, 0; blue, 0 }  ][line width=0.08]  [draw opacity=0] (12,-3) -- (0,0) -- (12,3) -- cycle    ;
        \draw  [fill={rgb, 255:red, 255; green, 255; blue, 255 }  ,fill opacity=1 ] (205,63.75) .. controls (205,53.39) and (213.39,45) .. (223.75,45) .. controls (234.11,45) and (242.5,53.39) .. (242.5,63.75) .. controls (242.5,74.11) and (234.11,82.5) .. (223.75,82.5) .. controls (213.39,82.5) and (205,74.11) .. (205,63.75) -- cycle ;
        \draw  [fill={rgb, 255:red, 255; green, 255; blue, 255 }  ,fill opacity=1 ] (205,140.75) .. controls (205,130.39) and (213.39,122) .. (223.75,122) .. controls (234.11,122) and (242.5,130.39) .. (242.5,140.75) .. controls (242.5,151.11) and (234.11,159.5) .. (223.75,159.5) .. controls (213.39,159.5) and (205,151.11) .. (205,140.75) -- cycle ;
        \draw  [fill={rgb, 255:red, 255; green, 255; blue, 255 }  ,fill opacity=1 ] (205,220.75) .. controls (205,210.39) and (213.39,202) .. (223.75,202) .. controls (234.11,202) and (242.5,210.39) .. (242.5,220.75) .. controls (242.5,231.11) and (234.11,239.5) .. (223.75,239.5) .. controls (213.39,239.5) and (205,231.11) .. (205,220.75) -- cycle ;
        \draw  [fill={rgb, 255:red, 255; green, 255; blue, 255 }  ,fill opacity=1 ] (312,140.75) .. controls (312,130.39) and (320.39,122) .. (330.75,122) .. controls (341.11,122) and (349.5,130.39) .. (349.5,140.75) .. controls (349.5,151.11) and (341.11,159.5) .. (330.75,159.5) .. controls (320.39,159.5) and (312,151.11) .. (312,140.75) -- cycle ;
        \draw  [fill={rgb, 255:red, 255; green, 255; blue, 255 }  ,fill opacity=1 ] (312,220.75) .. controls (312,210.39) and (320.39,202) .. (330.75,202) .. controls (341.11,202) and (349.5,210.39) .. (349.5,220.75) .. controls (349.5,231.11) and (341.11,239.5) .. (330.75,239.5) .. controls (320.39,239.5) and (312,231.11) .. (312,220.75) -- cycle ;
        \draw   (417,63.75) .. controls (417,53.39) and (425.39,45) .. (435.75,45) .. controls (446.11,45) and (454.5,53.39) .. (454.5,63.75) .. controls (454.5,74.11) and (446.11,82.5) .. (435.75,82.5) .. controls (425.39,82.5) and (417,74.11) .. (417,63.75) -- cycle ;
        \draw   (417,140.75) .. controls (417,130.39) and (425.39,122) .. (435.75,122) .. controls (446.11,122) and (454.5,130.39) .. (454.5,140.75) .. controls (454.5,151.11) and (446.11,159.5) .. (435.75,159.5) .. controls (425.39,159.5) and (417,151.11) .. (417,140.75) -- cycle ;
        
        \draw (214,54.4) node [anchor=north west][inner sep=0.75pt]    {$X_{1}$};
        \draw (214,131.4) node [anchor=north west][inner sep=0.75pt]    {$X_{2}$};
        \draw (214,211.4) node [anchor=north west][inner sep=0.75pt]    {$X_{3}$};
        \draw (321,131.4) node [anchor=north west][inner sep=0.75pt]    {$X_{4}$};
        \draw (321,211.4) node [anchor=north west][inner sep=0.75pt]    {$X_{5}$};
        \draw (426,54.4) node [anchor=north west][inner sep=0.75pt]    {$X_{6}$};
        \draw (426,131.4) node [anchor=north west][inner sep=0.75pt]    {$X_{7}$};

    \end{tikzpicture}
    \caption{A Bayesian network}
    \label{Fig:Bayesian-network}
\end{figure}


The goal with this Bayesian network is to perform inference. This means, given some evidence $\mathbf{E}$, we would like to be able to find the prior distribution $P(X_i, \mathbf{E}), \forall X_i \in \mathcal{X}$. First we will take notice of the CPT of each of the nodes in the network. The nodes $X_1, X_2, X_3$ each have a CPT of the form $P(X_i) = [p_{x_i}, p_{\overline{x_i}}]^T$ where $p_{x_i}$ represents the probability of $X_i =x_i$. Furthermore, since these nodes have no parents, they have a smaller CPT. Nodes like $X_5, X_7$ have a CPT of the form
\begin{align}
    P(X_i \mid X_j) = 
    \left [ 
    \begin{array}{cc}
        p_{x_ix_j} & p_{x_i \overline{x_j}} \\
        p_{\overline{x_i}x_j} & p_{\overline{x_i}\overline{x_j}}
    \end{array}
    \right ].
\end{align}
 
We will also note that $p_{x_ix_j}$ is the probability of $X_i = x_i$ given $X_j = x_j$. And nodes like $X_4, X_5$ have a CPT of the form
\begin{align}
    P(X_i \mid X_j, X_k) = 
    \left [ 
    \begin{array}{cccc}
        p_{x_i x_j x_k} & p_{x_i x_j \overline{x_k}} & p_{x_i \overline{x_j} x_k} & p_{x_i \overline{x_j}\overline{x_k}} \\
        p_{\overline{x_i} x_j x_k} & p_{\overline{x_i} x_j \overline{x_k}} & p_{\overline{x_i}\overline{x_j} x_k} & p_{\overline{x_i}\overline{x_j}\overline{x_k}}
    \end{array}
    \right ]
\end{align}
where $p_{x_ix_jx_k}$ represents the probability of $X_i = x_i$ given $X_j = x_j$ and $X_k = x_k$. We can see here in this example that more parents for a given $X_i$ results in higher dimensionality for its corresponding CPT. Moreover, if a particular variable is allowed to have many different states beyond just two, the dimensionality of our matrix increases even more. This will then result in an exponential growth in computation. Now if we use our equation for $P(X, \mathbf{E})$, we can obtain the marginal distribution for $P(X_6)$
\begin{align}
    P(X_6) = \sum_{\mathcal{X}-\{ X_6 \} }P(X_1)P(X_2)P(X_3)P(X_4 \mid X_1,X_2) P(X_5 \mid X_3 ) P(X_6 | X_1) P(X_7 | X_4, X_5).
\end{align}
However, computing $P(X_6)$ requires a substantial amount of calculation. Since each variable is binary, the full joint table for variables $\mathcal{X}$ is on the order of $\mathcal O(2^{n})$ where $n = |\mathcal X|$. 
According to \cite{wu2025optimal}, computing $P(X_6)$ naively requires $608$ products and $126$ sums. 
However, with DCMAP, we are able to reduce the computational complexity for determining the distribution. Consider the partition of our Bayesian network $\mathcal{B}$ in Figure \ref{Fig:clustering}.

\begin{figure}[H]
    \centering
    \begin{tikzpicture}[x=0.65pt,y=0.65pt,yscale=-1,xscale=1]
        
        \draw    (331.75,221.75) -- (418.92,154.22) ;
        \draw [shift={(420.5,153)}, rotate = 142.24] [fill={rgb, 255:red, 0; green, 0; blue, 0 }  ][line width=0.08]  [draw opacity=0] (12,-3) -- (0,0) -- (12,3) -- cycle    ;
        \draw    (224.75,221.75) -- (311,221.75) ;
        \draw [shift={(313,221.75)}, rotate = 180] [fill={rgb, 255:red, 0; green, 0; blue, 0 }  ][line width=0.08]  [draw opacity=0] (12,-3) -- (0,0) -- (12,3) -- cycle    ;
        \draw    (331.75,141.75) -- (418,141.75) ;
        \draw [shift={(420,141.75)}, rotate = 180] [fill={rgb, 255:red, 0; green, 0; blue, 0 }  ][line width=0.08]  [draw opacity=0] (12,-3) -- (0,0) -- (12,3) -- cycle    ;
        \draw    (224.75,141.75) -- (311,141.75) ;
        \draw [shift={(313,141.75)}, rotate = 180] [fill={rgb, 255:red, 0; green, 0; blue, 0 }  ][line width=0.08]  [draw opacity=0] (12,-3) -- (0,0) -- (12,3) -- cycle    ;
        \draw    (225.75,65.75) -- (313.89,130.81) ;
        \draw [shift={(315.5,132)}, rotate = 216.43] [fill={rgb, 255:red, 0; green, 0; blue, 0 }  ][line width=0.08]  [draw opacity=0] (12,-3) -- (0,0) -- (12,3) -- cycle    ;
        \draw    (224.75,64.75) -- (416,64.75) ;
        \draw [shift={(418,64.75)}, rotate = 180] [fill={rgb, 255:red, 0; green, 0; blue, 0 }  ][line width=0.08]  [draw opacity=0] (12,-3) -- (0,0) -- (12,3) -- cycle    ;
        \draw  [fill={rgb, 255:red, 255; green, 255; blue, 255 }  ,fill opacity=1 ] (206,64.75) .. controls (206,54.39) and (214.39,46) .. (224.75,46) .. controls (235.11,46) and (243.5,54.39) .. (243.5,64.75) .. controls (243.5,75.11) and (235.11,83.5) .. (224.75,83.5) .. controls (214.39,83.5) and (206,75.11) .. (206,64.75) -- cycle ;
        \draw  [fill={rgb, 255:red, 184; green, 233; blue, 134 }  ,fill opacity=1 ] (206,141.75) .. controls (206,131.39) and (214.39,123) .. (224.75,123) .. controls (235.11,123) and (243.5,131.39) .. (243.5,141.75) .. controls (243.5,152.11) and (235.11,160.5) .. (224.75,160.5) .. controls (214.39,160.5) and (206,152.11) .. (206,141.75) -- cycle ;
        \draw  [fill={rgb, 255:red, 128; green, 128; blue, 128 }  ,fill opacity=1 ] (206,221.75) .. controls (206,211.39) and (214.39,203) .. (224.75,203) .. controls (235.11,203) and (243.5,211.39) .. (243.5,221.75) .. controls (243.5,232.11) and (235.11,240.5) .. (224.75,240.5) .. controls (214.39,240.5) and (206,232.11) .. (206,221.75) -- cycle ;
        \draw  [fill={rgb, 255:red, 155; green, 155; blue, 155 }  ,fill opacity=1 ] (313,141.75) .. controls (313,131.39) and (321.39,123) .. (331.75,123) .. controls (342.11,123) and (350.5,131.39) .. (350.5,141.75) .. controls (350.5,152.11) and (342.11,160.5) .. (331.75,160.5) .. controls (321.39,160.5) and (313,152.11) .. (313,141.75) -- cycle ;
        \draw  [fill={rgb, 255:red, 74; green, 74; blue, 74 }  ,fill opacity=1 ] (313,221.75) .. controls (313,211.39) and (321.39,203) .. (331.75,203) .. controls (342.11,203) and (350.5,211.39) .. (350.5,221.75) .. controls (350.5,232.11) and (342.11,240.5) .. (331.75,240.5) .. controls (321.39,240.5) and (313,232.11) .. (313,221.75) -- cycle ;
        \draw   (418,64.75) .. controls (418,54.39) and (426.39,46) .. (436.75,46) .. controls (447.11,46) and (455.5,54.39) .. (455.5,64.75) .. controls (455.5,75.11) and (447.11,83.5) .. (436.75,83.5) .. controls (426.39,83.5) and (418,75.11) .. (418,64.75) -- cycle ;
        \draw  [fill={rgb, 255:red, 155; green, 155; blue, 155 }  ,fill opacity=1 ] (418,141.75) .. controls (418,131.39) and (426.39,123) .. (436.75,123) .. controls (447.11,123) and (455.5,131.39) .. (455.5,141.75) .. controls (455.5,152.11) and (447.11,160.5) .. (436.75,160.5) .. controls (426.39,160.5) and (418,152.11) .. (418,141.75) -- cycle ;
        \draw  [color={rgb, 255:red, 128; green, 128; blue, 128 }  ,draw opacity=1 ] (192,35) -- (256.75,35) -- (256.75,252) -- (192,252) -- cycle ;
        \draw  [color={rgb, 255:red, 155; green, 155; blue, 155 }  ,draw opacity=1 ] (299,114) -- (363.75,114) -- (363.75,252) -- (299,252) -- cycle ;
        \draw  [color={rgb, 255:red, 155; green, 155; blue, 155 }  ,draw opacity=1 ] (405,34) -- (469.75,34) -- (469.75,172) -- (405,172) -- cycle ;
        
        \draw (215,55.4) node [anchor=north west][inner sep=0.75pt]    {$X_{1}$};
        \draw (215,132.4) node [anchor=north west][inner sep=0.75pt]    {$X_{2}$};
        \draw (215,212.4) node [anchor=north west][inner sep=0.75pt]    {$X_{3}$};
        \draw (322,132.4) node [anchor=north west][inner sep=0.75pt]    {$X_{4}$};
        \draw (322,212.4) node [anchor=north west][inner sep=0.75pt]    {$X_{5}$};
        \draw (427,55.4) node [anchor=north west][inner sep=0.75pt]    {$X_{6}$};
        \draw (427,132.4) node [anchor=north west][inner sep=0.75pt]    {$X_{7}$};
    \end{tikzpicture}
\caption{Cluster and layers used in clustering algorithm}
\label{Fig:clustering}
\end{figure}


The shading in Figure \ref{Fig:clustering}  indicates the different clusters created by the DCMAP algorithm. Each box represents a ``layer" that is used in their clustering algorithm. The utility of this algorithm lies in its ability to reduce computational complexity when determining the distribution of $P(X_i)$. For instance, the distribution for $P(X_6)$ can now be represented in the following manner:
\begin{align}
\tiny{P(X_6)=\sum_{X_1}P(X_6\mid X_1)P(X_1)\left(\sum_{X_4,X_5,X_7}P(X_7\mid X_4,X_5)\left(\sum_{X_2}P(X_4\mid X_1,X_2)\left(\sum_{X_3}P(X_5\mid X_3)P(X_3)\right)\right)\right)}
\end{align}

For this particular specification of the distribution $P(X_6)$, clustering is able to significantly reduce the computational cost. The results from \cite{wu2025optimal} demonstrate the utility in seeing BNs as clusters for the sake of reducing computational cost.

\section{Interpreting Stable Distance Persistence Homology for Dynamic Bayesian Networks} \label{Sec:stable-distance-2}

An important question to ask with regard to SDPH is how it aligns with our given framework of DBNs. Moreover, how can we leverage clustering structure so that inference can be performed on a DBN more efficiently using SDPH. There are, in fact, several ways in which SDPH creates a new frame of reference in understanding DBNs.

\begin{note}[Key Motivation for Stable Distance Persistent Homology]
    We can list the different ways in which SDPH creates a useful framework for Bayesian Network clustering.
    \begin{enumerate}[(i)]
        \item Most clustering algorithms for BNs are heuristics that often use widely varying techniques to optimize computability for inference on a BN.
        \item These methods often fail to capture the time-evolving structure of DBNs across time. 
        \item Our method leverages the inherent structure of DBNs to track clustering signatures that emerge and disappear across time with respect to an edge strength that can represent a myriad of different relationships between nodes.
        \item The notion of edge strength is purposefully left abstract in order to give our framework adaptability in different arenas of inference goals.
        \item Using SDPH allows one to track the emergence and death of clustering signatures across time. With a strategic choice of edge strength, resultant clustering structures lend themselves to informative perspectives on DBNs.
        \item SDPH also lends itself to network comparison, meaning that a particular DBG $\mathscr{G}_{\mathcal{X}}$ can be compared to another DBG $\mathscr{G}_{\mathcal{Y}}$. Most obviously, this lends itself to stability in our persistence barcodes, but also more broadly defines a pseudo metric on the space of DBNs.
    \end{enumerate}
\end{note}

In considering these key points, it now becomes necessary to merge the language of SDPH with current frameworks for BN clustering. The following sections will further contextualize our results.

\subsection{Clusters}

\begin{definition}[Cluster Map]
    We say the \textbf{cluster map} $u(\cdot): \mathcal{X} \to \mathbb{N}$ is defined as $X \mapsto u(X)=k$ for $k \in \mathbb{N}$ and $X \in \mathcal{X}$, where $1 \leq u(X) \leq m$ is the index for $m$ unique clusters. Each cluster family associated with a particular $k \in [1,m]$ is denoted as $\mathbf{X}_k$.
\end{definition}

\begin{definition}[Static Path Component Clustering Map]
    For a BN $\mathcal{B}$ and its underlying Bayesian network structure $G = (\mathcal{X}, E_{\mathcal{X}})$, we define the \textbf{static path component clustering map} $u_{\pi_0}(\cdot): \mathcal{X} \to \mathbb{N}$ where for $X \in \mathcal X$, $X \to u_{\pi_0}(X) =k$, and $1 \leq u_{\pi_0}(X) \leq m$. Moreover, each clustering family $\mathbf{X}_k^{\pi_0}$ is defined by a relation where $X, X' \in \mathbf{X}_k^{\pi_0}$ if and only if $X \sim X'$. We also say that $X \sim X'$ if an only if there exists a finite sequence $X = X_1, X_2, \cdots,X_n = X'$ where $(X_i,X_{i+1}) \textnormal{ or } (X_{i+1}, X_i)\in E_\mathcal{X}$. 
\end{definition}

For any given BN, we require that there is no separation in the underlying Bayesian network structure. So, for this static path component clustering map will always result in one clustering family that is the entirety of the BN. However, if we are now to consider a DBN, we can create a clustering map with respect to its underlying DBG.

\begin{definition}[Time-Varying Path Component Clustering Map]
    For a DBN $\langle \mathcal{B}_0, \mathcal{B}_{\to} \rangle$ and its underlying DBG $\mathscr{G}_\mathcal{X}=(\mathcal{X}_\to^{(\cdot)},E_\eta(\cdot))$, we can remember the path component functor $\pi_0$ where $\pi_0(\mathscr{G}_\mathcal{X}): \mathbb{R}_+ \to \mathcal{P}(\mathcal{X})$ is defined by the formigram $\pi_0(\mathscr{{G}_\mathcal{X}})(t) = \pi_0(\mathscr{G}_\mathcal{X}(t))$ for $t \in \mathbb{R}_+$. We define the \textbf{time-varying path component clustering map} for some time $t \in \mathbb{R}_+$ as the function $u_{\pi_0}^t: \mathcal{X} \to \mathbb{N}$ defined by $X \mapsto u_{\pi_0}^t(X)= c$ where $1 \leq u_{\pi_0}^t(X) \leq |\pi_0(\mathscr{{G}_\mathcal{X}})(t)|$. Moreover, each clustering family at time $t$ is defined by the relation where the nodes $X, X'$ are in the cluster family $\mathbf{X}_c^{\pi_0}(t) \in \pi_0(\mathscr{{G}_\mathcal{X}})(t)$ if and only if $X \sim X'$. We also say that $X \sim X'$ if and only if there exists a finite sequence $X = X_1, X_2, \cdots,X_n = X'$ where $(X_i,X_{i+1}) \in E_\eta(t)$.
\end{definition}

\begin{remark}[Notation for Variables in $\mathbf{X}_c^{\pi_0}(t)$]
    Since we claim that $\mathbf{X}_c^{\pi_0}(t)$ is a cluster family for our DBN at time $t$ derived from the underlying DBG, we would expect to find that $\mathbf{X}_c^{\pi_0}(t)$ contains time indexed variables. However, It does not. For this reason we need redefine the clustering families for our time-varying path component clustering map.
\end{remark}

Now we can translate our clustering families back from the world of DBGs, and into the world of DBNs.

\begin{definition}[Time-Varying Clustering of A DBN]
    Consider the time intervals that define our DBN $[0, \Delta t), [\Delta t, 2 \Delta t), \cdots, [k \Delta t, (k+1)\Delta t)$. We can see that our instantiated random variables take on the form $ X_i^{(k)} \in  \mathcal X^{(k)}$ where $\mathcal X^{(k)}$ represents the random variables in the interval $[ k\Delta t, (k+1)\Delta t)$ for $k \in \mathbb{N}$, $0 \leq k \leq K$. We define the clustering family at integer time step $k$ as $\mathbf{X}_c(k) := \mathbf{X}_c^{\pi_0}(t^*)$, where $t^* = \tfrac{(2k+1)\Delta t}{2}$ is the midpoint of the interval $[k\Delta t, (k+1)\Delta t]$.
\end{definition}

\begin{remark}[Independence of Clustering to Inter-Time-Slice Edges]
    We may notice that clustering families for a particular moment in time $t$ does not reach across moments in time. This is a direct result of the fact that \textit{inter-time-slice edges} are not considered in the clustering model.
\end{remark}

\subsection{Clustering using SDPH: Example} 

Using these definitions, we can now work through an example. Consider the following Bayesian Network:

\begin{figure}[H]
    \begin{centering}
        \begin{tikzpicture}[x=0.75pt,y=0.75pt,yscale=-0.7,xscale=0.7]
        
        \draw    (405,51) -- (315.54,124.73) ;
        \draw [shift={(314,126)}, rotate = 320.51] [color={rgb, 255:red, 0; green, 0; blue, 0 }  ][line width=0.75]    (10.93,-3.29) .. controls (6.95,-1.4) and (3.31,-0.3) .. (0,0) .. controls (3.31,0.3) and (6.95,1.4) .. (10.93,3.29)   ;
        \draw    (405,51) -- (405,118) ;
        \draw [shift={(405,120)}, rotate = 270] [color={rgb, 255:red, 0; green, 0; blue, 0 }  ][line width=0.75]    (10.93,-3.29) .. controls (6.95,-1.4) and (3.31,-0.3) .. (0,0) .. controls (3.31,0.3) and (6.95,1.4) .. (10.93,3.29)   ;
        \draw    (405,51) -- (497.42,122.77) ;
        \draw [shift={(499,124)}, rotate = 217.83] [color={rgb, 255:red, 0; green, 0; blue, 0 }  ][line width=0.75]    (10.93,-3.29) .. controls (6.95,-1.4) and (3.31,-0.3) .. (0,0) .. controls (3.31,0.3) and (6.95,1.4) .. (10.93,3.29)   ;
        \draw    (299,140) -- (171.94,170.53) ;
        \draw [shift={(170,171)}, rotate = 346.49] [color={rgb, 255:red, 0; green, 0; blue, 0 }  ][line width=0.75]    (10.93,-3.29) .. controls (6.95,-1.4) and (3.31,-0.3) .. (0,0) .. controls (3.31,0.3) and (6.95,1.4) .. (10.93,3.29)   ;
        \draw    (405,140) -- (361.9,225.22) ;
        \draw [shift={(361,227)}, rotate = 296.83] [color={rgb, 255:red, 0; green, 0; blue, 0 }  ][line width=0.75]    (10.93,-3.29) .. controls (6.95,-1.4) and (3.31,-0.3) .. (0,0) .. controls (3.31,0.3) and (6.95,1.4) .. (10.93,3.29)   ;
        \draw    (405,140) -- (452.05,227.24) ;
        \draw [shift={(453,229)}, rotate = 241.66] [color={rgb, 255:red, 0; green, 0; blue, 0 }  ][line width=0.75]    (10.93,-3.29) .. controls (6.95,-1.4) and (3.31,-0.3) .. (0,0) .. controls (3.31,0.3) and (6.95,1.4) .. (10.93,3.29)   ;
        \draw    (152,179) -- (218.42,230.77) ;
        \draw [shift={(220,232)}, rotate = 217.93] [color={rgb, 255:red, 0; green, 0; blue, 0 }  ][line width=0.75]    (10.93,-3.29) .. controls (6.95,-1.4) and (3.31,-0.3) .. (0,0) .. controls (3.31,0.3) and (6.95,1.4) .. (10.93,3.29)   ;
        \draw  [fill={rgb, 255:red, 255; green, 255; blue, 255 }  ,fill opacity=1 ] (385,51) .. controls (385,39.95) and (393.95,31) .. (405,31) .. controls (416.05,31) and (425,39.95) .. (425,51) .. controls (425,62.05) and (416.05,71) .. (405,71) .. controls (393.95,71) and (385,62.05) .. (385,51) -- cycle ;
        \draw  [fill={rgb, 255:red, 255; green, 255; blue, 255 }  ,fill opacity=1 ] (491,140) .. controls (491,128.95) and (499.95,120) .. (511,120) .. controls (522.05,120) and (531,128.95) .. (531,140) .. controls (531,151.05) and (522.05,160) .. (511,160) .. controls (499.95,160) and (491,151.05) .. (491,140) -- cycle ;
        \draw  [fill={rgb, 255:red, 255; green, 255; blue, 255 }  ,fill opacity=1 ] (443,246) .. controls (443,234.95) and (451.95,226) .. (463,226) .. controls (474.05,226) and (483,234.95) .. (483,246) .. controls (483,257.05) and (474.05,266) .. (463,266) .. controls (451.95,266) and (443,257.05) .. (443,246) -- cycle ;
        \draw  [fill={rgb, 255:red, 255; green, 255; blue, 255 }  ,fill opacity=1 ] (385,140) .. controls (385,128.95) and (393.95,120) .. (405,120) .. controls (416.05,120) and (425,128.95) .. (425,140) .. controls (425,151.05) and (416.05,160) .. (405,160) .. controls (393.95,160) and (385,151.05) .. (385,140) -- cycle ;
        \draw  [fill={rgb, 255:red, 255; green, 255; blue, 255 }  ,fill opacity=1 ] (214,246) .. controls (214,234.95) and (222.95,226) .. (234,226) .. controls (245.05,226) and (254,234.95) .. (254,246) .. controls (254,257.05) and (245.05,266) .. (234,266) .. controls (222.95,266) and (214,257.05) .. (214,246) -- cycle ;
        \draw  [fill={rgb, 255:red, 255; green, 255; blue, 255 }  ,fill opacity=1 ] (332,246) .. controls (332,234.95) and (340.95,226) .. (352,226) .. controls (363.05,226) and (372,234.95) .. (372,246) .. controls (372,257.05) and (363.05,266) .. (352,266) .. controls (340.95,266) and (332,257.05) .. (332,246) -- cycle ;
        \draw  [fill={rgb, 255:red, 255; green, 255; blue, 255 }  ,fill opacity=1 ] (279,140) .. controls (279,128.95) and (287.95,120) .. (299,120) .. controls (310.05,120) and (319,128.95) .. (319,140) .. controls (319,151.05) and (310.05,160) .. (299,160) .. controls (287.95,160) and (279,151.05) .. (279,140) -- cycle ;
        \draw  [fill={rgb, 255:red, 255; green, 255; blue, 255 }  ,fill opacity=1 ] (132,179) .. controls (132,167.95) and (140.95,159) .. (152,159) .. controls (163.05,159) and (172,167.95) .. (172,179) .. controls (172,190.05) and (163.05,199) .. (152,199) .. controls (140.95,199) and (132,190.05) .. (132,179) -- cycle ;
        
        \draw (396,42.4) node [anchor=north west][inner sep=0.75pt]    {$X_{1}$};
        \draw (288,130.4) node [anchor=north west][inner sep=0.75pt]    {$X_{2}$};
        \draw (395,130.4) node [anchor=north west][inner sep=0.75pt]    {$X_{3}$};
        \draw (501,130.4) node [anchor=north west][inner sep=0.75pt]    {$X_{4}$};
        \draw (142,170.4) node [anchor=north west][inner sep=0.75pt]    {$X_{5}$};
        \draw (224,237.4) node [anchor=north west][inner sep=0.75pt]    {$X_{6}$};
        \draw (342,237.4) node [anchor=north west][inner sep=0.75pt]    {$X_{7}$};
        \draw (454,237.4) node [anchor=north west][inner sep=0.75pt]    {$X_{8}$};
        \end{tikzpicture}
        \caption{A Bayesian network structure over the random variables $\mathcal{X} = {X_1, \ldots, X_8}$. Directed edges encode conditional dependencies; the absence of an edge between two nodes encodes a conditional independence assumption.}
        \label{Fig:BN}
    \end{centering}
\end{figure}


Furthermore, consider the following DBG created from instantiating the BN and observing the graph underlying the resulting DBN. In order to understand how some time change $\gamma$ may influence the inherent structure of a DBG and how this will influence clustering barcodes for a DBG, we can remember that a DBG does not maintain a notion of directionality between nodes. 
Since neither $X_2 \to X_3$ nor $X_3 \to X_2$ appears in G, the edge $\{X_2, X_3\}$ does not appear in the DBG (even though $X_2$ and $X_3$ are not marginally independent, being linked through their common parent $X_1$). 
\begin{figure}[]
    \centering
    \begin{tikzpicture}[x=0.75pt,y=0.75pt,yscale=-0.9,xscale=0.9]
    
    \draw [color={rgb, 255:red, 65; green, 117; blue, 5 }  ,draw opacity=1 ]   (393.26,24.94) -- (316.18,88.48) ;
    \draw [color={rgb, 255:red, 65; green, 117; blue, 5 }  ,draw opacity=1 ]   (393.26,24.94) -- (393.26,83.39) ;
    \draw [color={rgb, 255:red, 65; green, 117; blue, 5 }  ,draw opacity=1 ]   (393.26,24.94) -- (472.89,86.78) ;
    \draw [color={rgb, 255:red, 126; green, 211; blue, 33 }  ,draw opacity=1 ]   (303.47,100.34) -- (194.19,126.6) ;
    \draw [color={rgb, 255:red, 65; green, 117; blue, 5 }  ,draw opacity=1 ]   (393.26,100.34) -- (355.99,174.04) ;
    \draw [color={rgb, 255:red, 65; green, 117; blue, 5 }  ,draw opacity=1 ]   (393.26,100.34) -- (433.92,175.73) ;
    \draw [color={rgb, 255:red, 126; green, 211; blue, 33 }  ,draw opacity=1 ]   (178.94,133.37) -- (236.55,178.27) ;
    \draw  [fill={rgb, 255:red, 255; green, 255; blue, 255 }  ,fill opacity=1 ] (376.32,24.94) .. controls (376.32,15.59) and (383.91,8) .. (393.26,8) .. controls (402.62,8) and (410.21,15.59) .. (410.21,24.94) .. controls (410.21,34.3) and (402.62,41.88) .. (393.26,41.88) .. controls (383.91,41.88) and (376.32,34.3) .. (376.32,24.94) -- cycle ;
    \draw  [fill={rgb, 255:red, 255; green, 255; blue, 255 }  ,fill opacity=1 ] (466.12,100.34) .. controls (466.12,90.98) and (473.7,83.39) .. (483.06,83.39) .. controls (492.41,83.39) and (500,90.98) .. (500,100.34) .. controls (500,109.69) and (492.41,117.28) .. (483.06,117.28) .. controls (473.7,117.28) and (466.12,109.69) .. (466.12,100.34) -- cycle ;
    \draw  [fill={rgb, 255:red, 255; green, 255; blue, 255 }  ,fill opacity=1 ] (425.45,190.13) .. controls (425.45,180.77) and (433.04,173.19) .. (442.4,173.19) .. controls (451.75,173.19) and (459.34,180.77) .. (459.34,190.13) .. controls (459.34,199.49) and (451.75,207.07) .. (442.4,207.07) .. controls (433.04,207.07) and (425.45,199.49) .. (425.45,190.13) -- cycle ;
    \draw  [fill={rgb, 255:red, 255; green, 255; blue, 255 }  ,fill opacity=1 ] (376.32,100.34) .. controls (376.32,90.98) and (383.91,83.39) .. (393.26,83.39) .. controls (402.62,83.39) and (410.21,90.98) .. (410.21,100.34) .. controls (410.21,109.69) and (402.62,117.28) .. (393.26,117.28) .. controls (383.91,117.28) and (376.32,109.69) .. (376.32,100.34) -- cycle ;
    \draw  [fill={rgb, 255:red, 255; green, 255; blue, 255 }  ,fill opacity=1 ] (231.46,190.13) .. controls (231.46,180.77) and (239.05,173.19) .. (248.41,173.19) .. controls (257.76,173.19) and (265.35,180.77) .. (265.35,190.13) .. controls (265.35,199.49) and (257.76,207.07) .. (248.41,207.07) .. controls (239.05,207.07) and (231.46,199.49) .. (231.46,190.13) -- cycle ;
    \draw  [fill={rgb, 255:red, 255; green, 255; blue, 255 }  ,fill opacity=1 ] (331.42,190.13) .. controls (331.42,180.77) and (339.01,173.19) .. (348.37,173.19) .. controls (357.72,173.19) and (365.31,180.77) .. (365.31,190.13) .. controls (365.31,199.49) and (357.72,207.07) .. (348.37,207.07) .. controls (339.01,207.07) and (331.42,199.49) .. (331.42,190.13) -- cycle ;
    \draw  [fill={rgb, 255:red, 255; green, 255; blue, 255 }  ,fill opacity=1 ] (286.53,100.34) .. controls (286.53,90.98) and (294.11,83.39) .. (303.47,83.39) .. controls (312.83,83.39) and (320.41,90.98) .. (320.41,100.34) .. controls (320.41,109.69) and (312.83,117.28) .. (303.47,117.28) .. controls (294.11,117.28) and (286.53,109.69) .. (286.53,100.34) -- cycle ;
    \draw  [fill={rgb, 255:red, 255; green, 255; blue, 255 }  ,fill opacity=1 ] (162,133.37) .. controls (162,124.02) and (169.59,116.43) .. (178.94,116.43) .. controls (188.3,116.43) and (195.88,124.02) .. (195.88,133.37) .. controls (195.88,142.73) and (188.3,150.32) .. (178.94,150.32) .. controls (169.59,150.32) and (162,142.73) .. (162,133.37) -- cycle ;
    \draw [color={rgb, 255:red, 65; green, 117; blue, 5 }  ,draw opacity=1 ]   (393.26,277.94) -- (316.18,341.48) ;
    \draw [color={rgb, 255:red, 208; green, 2; blue, 27 }  ,draw opacity=1, dashed ]   (393.26,277.94) -- (393.26,336.39) ;
    \draw [color={rgb, 255:red, 65; green, 117; blue, 5 }  ,draw opacity=1 ]   (393.26,277.94) -- (472.89,339.78) ;
    \draw [color={rgb, 255:red, 208; green, 2; blue, 27 }  ,draw opacity=1, dashed ]   (303.47,353.34) -- (194.19,379.6) ;
    \draw [color={rgb, 255:red, 65; green, 117; blue, 5 }  ,draw opacity=1 ]   (393.26,353.34) -- (355.99,427.04) ;
    \draw [color={rgb, 255:red, 65; green, 117; blue, 5 }  ,draw opacity=1 ]   (393.26,353.34) -- (433.92,428.73) ;
    \draw [color={rgb, 255:red, 65; green, 117; blue, 5 }  ,draw opacity=1 ]   (178.94,386.37) -- (236.55,431.27) ;
    \draw  [fill={rgb, 255:red, 255; green, 255; blue, 255 }  ,fill opacity=1 ] (376.32,277.94) .. controls (376.32,268.59) and (383.91,261) .. (393.26,261) .. controls (402.62,261) and (410.21,268.59) .. (410.21,277.94) .. controls (410.21,287.3) and (402.62,294.88) .. (393.26,294.88) .. controls (383.91,294.88) and (376.32,287.3) .. (376.32,277.94) -- cycle ;
    \draw  [fill={rgb, 255:red, 255; green, 255; blue, 255 }  ,fill opacity=1 ] (466.12,353.34) .. controls (466.12,343.98) and (473.7,336.39) .. (483.06,336.39) .. controls (492.41,336.39) and (500,343.98) .. (500,353.34) .. controls (500,362.69) and (492.41,370.28) .. (483.06,370.28) .. controls (473.7,370.28) and (466.12,362.69) .. (466.12,353.34) -- cycle ;
    \draw  [fill={rgb, 255:red, 255; green, 255; blue, 255 }  ,fill opacity=1 ] (425.45,443.13) .. controls (425.45,433.77) and (433.04,426.19) .. (442.4,426.19) .. controls (451.75,426.19) and (459.34,433.77) .. (459.34,443.13) .. controls (459.34,452.49) and (451.75,460.07) .. (442.4,460.07) .. controls (433.04,460.07) and (425.45,452.49) .. (425.45,443.13) -- cycle ;
    \draw  [fill={rgb, 255:red, 255; green, 255; blue, 255 }  ,fill opacity=1 ] (376.32,353.34) .. controls (376.32,343.98) and (383.91,336.39) .. (393.26,336.39) .. controls (402.62,336.39) and (410.21,343.98) .. (410.21,353.34) .. controls (410.21,362.69) and (402.62,370.28) .. (393.26,370.28) .. controls (383.91,370.28) and (376.32,362.69) .. (376.32,353.34) -- cycle ;
    \draw  [fill={rgb, 255:red, 255; green, 255; blue, 255 }  ,fill opacity=1 ] (231.46,443.13) .. controls (231.46,433.77) and (239.05,426.19) .. (248.41,426.19) .. controls (257.76,426.19) and (265.35,433.77) .. (265.35,443.13) .. controls (265.35,452.49) and (257.76,460.07) .. (248.41,460.07) .. controls (239.05,460.07) and (231.46,452.49) .. (231.46,443.13) -- cycle ;
    \draw  [fill={rgb, 255:red, 255; green, 255; blue, 255 }  ,fill opacity=1 ] (331.42,443.13) .. controls (331.42,433.77) and (339.01,426.19) .. (348.37,426.19) .. controls (357.72,426.19) and (365.31,433.77) .. (365.31,443.13) .. controls (365.31,452.49) and (357.72,460.07) .. (348.37,460.07) .. controls (339.01,460.07) and (331.42,452.49) .. (331.42,443.13) -- cycle ;
    \draw  [fill={rgb, 255:red, 255; green, 255; blue, 255 }  ,fill opacity=1 ] (286.53,353.34) .. controls (286.53,343.98) and (294.11,336.39) .. (303.47,336.39) .. controls (312.83,336.39) and (320.41,343.98) .. (320.41,353.34) .. controls (320.41,362.69) and (312.83,370.28) .. (303.47,370.28) .. controls (294.11,370.28) and (286.53,362.69) .. (286.53,353.34) -- cycle ;
    \draw  [fill={rgb, 255:red, 255; green, 255; blue, 255 }  ,fill opacity=1 ] (162,386.37) .. controls (162,377.02) and (169.59,369.43) .. (178.94,369.43) .. controls (188.3,369.43) and (195.88,377.02) .. (195.88,386.37) .. controls (195.88,395.73) and (188.3,403.32) .. (178.94,403.32) .. controls (169.59,403.32) and (162,395.73) .. (162,386.37) -- cycle ;
    \draw  [dash pattern={on 4.5pt off 4.5pt}]  (120.5,182) -- (120.5,330)(117.5,182) -- (117.5,330) ;
    \draw [shift={(119,338)}, rotate = 270] [color={rgb, 255:red, 0; green, 0; blue, 0 }  ][line width=0.75]    (10.93,-3.29) .. controls (6.95,-1.4) and (3.31,-0.3) .. (0,0) .. controls (3.31,0.3) and (6.95,1.4) .. (10.93,3.29)   ;
    
    \draw (383.96,16.34) node [anchor=north west][inner sep=0.75pt]    {$X_{1}$};
    \draw (292.47,90.89) node [anchor=north west][inner sep=0.75pt]    {$X_{2}$};
    \draw (383.11,90.89) node [anchor=north west][inner sep=0.75pt]    {$X_{3}$};
    \draw (472.9,90.89) node [anchor=north west][inner sep=0.75pt]    {$X_{4}$};
    \draw (168.79,124.77) node [anchor=north west][inner sep=0.75pt]    {$X_{5}$};
    \draw (238.25,181.53) node [anchor=north west][inner sep=0.75pt]    {$X_{6}$};
    \draw (338.21,181.53) node [anchor=north west][inner sep=0.75pt]    {$X_{7}$};
    \draw (433.09,181.53) node [anchor=north west][inner sep=0.75pt]    {$X_{8}$};
    \draw (383.96,269.34) node [anchor=north west][inner sep=0.75pt]    {$X_{1}$};
    \draw (292.47,343.89) node [anchor=north west][inner sep=0.75pt]    {$X_{2}$};
    \draw (383.11,343.89) node [anchor=north west][inner sep=0.75pt]    {$X_{3}$};
    \draw (472.9,343.89) node [anchor=north west][inner sep=0.75pt]    {$X_{4}$};
    \draw (168.79,377.77) node [anchor=north west][inner sep=0.75pt]    {$X_{5}$};
    \draw (238.25,434.53) node [anchor=north west][inner sep=0.75pt]    {$X_{6}$};
    \draw (338.21,434.53) node [anchor=north west][inner sep=0.75pt]    {$X_{7}$};
    \draw (433.09,434.53) node [anchor=north west][inner sep=0.75pt]    {$X_{8}$};
    \draw (307,36.07) node [anchor=north west][inner sep=0.75pt]    {$\delta _{X_{1} X_{2}}^{( t)}$};
    \draw (395.26,51.57) node [anchor=north west][inner sep=0.75pt]    {$\delta _{X_{1} X_{3}}^{( t)}$};
    \draw (446,36.07) node [anchor=north west][inner sep=0.75pt]    {$\delta _{X_{1} X_{4}}^{( t)}$};
    \draw (224,85.07) node [anchor=north west][inner sep=0.75pt]    {$\delta _{X_{2} X_{5}}^{( t)}$};
    \draw (180,156.07) node [anchor=north west][inner sep=0.75pt]    {$\delta _{X_{5} X_{6}}^{( t)}$};
    \draw (328,121.07) node [anchor=north west][inner sep=0.75pt]    {$\delta _{X_{3} X_{7}}^{( t)}$};
    \draw (422,121.07) node [anchor=north west][inner sep=0.75pt]    {$\delta _{X_{3} X_{8}}^{( t)}$};
    \draw (307,290.07) node [anchor=north west][inner sep=0.75pt]    {$\delta _{X_{1} X_{2}}^{( t+\gamma )}$};
    \draw (395.26,305.57) node [anchor=north west][inner sep=0.75pt]    {$\delta _{X_{1} X_{3}}^{( t+\gamma )}$};
    \draw (446,290.07) node [anchor=north west][inner sep=0.75pt]    {$\delta _{X_{1} X_{4}}^{( t+\gamma )}$};
    \draw (220,339.07) node [anchor=north west][inner sep=0.75pt]    {$\delta _{X_{2} X_{5}}^{( t+\gamma )}$};
    \draw (175,410.07) node [anchor=north west][inner sep=0.75pt]    {$\delta _{X_{5} X_{6}}^{( t+\gamma )}$};
    \draw (328,375.07) node [anchor=north west][inner sep=0.75pt]    {$\delta _{X_{3} X_{7}}^{( t+\gamma )}$};
    \draw (422,375.07) node [anchor=north west][inner sep=0.75pt]    {$\delta _{X_{3} X_{8}}^{( t+\gamma )}$};
    \draw (137,246.4) node [anchor=north west][inner sep=0.75pt]    {$t+\gamma $};
    \end{tikzpicture}
    \caption{Two snapshots of the DBG $\mathscr{G}_{\mathcal{X}}$ derived from the BN in Figure~\ref{Fig:BN}.
\emph{Top:} at time $t$, all edge strengths $\delta^{(t)}_{X_iX_j}$ exceed the threshold $\eta$,
so the graph is fully connected.
\emph{Bottom:} at time $t+\gamma$, the edge strengths $\delta^{(t+\gamma)}_{X_1X_3}$, $\delta^{(t+\gamma)}_{X_2X_5}$ (shown in red, dashed)
have dropped below $\eta$, disconnecting the graph.}

\end{figure}


After some time $\gamma$, we arrive at a different DBG. When we consider the formigram of a DBG at each of these particular times we get two different partitions of the nodes of our graphs, $\pi_0 (\mathscr{G}_\mathcal{X})(t)=\{ \{ X_1, X_2, X_3, X_4, X_5, X_6, X_7, X_8 \} \}$, $\pi_0(\mathscr{G}_\mathcal{X}(t + \gamma)) = \{ \{ X_1, X_2, X_4 \}, \{ X_5, X_6 \}, \{  X_3, X_7, X_8\} \}$. At time $t$, we see that our DBG is fully connected, all points have at least one path connecting each other. However, at time $t+ \gamma$, we see that there are three separate partitions of the DBG. From the path component
functor, we arrive at two different clustering sets of clustering families and each particular time. We can see that at time $t$ we get
\begin{align}
    \mathbf{X}_1^{\pi_0}(t) = \{ X_1, X_2, X_3, X_4, X_5, X_6, X_7, X_8 \}.
\end{align}
And at time $t + \gamma$ we get the following
\begin{align}
    \mathbf{X}_1^{\pi_0}(t+\gamma) &= \{ X_1, X_2, X_4 \} \\
    \mathbf{X}_2^{\pi_0}(t+\gamma) &= \{ X_5, X_6\}\\
    \mathbf{X}_3^{\pi_0}(t+\gamma) &= \{ X_3, X_7, X_8\}
\end{align}

Now, if we use our BN to recover the directionality of each of the edges, we see that we now have three separate sub-networks, shown in Figure \ref{fig:three-components}:
the sub-network on $\{X_1, X_2, X_4\}$, with directed edges inherited from~$G$;
the sub-network on $\{X_5, X_6\}$; and
the sub-network on $\{X_3, X_7, X_8\}$.

\begin{figure}[]
    \centering
    \begin{tikzpicture}[x=0.75pt,y=0.75pt,yscale=-0.85,xscale=0.85]
    
    \draw [color={rgb, 255:red, 0; green, 0; blue, 0 }  ,draw opacity=1 ]   (393.26,21.81) -- (317.72,84.07) ;
    \draw [shift={(316.18,85.35)}, rotate = 320.51] [color={rgb, 255:red, 0; green, 0; blue, 0 }  ,draw opacity=1 ][line width=0.75]    (10.93,-3.29) .. controls (6.95,-1.4) and (3.31,-0.3) .. (0,0) .. controls (3.31,0.3) and (6.95,1.4) .. (10.93,3.29)   ;
    \draw [color={rgb, 255:red, 0; green, 0; blue, 0 }  ,draw opacity=1 ]   (393.26,21.81) -- (471.31,82.42) ;
    \draw [shift={(472.89,83.65)}, rotate = 217.83] [color={rgb, 255:red, 0; green, 0; blue, 0 }  ,draw opacity=1 ][line width=0.75]    (10.93,-3.29) .. controls (6.95,-1.4) and (3.31,-0.3) .. (0,0) .. controls (3.31,0.3) and (6.95,1.4) .. (10.93,3.29)   ;
    \draw [color={rgb, 255:red, 0; green, 0; blue, 0 }  ,draw opacity=1 ]   (393.26,97.21) -- (356.89,169.12) ;
    \draw [shift={(355.99,170.9)}, rotate = 296.83] [color={rgb, 255:red, 0; green, 0; blue, 0 }  ,draw opacity=1 ][line width=0.75]    (10.93,-3.29) .. controls (6.95,-1.4) and (3.31,-0.3) .. (0,0) .. controls (3.31,0.3) and (6.95,1.4) .. (10.93,3.29)   ;
    \draw [color={rgb, 255:red, 0; green, 0; blue, 0 }  ,draw opacity=1 ]   (393.26,97.21) -- (432.98,170.84) ;
    \draw [shift={(433.92,172.6)}, rotate = 241.66] [color={rgb, 255:red, 0; green, 0; blue, 0 }  ,draw opacity=1 ][line width=0.75]    (10.93,-3.29) .. controls (6.95,-1.4) and (3.31,-0.3) .. (0,0) .. controls (3.31,0.3) and (6.95,1.4) .. (10.93,3.29)   ;
    \draw [color={rgb, 255:red, 0; green, 0; blue, 0 }  ,draw opacity=1 ]   (178.94,130.24) -- (234.97,173.91) ;
    \draw [shift={(236.55,175.14)}, rotate = 217.93] [color={rgb, 255:red, 0; green, 0; blue, 0 }  ,draw opacity=1 ][line width=0.75]    (10.93,-3.29) .. controls (6.95,-1.4) and (3.31,-0.3) .. (0,0) .. controls (3.31,0.3) and (6.95,1.4) .. (10.93,3.29)   ;
    \draw  [fill={rgb, 255:red, 255; green, 255; blue, 255 }  ,fill opacity=1 ] (376.32,21.81) .. controls (376.32,12.46) and (383.91,4.87) .. (393.26,4.87) .. controls (402.62,4.87) and (410.21,12.46) .. (410.21,21.81) .. controls (410.21,31.17) and (402.62,38.75) .. (393.26,38.75) .. controls (383.91,38.75) and (376.32,31.17) .. (376.32,21.81) -- cycle ;
    \draw  [fill={rgb, 255:red, 255; green, 255; blue, 255 }  ,fill opacity=1 ] (466.12,97.21) .. controls (466.12,87.85) and (473.7,80.26) .. (483.06,80.26) .. controls (492.41,80.26) and (500,87.85) .. (500,97.21) .. controls (500,106.56) and (492.41,114.15) .. (483.06,114.15) .. controls (473.7,114.15) and (466.12,106.56) .. (466.12,97.21) -- cycle ;
    \draw  [fill={rgb, 255:red, 255; green, 255; blue, 255 }  ,fill opacity=1 ] (425.45,187) .. controls (425.45,177.64) and (433.04,170.06) .. (442.4,170.06) .. controls (451.75,170.06) and (459.34,177.64) .. (459.34,187) .. controls (459.34,196.36) and (451.75,203.94) .. (442.4,203.94) .. controls (433.04,203.94) and (425.45,196.36) .. (425.45,187) -- cycle ;
    \draw  [fill={rgb, 255:red, 255; green, 255; blue, 255 }  ,fill opacity=1 ] (376.32,97.21) .. controls (376.32,87.85) and (383.91,80.26) .. (393.26,80.26) .. controls (402.62,80.26) and (410.21,87.85) .. (410.21,97.21) .. controls (410.21,106.56) and (402.62,114.15) .. (393.26,114.15) .. controls (383.91,114.15) and (376.32,106.56) .. (376.32,97.21) -- cycle ;
    \draw  [fill={rgb, 255:red, 255; green, 255; blue, 255 }  ,fill opacity=1 ] (231.46,187) .. controls (231.46,177.64) and (239.05,170.06) .. (248.41,170.06) .. controls (257.76,170.06) and (265.35,177.64) .. (265.35,187) .. controls (265.35,196.36) and (257.76,203.94) .. (248.41,203.94) .. controls (239.05,203.94) and (231.46,196.36) .. (231.46,187) -- cycle ;
    \draw  [fill={rgb, 255:red, 255; green, 255; blue, 255 }  ,fill opacity=1 ] (331.42,187) .. controls (331.42,177.64) and (339.01,170.06) .. (348.37,170.06) .. controls (357.72,170.06) and (365.31,177.64) .. (365.31,187) .. controls (365.31,196.36) and (357.72,203.94) .. (348.37,203.94) .. controls (339.01,203.94) and (331.42,196.36) .. (331.42,187) -- cycle ;
    \draw  [fill={rgb, 255:red, 255; green, 255; blue, 255 }  ,fill opacity=1 ] (286.53,97.21) .. controls (286.53,87.85) and (294.11,80.26) .. (303.47,80.26) .. controls (312.83,80.26) and (320.41,87.85) .. (320.41,97.21) .. controls (320.41,106.56) and (312.83,114.15) .. (303.47,114.15) .. controls (294.11,114.15) and (286.53,106.56) .. (286.53,97.21) -- cycle ;
    \draw  [fill={rgb, 255:red, 255; green, 255; blue, 255 }  ,fill opacity=1 ] (162,130.24) .. controls (162,120.89) and (169.59,113.3) .. (178.94,113.3) .. controls (188.3,113.3) and (195.88,120.89) .. (195.88,130.24) .. controls (195.88,139.6) and (188.3,147.19) .. (178.94,147.19) .. controls (169.59,147.19) and (162,139.6) .. (162,130.24) -- cycle ;
    
    \draw (383.96,13.21) node [anchor=north west][inner sep=0.75pt]    {$X_{1}$};
    \draw (292.47,87.76) node [anchor=north west][inner sep=0.75pt]    {$X_{2}$};
    \draw (383.11,87.76) node [anchor=north west][inner sep=0.75pt]    {$X_{3}$};
    \draw (472.9,87.76) node [anchor=north west][inner sep=0.75pt]    {$X_{4}$};
    \draw (168.79,121.64) node [anchor=north west][inner sep=0.75pt]    {$X_{5}$};
    \draw (238.25,178.4) node [anchor=north west][inner sep=0.75pt]    {$X_{6}$};
    \draw (338.21,178.4) node [anchor=north west][inner sep=0.75pt]    {$X_{7}$};
    \draw (433.09,178.4) node [anchor=north west][inner sep=0.75pt]    {$X_{8}$};

    \end{tikzpicture}
    \caption{The three connected components of $\mathscr{G}_{\mathcal{X}}$ at time $t + \gamma$,
obtained by removing the edges $\{X_1, X_3\}$ and $\{X_2, X_5\}$ from the fully connected graph.
The clusters are $\{X_1, X_2, X_4\}$, $\{X_5, X_6\}$, and $\{X_3, X_7, X_8\}$.
Directed edges are recovered from the underlying BN structure.}

    \label{fig:three-components}
\end{figure}

In the barcode of this particular DBG, we are able to obtain a unique kind of clustering based upon the edge strength assigned to our DBG.
At time $t$, the DBG is fully connected, so $\pi_0(\mathscr{G}_\mathcal{X})(t)$ consists of a single block $\{X_1,...,X_8\}$. This gives one bar in $\textnormal{dgm}(\mathscr{G}_\mathcal{X})$ alive at time $t$. After the disbanding event at time $t+\gamma$, three new bars are born in $\textnormal{dgm}(\mathscr{G}_\mathcal{X})$, corresponding to the clusters $\{X_1,X_2,X_4\}$, $\{X_5,X_6\}$, and $\{X_3,X_7,X_8\}$.

\section{The Utility of Partitioning a Dynamic Bayesian Network}

As previously discussed, there are several ways that we can dissect BNs. Previously, we discussed the utility of the DCMAP algorithm at reducing computational complexity in inference on a Bayesian network. Other algorithms, such as the jointree clustering described in \cite{LS-1988}, are able to effectively reduce computational complexity for inference in BNs. Furthermore, clustering can also be used to reduce space complexity in a BN by limiting the size of individual clusters within a BN, see \cite{Triplet}.

These methods are in slight contrast to ours. Our method utilizes the inherent topological structure of DBNs dependent on some graphical edge strength in order to provide time-dependent clustering. This method is different from current clustering methods in statistics that often rely on graph theory and optimization in order to arrive at clusters tailored to a specific purpose. A key aspect of our clustering algorithm is that we have distinct clusters at each particular time step. This is a key difference to methods such as the \textit{DCMAP algorithm} or the \textit{jointree algorithm}. 

\begin{note}[Insights from SDPH on DBNs]
    The practical insights of this paper are as follows:
    \begin{enumerate}[(i)]
        \item understand a DBN in terms of its inherent and useful graphical structure
        \item leverage the ambiguity of edge strength to allow for multiple different kinds of clustering,
        \item utilize a DBN's graphical structure to perform stable distance persistent homology via Zig-zag persistence,
        \item and use stability results in order to maintain legitimacy in signature output for real-world data.
    \end{enumerate}
\end{note}

\subsection{The Importance of Remembering a DBNs Graphical Structure}

One of the key advantages of BNs and DBNs is that they both maintain a graphical structure. There are two equally relevant and important perspectives on the underlying graph $G$ of a BN.
\begin{enumerate}[(i)]
    \item One perspective claims that the underlying graphical structure lends itself to a skeleton that encodes a compact factorization of the related joint distribution of the graph.
    \item Another perspective is that the underlying graphical structure is a compact way of being able to understand the conditional independence structure for a particular distribution, see \cite{Koller-Friedman}. 
\end{enumerate}
The difference between clustering such as the \textit{DCMAP algorithm} and the \textit{jointree algorithm} is that clustering is prioritized as a method to further factorize the underlying joint distribution of a BN, something more akin to the first perspective; while SDPH attempts to read the evolving conditional independence structure of a DBN and its associated distribution across time, something more akin to the second perspective. This distinction is important as it must be acknowledged that topological data analysis utilizes different schemes in order to obtain its clustering structure.

\subsection{Understanding Edge Strength as A Design Choice}

One of the most important aspects of using SDPH on a DBN is the generality associated with the edge strength function used to obtain stable signatures. In particular, we used $\delta_{XY}^{(t)}$ to represent the edge strength from $X$ to $Y$. For example, we used the total variation distance between each of these nodes along with the upper diameter of sub-stochastic matrices $P_{i \mid \textbf{x}}$ to encode an edge weight that represented the dependence that the child node $Y$ had on the parent node $X$. However, this is just one particular choice of edge strength. Within our model, as long as the DBG that is induced by some edge strength $\delta_{XY}^{(t)}$ satisfies the axioms for a DBG (self-loops, tameness, lifespan of vertices, and comparability), it is a viable candidate for SDPH. While these measures are not explored within our paper, it helps to consider the wide range for defining edge strength functions. An edge strength function could use any one of the following distances:
\begin{enumerate}[(i)]
    \item \textbf{Kullback--Leibler Distance:}
    This represents the distance between any two given statistical objects. Moreover, it measures the divergence between probability distributions:
    
    \begin{equation}
        D(P \| Q)
        =
        \int_{-\infty}^{\infty}
        p(x)\log\left(\frac{p(x)}{q(x)}\right)\,dx.
    \end{equation}

    However, it is also important to note that this distance is not symmetric and does require an algorithm for a symmetrization process \cite{Borges2020CRATES}.

    \item \textbf{Hellinger Distance:}
    This distance is essentially the closest analogue to the Euclidean distance for two discrete distributions $P$ and $Q$. The distance is given by the formula
    
    \begin{equation}
        HD(P,Q)
        =
        \frac{1}{\sqrt{2}}
        \sqrt{
        \sum_{i=1}^k
        \left(
        \sqrt{p(i)}-\sqrt{q(i)}
        \right)^2
        }.
    \end{equation}

    This distance is fairly common in statistical models and has uses in clustering statistical models \cite{Borges2020CRATES}.
    
    \item \textbf{Bhattacharyya Distance:}
    This distance is used heavily in data mining. In addition to this, it is commonly used for clustering purposes \cite{Borges2020CRATES}.
    \begin{align}
        BD(P,Q) = - \log \left ( \sum_{i =1 }^k \sqrt{p(i)q(i)} \right ).
    \end{align}
\end{enumerate}

\subsection{The Significance of Clustering Barcodes and Stability}

The key reason for performing SDPH on a DBN is the ability to capture clustering signatures within our DBN that evolve across time. Using SDPH, we are able to capture essential moments within our DBN where clustering structure shifts. These points are directly related to the \textit{critical points} associated with our DBG. These critical points lend themselves to deeper insights such as \textit{merging events} and \textit{disbanding events}, where, for instance, merging events represent moments in our DBN where clustering families $\mathbf{X}_k(t^* - \varepsilon)$ and $\mathbf{X}_j(t^* - \varepsilon)$ merge together to form a larger cluster $\mathbf{X}_i(t^*)$. This dynamic behavior can be clearly seen when observing the barcode of a DBG

\begin{figure}[H]
    \centering
    \begin{tikzpicture}[x=0.75pt,y=0.75pt,yscale=-1,xscale=1]
        
        \draw    (93.46,33.8) -- (52.09,108.73) ;
        \draw    (44.36,46.33) -- (44.36,106.95) ;
        \draw    (142.56,46.33) -- (142.56,106.95) ;
        \draw    (44.36,33.8) -- (91.84,75.23) -- (131.93,113.55) ;
        \draw  [fill={rgb, 255:red, 128; green, 128; blue, 128 }  ,fill opacity=1 ] (31.83,33.8) .. controls (31.83,26.88) and (37.44,21.28) .. (44.36,21.28) .. controls (51.28,21.28) and (56.88,26.88) .. (56.88,33.8) .. controls (56.88,40.72) and (51.28,46.33) .. (44.36,46.33) .. controls (37.44,46.33) and (31.83,40.72) .. (31.83,33.8) -- cycle ;
        \draw  [fill={rgb, 255:red, 128; green, 128; blue, 128 }  ,fill opacity=1 ] (31.83,119.47) .. controls (31.83,112.56) and (37.44,106.95) .. (44.36,106.95) .. controls (51.28,106.95) and (56.88,112.56) .. (56.88,119.47) .. controls (56.88,126.39) and (51.28,132) .. (44.36,132) .. controls (37.44,132) and (31.83,126.39) .. (31.83,119.47) -- cycle ;
        \draw  [fill={rgb, 255:red, 128; green, 128; blue, 128 }  ,fill opacity=1 ] (130.03,119.47) .. controls (130.03,112.56) and (135.64,106.95) .. (142.56,106.95) .. controls (149.47,106.95) and (155.08,112.56) .. (155.08,119.47) .. controls (155.08,126.39) and (149.47,132) .. (142.56,132) .. controls (135.64,132) and (130.03,126.39) .. (130.03,119.47) -- cycle ;
        \draw  [fill={rgb, 255:red, 128; green, 128; blue, 128 }  ,fill opacity=1 ] (130.03,33.8) .. controls (130.03,26.88) and (135.64,21.28) .. (142.56,21.28) .. controls (149.47,21.28) and (155.08,26.88) .. (155.08,33.8) .. controls (155.08,40.72) and (149.47,46.33) .. (142.56,46.33) .. controls (135.64,46.33) and (130.03,40.72) .. (130.03,33.8) -- cycle ;
        \draw  [fill={rgb, 255:red, 128; green, 128; blue, 128 }  ,fill opacity=1 ] (80.93,33.8) .. controls (80.93,26.88) and (86.54,21.28) .. (93.46,21.28) .. controls (100.37,21.28) and (105.98,26.88) .. (105.98,33.8) .. controls (105.98,40.72) and (100.37,46.33) .. (93.46,46.33) .. controls (86.54,46.33) and (80.93,40.72) .. (80.93,33.8) -- cycle ;
        \draw    (272.46,33.8) -- (231.09,108.73) ;
        \draw    (223.36,46.33) -- (223.36,106.95) ;
        \draw    (321.56,46.33) -- (321.56,106.95) ;
        \draw  [fill={rgb, 255:red, 128; green, 128; blue, 128 }  ,fill opacity=1 ] (210.83,33.8) .. controls (210.83,26.88) and (216.44,21.28) .. (223.36,21.28) .. controls (230.28,21.28) and (235.88,26.88) .. (235.88,33.8) .. controls (235.88,40.72) and (230.28,46.33) .. (223.36,46.33) .. controls (216.44,46.33) and (210.83,40.72) .. (210.83,33.8) -- cycle ;
        \draw  [fill={rgb, 255:red, 128; green, 128; blue, 128 }  ,fill opacity=1 ] (210.83,119.47) .. controls (210.83,112.56) and (216.44,106.95) .. (223.36,106.95) .. controls (230.28,106.95) and (235.88,112.56) .. (235.88,119.47) .. controls (235.88,126.39) and (230.28,132) .. (223.36,132) .. controls (216.44,132) and (210.83,126.39) .. (210.83,119.47) -- cycle ;
        \draw  [fill={rgb, 255:red, 128; green, 128; blue, 128 }  ,fill opacity=1 ] (309.03,119.47) .. controls (309.03,112.56) and (314.64,106.95) .. (321.56,106.95) .. controls (328.47,106.95) and (334.08,112.56) .. (334.08,119.47) .. controls (334.08,126.39) and (328.47,132) .. (321.56,132) .. controls (314.64,132) and (309.03,126.39) .. (309.03,119.47) -- cycle ;
        \draw  [fill={rgb, 255:red, 128; green, 128; blue, 128 }  ,fill opacity=1 ] (309.03,33.8) .. controls (309.03,26.88) and (314.64,21.28) .. (321.56,21.28) .. controls (328.47,21.28) and (334.08,26.88) .. (334.08,33.8) .. controls (334.08,40.72) and (328.47,46.33) .. (321.56,46.33) .. controls (314.64,46.33) and (309.03,40.72) .. (309.03,33.8) -- cycle ;
        \draw  [fill={rgb, 255:red, 128; green, 128; blue, 128 }  ,fill opacity=1 ] (259.93,33.8) .. controls (259.93,26.88) and (265.54,21.28) .. (272.46,21.28) .. controls (279.37,21.28) and (284.98,26.88) .. (284.98,33.8) .. controls (284.98,40.72) and (279.37,46.33) .. (272.46,46.33) .. controls (265.54,46.33) and (259.93,40.72) .. (259.93,33.8) -- cycle ;
        \draw    (235.88,33.8) -- (259.93,33.8) ;
        \draw    (163,75.5) .. controls (164.67,73.83) and (166.33,73.83) .. (168,75.5) .. controls (169.67,77.17) and (171.33,77.17) .. (173,75.5) .. controls (174.67,73.83) and (176.33,73.83) .. (178,75.5) .. controls (179.67,77.17) and (181.33,77.17) .. (183,75.5) .. controls (184.67,73.83) and (186.33,73.83) .. (188,75.5) .. controls (189.67,77.17) and (191.33,77.17) .. (193,75.5) .. controls (194.67,73.83) and (196.33,73.83) .. (198,75.5) -- (201,75.5)(163,78.5) .. controls (164.67,76.83) and (166.33,76.83) .. (168,78.5) .. controls (169.67,80.17) and (171.33,80.17) .. (173,78.5) .. controls (174.67,76.83) and (176.33,76.83) .. (178,78.5) .. controls (179.67,80.17) and (181.33,80.17) .. (183,78.5) .. controls (184.67,76.83) and (186.33,76.83) .. (188,78.5) .. controls (189.67,80.17) and (191.33,80.17) .. (193,78.5) .. controls (194.67,76.83) and (196.33,76.83) .. (198,78.5) -- (201,78.5) ;
        \draw [shift={(210,77)}, rotate = 180] [fill={rgb, 255:red, 0; green, 0; blue, 0 }  ][line width=0.08]  [draw opacity=0] (8.93,-4.29) -- (0,0) -- (8.93,4.29) -- cycle    ;
        \draw [color={rgb, 255:red, 208; green, 2; blue, 27 }  ,draw opacity=1 ] [dash pattern={on 4.5pt off 4.5pt}]  (365,75.5) -- (449,75.5)(365,78.5) -- (449,78.5) ;
        \draw [shift={(457,77)}, rotate = 180] [color={rgb, 255:red, 208; green, 2; blue, 27 }  ,draw opacity=1 ][line width=0.75]    (10.93,-3.29) .. controls (6.95,-1.4) and (3.31,-0.3) .. (0,0) .. controls (3.31,0.3) and (6.95,1.4) .. (10.93,3.29)   ;
        \draw [line width=1.5]    (497,30) -- (497,130) ;
        \draw [line width=1.5]    (497,130) -- (619,130) ;
        \draw  [fill={rgb, 255:red, 74; green, 144; blue, 226 }  ,fill opacity=1 ] (497,77) -- (568.5,77) -- (568.5,84) -- (497,84) -- cycle ;
        \draw  [fill={rgb, 255:red, 74; green, 144; blue, 226 }  ,fill opacity=1 ] (523.5,59.5) -- (569,59.5) -- (569,67) -- (523.5,67) -- cycle ;
        \draw  [fill={rgb, 255:red, 74; green, 144; blue, 226 }  ,fill opacity=1 ] (523.5,94.5) -- (569,94.5) -- (569,102) -- (523.5,102) -- cycle ;
        \draw  [fill={rgb, 255:red, 74; green, 144; blue, 226 }  ,fill opacity=1 ] (570.5,85.5) -- (616,85.5) -- (616,93) -- (570.5,93) -- cycle ;
        \draw  [fill={rgb, 255:red, 74; green, 144; blue, 226 }  ,fill opacity=1 ] (570.5,68.5) -- (616,68.5) -- (616,76) -- (570.5,76) -- cycle ;
        
        \draw (82.73,24.3) node [anchor=north west][inner sep=0.75pt]   [align=left] {$\displaystyle X_{2}$};
        \draw (37.98,24.05) node [anchor=north west][inner sep=0.75pt]   [align=left] {$ $};
        \draw (132.93,24.55) node [anchor=north west][inner sep=0.75pt]   [align=left] {$\displaystyle X_{3}$};
        \draw (34.24,22.7) node [anchor=north west][inner sep=0.75pt]   [align=left] {$\displaystyle X_{1}$};
        \draw (34.24,109.97) node [anchor=north west][inner sep=0.75pt]   [align=left] {$\displaystyle X_{4}$};
        \draw (132.18,109.97) node [anchor=north west][inner sep=0.75pt]   [align=left] {$\displaystyle X_{5}$};
        \draw (261.73,24.3) node [anchor=north west][inner sep=0.75pt]   [align=left] {$\displaystyle X_{2}$};
        \draw (216.98,24.05) node [anchor=north west][inner sep=0.75pt]   [align=left] {$ $};
        \draw (311.93,24.55) node [anchor=north west][inner sep=0.75pt]   [align=left] {$\displaystyle X_{3}$};
        \draw (213.24,22.7) node [anchor=north west][inner sep=0.75pt]   [align=left] {$\displaystyle X_{1}$};
        \draw (213.24,109.97) node [anchor=north west][inner sep=0.75pt]   [align=left] {$\displaystyle X_{4}$};
        \draw (311.18,109.97) node [anchor=north west][inner sep=0.75pt]   [align=left] {$\displaystyle X_{5}$};
        \draw (176,55.4) node [anchor=north west][inner sep=0.75pt]    {$\Delta t$};
        \draw (88,137.4) node [anchor=north west][inner sep=0.75pt]    {$t_{0}$};
        \draw (270,137.4) node [anchor=north west][inner sep=0.75pt]    {$t_{1}$};
        \draw (490,137.4) node [anchor=north west][inner sep=0.75pt]    {$t_{0}$};
        \draw (611,137.4) node [anchor=north west][inner sep=0.75pt]    {$t_{1}$};
        \draw (384,46.4) node [anchor=north west][inner sep=0.75pt]    {$\pi _{0}(\mathscr{G}_{\mathcal{X}})$};
        \draw (543.5,21.9) node [anchor=north west][inner sep=0.75pt]    {$H_{0}$};
        \draw (172.5,141.9) node [anchor=north west][inner sep=0.75pt]    {$\mathscr{G}_{\mathcal{X}}$};
    \end{tikzpicture}
    \caption{Two snapshots of a DBG (left) at times $t_0$ and $t_1$, and the resulting $H_0$ barcode (right). At $t_0$, the graph is fully connected; by $t_1$, it has split into two clusters, creating a new bar in the barcode.}
    \label{fig:placeholder}
\end{figure}
In Figure \ref{fig:placeholder}, our clustering barcode $H_0$ indicates the persistence of specific clusters between times $t_0$ and $t_1$. As we can see, at time $t_0$, only one cluster exists within the entire DBG. However, after some time, we end up with two different clusters. The barcodes allow us to have clustering information stored continuously across time. In addition to this, SDPH allows for the creation of stable barcodes. We can remember the stability result for DBGs, where a particular DBG is mapped into a formigram in a way that maintains stability. Or, in other words, our formigrams are resistant to noise. This detail is essential in situations where data is entirely precise. And in  addition to this, we can remember that $d_B(\textnormal{dgm}(S_\varepsilon \pi_0(\mathscr{G}_\mathcal X)), \textnormal{dgm}(\pi_0(\mathscr{G}_\mathcal X))) \leq \varepsilon$ for any $\varepsilon > 0$ and a particular $\mathscr{G}_\mathcal X$. Since the \textit{time-interlevel smoothing} of our DBG removes large fluctuations and instability within our DBG $\mathscr{G}_\mathcal X$, it is important to note that the barcodes between the smoothed and the unsmoothed version remain $\varepsilon$ close.

\section*{Acknowledgments}

The first-named author gladly acknowledges the College of Arts and Sciences at Loyola University Chicago and the Department of Mathematics and Statistics for their support in funding and providing an intellectually stimulating environment for research. More specifically, the first-named author is grateful for funding from the Rataj Family Scholarship and the Mulcahy Scholarship.
The second-named author gratefully acknowledges support from the College of Arts and Sciences at Loyola University Chicago for summer research funding, as well as the AMS and Simons Foundation for support through an AMS-Simons PUI grant.
\vspace{-10pt}

\bibliography{refs}

\Addresses

\end{document}